\title{Topological and Ergodic properties of Symmetric sub-shifts}
\author{Rafael Alcaraz Barrera}
\address{School of Mathematics\\
The University of Manchester\\
Oxford Road, Man\-ches\-ter, M13 9PL, UK}
\email{rafael.alcarazbarrera@manchester.ac.uk} 
\date{\today}
\subjclass[2010]{Primary 37B10; Secondary 28D05, 37C70, 68R15.}
\thanks{This paper is part of the author's doctoral dissertation. Research was supported by CONACyT scholarship for Doctoral Students no. 213600.}
\keywords{Open dynamical systems, symbolic dynamics, doubling map, $\beta$-expansions, intrinsic ergodicity.}
\begin{document}

\begin{abstract}
The family of symmetric one sided sub-shifts in two symbols given by a sequence $a$ is studied. We analyse some of their topological properties such as transitivity, the specification property and intrinsic ergodicity. It is shown that almost every member of this family admits only one measure of maximal entropy. It is shown that the same results hold for attractors of the family of open dynamical systems arising from the doubling map with a centred symmetric hole depending on one parameter, and for the set of points that have unique $\beta$-expansion for $\beta \in (\varphi,2)$ where $\varphi$ is the Golden Ratio.
\end{abstract}
\maketitle

\section{Introduction}
\label{intro}

\noindent This paper presents studies of properties of a particular class of dynamical systems, named \textit{dynamical systems with holes}. Given a discrete dynamical system, i.e. a pair $(X,f)$, where $X$ is a compact metric space and $f:X \to X$ is a continuous transformation, with positive topological entropy and an open subset $U$, named \textit{hole}, we can consider the \textit{$U$-exceptional set} defined by $$X_U = \{ x \in X \mid f^n(x) \notin U \hbox{\rm{ for every }} n \geq 0 \hbox{\rm{ or }} n \in \mathbb{Z}\}.$$ Note that $X_U$ is compact and $f$-invariant, therefore, $(X_U, f_U)$ where $f_U = f\mid_{X_U}$ is a dynamical system on its own right. Dynamical systems of this form are called \textit{open dynamical systems, exclusion maps, maps with holes} or \textit{maps with gaps or interval exclusion systems} if $X$ is an interval \cite{bundfuss}.

\vspace{1em}The study of this class of systems have been awaken interest (see e.g \cite{dettman} and references therein), since they offer interesting questions about their properties, such as:
\begin{enumerate}
\item Is $X_U$ empty, countable or uncountable? Is $X_U$ a Cantor set of positive Hausdorff dimension?
\item Is $f_U$ transitive? Is $(X_U, f\mid_U)$ intrinsically ergodic? \cite{sidorov3}.
\end{enumerate}

\vspace{1em}Recently, there have been advances in solving Question (1) in the one dimensional case \cite{sidorov2, sidorov6, sidorov5}. Also, Bundfuss et al. in \cite{bundfuss} have asked if it is possible to classify the dynamics of interval exclusion shifts via the arithmetical properties of the boundary points of the hole. 

\vspace{1em}The idea behind this paper is to understand open dynamical systems using tools from symbolic dynamics and answer Question (2) and the mentioned question stated in \cite{bundfuss} for the doubling map with a centred, symmetric hole. Moreover, as it was stated in \cite{sidorov3}, studying the doubling map with a centred, symmetric hole is essentially the same as studying the set of points with a unique $\beta$-expansion when $\beta \in (1,2)$ (this is formalised in Section \ref{exclusion}). This fact help us to mix tools from the general theory of open dynamical systems and classical results from $\beta$-expansions. 

\vspace{1em}In Theorems \ref{transitivity} and \ref{specification} we show under whose conditions a sub-shift is transitive or has the specification property. These results allow us to show the main result of our investigation; the intrinsic ergodicity of almost every considered exclusion system (Theorem \ref{almostevery}), using the classical results stated in \cite{bowen1, parry}, as it was conjectured in \cite{sidorov3}. Also, we show in Section \ref{resultstransitivityandentropy} that the function which associate to each defining sequence the topological entropy of the associated sub-shift is a devil's staircase, which is related to the escape rate formula studied in \cite[Section 5.2]{keller}. It is worth to mention that the fractal properties of the escape rate formula are studied in \cite{demers, dettman}.

\vspace{1em} The structure of the paper is as follows. Section \ref{basic} contains the necessary background for the reader's convenience. In Section \ref{exclusion} we state well known facts about the structure of the exclusion set for the family of symmetric exclusion shifts for the doubling map. Furthermore, we show the relation of this family and the set of points who have unique $\beta$-expansion between $(0, \frac{1}{\beta-1})$ for $\beta \in [\varphi, 2)$, where $\varphi = \frac{1+\sqrt{5}}{2}$. In Section \ref{resultstransitivityandentropy} we present a way to decompose the space of parameters in terms of admissible sequences of consecutive symbols and we show the cases where the symmetric sub-shifts are transitive, and we show the behaviour of the family of symmetric shifts in terms of their topological entropy. In Section \ref{specificationsection}, we show the cases when a symmetric sub-shift satisfies the specification property. In Section \ref{resultsergodicity} we show that there exist a unique measure of maximal entropy for almost every member of this family. Finally, in Section \ref{despues}, we mention some of the obstructions to show the intrinsic ergodicity of every member of the family and some questions are posed. 

\section{Background}
\label{basic}
\noindent In this section we recall some basic tools and definitions. We refer the reader to \cite{lindmarcus} for the proofs of the statements given in this section. The work done so far is related only to shift spaces given by the alphabet $\{0,1\}$. We call its elements \textit{symbols}. A \textit{word} is a finite sequence of symbols $\omega = w_1,\ldots w_n$ where $w_i \in \{0,1\}$. Given a word $\omega$ we define its \textit{length}, $\ell(\omega)$, as the number of symbols it contains. Given two finite words $\omega = w_1,\ldots w_n$ and $\nu = u_1 \ldots u_m$ we write $\omega\nu$ to denote their \textit{concatenation}, i.e. $\omega\nu = w_1,\ldots w_n u_1 \ldots u_m$. If $\omega$ is concatenated to itself $n$ times it will be denoted by $\omega^n$. 

\vspace{1em}Let $\Sigma_2 = \mathop{\prod}\limits_{n=1}^{\infty}\{0, 1\}$ with the distance given by: $$d(x,y) = \left\{
\begin{array}{clrr}      
2^{-j} & \hbox{\rm{ if }}  x \neq y;& \hbox{\rm{where }} j = \min\{i \mid x_i \neq y_i \}\\
0 & \hbox{\rm{ otherwise.}}&\\
\end{array}
\right.$$ 

Let $\pi: \Sigma_2 \to [0,1]$ be the projection map given by $$\pi((x_i)_{i=1}^{\infty}) = \mathop{\sum}\limits_{i=1}^{\infty} \dfrac{x_i}{2^i},$$ and $\sigma: \Sigma_2 \to \Sigma_2$ the one sided shift map. As well known, $\pi$ is injective except for the set of sequences ending with $0^{\infty}$ or $1^{\infty}$, which are identified in our setting.   
 
\vspace{1em}Let $\mathcal{F}$ be a set of words: We define $$\Sigma_{\mathcal{F}} = \{x \in \Sigma_2 \mid u \hbox{\rm{ is not contained in }} x \hbox{\rm{ for any word }} u \in \mathcal{F}\}.$$ The dynamical system given by $(\Sigma_{\mathcal{F}}, \sigma\mid_{\Sigma_{\mathcal{F}}})$ is called a \textit{sub-shift}. When $\mathcal{F}$ is a finite set of words of the same length we say that $(\Sigma_{\mathcal{F}}, \sigma\mid_{\Sigma_{\mathcal{F}}})$ is a \textit{sub-shift of finite type}. A sub-shift $\Sigma \subset \Sigma_2$ is \textit{sofic} if $\Sigma$ is a factor of a sub-shift of finite type, i.e there exist a sub-shift of finite type $X$ and a semi-conjugacy $h: X \to \Sigma$ \cite[p. 185]{lindmarcus}.

\vspace{1em}We denote by $B_n(\Sigma)$ the set of admissible words of length $n$ in a sub-shift  $(\Sigma,\sigma_{\Sigma})$ and we define the \textit{language of $\Sigma$} by $$\mathcal{L}(\Sigma) = \mathop{\bigcup}\limits_{n=1}^{\infty} B_n(\Sigma).$$ Recall that the \textit{topological entropy of $(\Sigma, \sigma)$} is defined by $$h_{top}(\sigma_{\Sigma}) = \mathop{\lim}\limits_{n \to \infty} \dfrac{1}{n}\log \left|B_n(\Sigma)\right|.$$ In our setting $\log$ means $\log_2$.

\vspace{1em}Recall that given $x$ and $y \in \Sigma_2$, $x$ is \textit{lexicographically less than} $y$, denoted by $x \prec y$ if there exists $k \in \mathbb{N}$ such that $x_j = y_j$ for $i < k$ and $x_k < y_k$. This is a total order on $\Sigma_2$. Moreover, the lexicographic order $\prec$ on $\Sigma_2$ is preserved under the projection map $\pi$, i.e. $x \prec y$ if and only if $\pi(x) < \pi(y)$. We can also consider this order on $B_n(\Sigma)$, in this case $k \leq n$. Given $a, b \in \Sigma_2$ the \textit{lexicographic closed interval from $a$ to $b$} is the set $$\left[a,b\right] = \{ x \in \Sigma_2 \mid a \preccurlyeq  x \preccurlyeq b \}.$$ Similarly, we can also consider the \textit{lexicographic open interval from $a$ to $b$} by changing $\preccurlyeq$ for $\prec$.  We say that $a \in \Sigma_2$ is a \textit{finite sequence} if there exist $n$ such that for every $i \geq n$, $a_i = 0$. Also, we say that $a \in \Sigma_2$ is an \textit{infinite sequence} if there is no $n$ such that $a_i = 0$ for every $i > n$. Observe that every word $\omega$ ending with $1$ can be associated to a finite sequence by considering the sequence $\omega 0^{\infty}$. Note that given a word $\omega \in \mathcal{L}(\Sigma_2)$ starting with $1$, it can be considered as a value $a_{\omega} \in [1(0)^{\infty},1^{\infty})$. The sequence $a_{\omega}$ given by $a_i = \omega_i$ for $1 \leq i \leq \ell(\omega)$ and $a_i = 0$ for every $i > \ell(\omega)$ is called \textit{the finite sequence associated to $\omega$}. In this case, the length of $\omega0^{\infty} = \ell(\omega)$. If $x$ is a finite sequence we define its length as the number of symbols it contains before the block $0^{\infty}$. 

\vspace{1em}Given $x \in \Sigma_2$ and a word $u$ we say that \textit{$u$ occurs in $x$} or \textit{$u$ is contained in $x$} if there are coordinates $i$ and $j$ such that $u = x_i, \ldots,x_j$. For a finite sequence $x$ with length $n$ we say that $0^j$ occurs in $x$ if and only if $0^n$ occurs in $x$ in the word $x_1 \ldots x_n$ for $j < n$.

\vspace{1em}For a sequence $x \in \Sigma_2$ we define \textit{the mirror image of $x$} denoted by $\bar x$ as the sequence $\bar x = (1-x_i)_{i=1}^{\infty}$, and we denote by $\bar x_i$ to $1 - x_i$. We say that a shift space $(\Sigma,\sigma_{\Sigma})$ is \textit{symmetric} if for every $x \in \Sigma$ its mirror image $\bar x$ belongs to $\Sigma$. We are interested in a particular family of symmetric sub-shifts described below. 

\newtheorem{symmetric}[subsection]{Definition}
\begin{symmetric}
Given $a \in \Sigma_2$ such that $a_1 =1$, $$\Sigma_a = \{x \in \Sigma_2 \mid \bar a \prec \sigma^n(x) \prec a \hbox{\rm{ for every }} n \geq 0 \},$$ and $\sigma_a = \sigma\mid_{\Sigma_a}$. The sub-shifts $(\Sigma_a, \sigma_a)$ are called \textit{the symmetric family of sub-shifts parametrised by $a$}. \label{symmetric}
\end{symmetric}

It is clear that if $a \prec b$ then $\Sigma_a \subset \Sigma_b$. 

\newtheorem{finitetype}[subsection]{Proposition}
\begin{finitetype}
If $(\Sigma_{\omega}, \sigma_{\omega})$ is a symmetric sub-shift defined by a word $\omega$ ending with one, then $(\Sigma_{\omega}, \sigma_{\omega})$ is a sub-shift of finite type. \label{finitetype}
\end{finitetype}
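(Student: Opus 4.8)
The plan is to exhibit an explicit finite set $\mathcal{F}$ of forbidden words, all of the same length, for which $\Sigma_{\omega} = \Sigma_{\mathcal{F}}$. Write $N = \ell(\omega)$ and identify $\omega$ with the finite sequence $a = \omega 0^{\infty}$, so that $\bar a = \bar\omega 1^{\infty}$. The condition $a_1 = 1$ together with $\omega$ ending in $1$ guarantees that $N$ is genuinely the length of the defining finite sequence. The defining condition for $\Sigma_{\omega}$ then reads $\bar\omega 1^{\infty} \prec \sigma^n(x) \prec \omega 0^{\infty}$ for every $n \geq 0$, and I want to show that each of these two one-sided constraints is controlled by the length-$N$ factors of $x$.

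First I would prove the comparison lemma that carries the whole argument, and it is precisely here that the hypothesis that $a$ is a \emph{finite} sequence (constant tail $0^{\infty}$) enters. Fix $x$ and $n \geq 0$. Because $\omega 0^{\infty}$ is eventually the least possible tail $0^{\infty}$, the sign of the comparison between $\sigma^n(x)$ and $\omega 0^{\infty}$ is already decided within the first $N$ symbols: if $x_{n+1}\ldots x_{n+N} \prec \omega$ then $\sigma^n(x) \prec \omega 0^{\infty}$, whereas if $x_{n+1}\ldots x_{n+N} \succeq \omega$ then $\sigma^n(x) \succeq \omega 0^{\infty}$, the boundary case of equality on the first $N$ symbols forcing $\succeq$ because the remaining tail of $a$ is the minimal $0^{\infty}$. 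Consequently $\sigma^n(x) \prec \omega 0^{\infty}$ for all $n$ holds if and only if no length-$N$ factor $v$ of $x$ satisfies $v \succeq \omega$. Symmetrically, since $\bar a = \bar\omega 1^{\infty}$ is eventually the maximal tail $1^{\infty}$, one obtains that $\sigma^n(x) \succ \bar\omega 1^{\infty}$ for all $n$ if and only if no length-$N$ factor $v$ of $x$ satisfies $v \preccurlyeq \bar\omega$.

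Combining the two constraints, I would set
$$\mathcal{F} = \{\, v \in B_N(\Sigma_2) \mid v \succeq \omega \ \text{ or }\ v \preccurlyeq \bar\omega \,\},$$
a finite collection of words all of length $N$. Unwinding the definitions, $x \in \Sigma_{\omega}$ if and only if no factor of $x$ lies above $\omega$ and none lies below $\bar\omega$, i.e. if and only if $x$ contains no word from $\mathcal{F}$; hence $\Sigma_{\omega} = \Sigma_{\mathcal{F}}$, and by the definition recalled in Section \ref{basic} this is a sub-shift of finite type. I would also note in passing that $\Sigma_{\omega}$ is $\sigma$-invariant, so $(\Sigma_{\omega},\sigma_{\omega})$ is indeed a sub-shift.

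The main obstacle is the comparison lemma, and within it the boundary case in which a length-$N$ factor equals $\omega$ (respectively $\bar\omega$): there the strict inequality demanded by Definition \ref{symmetric} must still fail, and this is exactly the point where finiteness of $a$ is indispensable, the constant tail $0^{\infty}$ (respectively $1^{\infty}$) being the extreme element of the lexicographic order. Were $a$ not a finite sequence, equality on the first $N$ coordinates would not settle the comparison, the tail would have to be consulted, and the constraint would no longer collapse to the prohibition of finitely many words of bounded length — which is why this SFT conclusion is special to words ending in $1$.
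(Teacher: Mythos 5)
Your proof is correct and takes essentially the same approach as the paper: both arguments show that the defining inequalities are decided within the first $\ell(\omega)$ symbols and then exhibit the finite set $\mathcal{F}$ of length-$\ell(\omega)$ words violating the lexicographic constraints. In fact, your handling of the boundary case is more careful than the paper's written proof: the paper defines $\mathcal{F}$ with \emph{strict} inequalities ($w \succ \omega$ or $w \prec \bar\omega$), which fails to forbid $\omega$ and $\bar\omega$ themselves (for $\omega = 11$ this would give $\mathcal{F} = \emptyset$, hence the full shift, even though $1^{\infty} \notin \Sigma_{11}$), whereas your non-strict version $v \succcurlyeq \omega$ or $v \preccurlyeq \bar\omega$ is exactly what the condition $\bar\omega 1^{\infty} \prec \sigma^n(x) \prec \omega 0^{\infty}$ requires, precisely because the tails $0^{\infty}$ and $1^{\infty}$ are extremal.
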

\begin{proof}
Let $a = a_1 \ldots a_n$ be a finite sequence ending with one. Let $x \in \Sigma_a$. Then, $\overline{a_1}\overline{a_2}\ldots \overline{a_n} \prec \sigma^k(x) \prec a_1a_2\ldots a_n$ for every $k \in \mathbb{N}$, where $\overline{a_i} = 1-a_i$. Consider $$\mathcal{F} = \{w \in \mathcal{B}_n(\Sigma_2) \mid w \succ a_1a_2\ldots a_n \hbox{ or } w \prec \overline{a_1}\overline{a_2}\ldots \overline{a_n}\}.$$ This defines a finite set of forbidden words of length $n$. 
\end{proof}

\newtheorem{sofic}[subsection]{Proposition \cite[Theorem 3.5]{akiyama}}
\begin{sofic}
If $(\Sigma_a, \sigma_a)$ is a symmetric sub-shift defined by a sequence $a$ such that $a$ is pre-periodic then $(\Sigma_a, \sigma_a)$ is a sofic shift.\label{sofic}
\end{sofic}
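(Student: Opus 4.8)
The plan is to show that $(\Sigma_a,\sigma_a)$ has only finitely many distinct follower sets; by the standard follower-set characterisation of sofic shifts (see \cite{lindmarcus}), this is equivalent to soficity. The whole point is that pre-periodicity of $a$ makes a certain bookkeeping state space finite.

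First I would record the key consequence of the hypothesis: if $a$ is pre-periodic, say $a = a_1\cdots a_r\,(a_{r+1}\cdots a_{r+s})^{\infty}$, then the shift-orbit $\mathcal{O}^{+}=\{\sigma^{m}a : m\ge 0\}$ is a finite set, of at most $r+s$ elements. Since $\overline{a}=(1-a_i)_{i\ge1}$ is then pre-periodic with the same pre-period and period, the orbit $\mathcal{O}^{-}=\{\sigma^{m}\overline{a}:m\ge 0\}$ is finite as well. Thus $\mathcal{O}^{+}\cup\mathcal{O}^{-}$ is a finite collection of sequences, and these are exactly the tails against which the future symbols must be compared.

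Next I would attach to each admissible word $w=w_1\cdots w_k\in\mathcal{L}(\Sigma_a)$ a constraint state $\rho(w)$ recording which shifts $\sigma^{j}(w0^{\infty})$ are still capable of violating the defining inequalities. A suffix $w_{j}\cdots w_k$ can only constrain the admissible continuations of $w$ while it is still tied with a prefix of $a$, on the upper side, or of $\overline{a}$, on the lower side; once such a suffix has dropped strictly below $a$, or risen strictly above $\overline{a}$, the corresponding shift is permanently safe and may be forgotten. For a suffix tied with $a_1\cdots a_{k-j+1}$ the admissible futures are governed precisely by the tail $\sigma^{\,k-j+1}a\in\mathcal{O}^{+}$, and similarly on the lower side by an element of $\mathcal{O}^{-}$. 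Hence $\rho(w)$ can be encoded as the set of currently active tails drawn from the finite set $\mathcal{O}^{+}\cup\mathcal{O}^{-}$, and the follower set of $w$ is determined by $\rho(w)$: an infinite future $y$ is admissible after $w$ exactly when $y$ respects every active tail constraint. Since $\mathcal{O}^{+}\cup\mathcal{O}^{-}$ is finite there are only finitely many possible states $\rho(w)$, so $\Sigma_a$ has finitely many follower sets and is therefore sofic.

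I expect the main obstacle to be the bookkeeping in the middle step: a word may have several suffixes simultaneously tied with prefixes of $a$ and of $\overline{a}$, and one must check that appending a symbol updates the active-tail set in a well-defined, memoryless fashion (extending a tie, terminating it below or above the bound, or forbidding the symbol), and that only finitely many such sets occur. A cleaner but less self-contained route, which I would mention as an alternative, is to write $\Sigma_a=\Sigma^{+}\cap\Sigma^{-}$ with $\Sigma^{+}=\{x:\sigma^{n}x\prec a\ \forall n\}$ and $\Sigma^{-}=\{x:\sigma^{n}x\succ\overline{a}\ \forall n\}=\{x:\overline{x}\in\Sigma^{+}\}$: the upper shift $\Sigma^{+}$ is sofic when $a$ is eventually periodic by the classical $\beta$-shift argument, its mirror $\Sigma^{-}$ is sofic because relabelling the two symbols is a conjugacy, and soficity is preserved under intersection (regular languages are closed under intersection). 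The only delicate point there, shared with the direct argument, is the strictness of the inequalities, which affects solely whether an exact periodic tail is attained and can be absorbed into the state description.
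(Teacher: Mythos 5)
There is nothing in the paper to compare your argument against: Proposition \ref{sofic} is not proved there at all, it is imported verbatim from Akiyama--Scheicher \cite[Theorem 3.5]{akiyama}. Judged on its own, your main argument is correct and is the natural self-contained proof. The chain of reasoning --- pre-periodicity makes the set of tails $\mathcal{O}^{+}\cup\mathcal{O}^{-}=\{\sigma^{m}a : m\geq 0\}\cup\{\sigma^{m}\bar a : m \geq 0\}$ finite; for an admissible word $w$ the only constraints its past imposes on the future come from those suffixes of $w$ that are tied with a prefix of $a$ or of $\bar a$, so the follower set of $w$ is determined by a pair of subsets of $\mathcal{O}^{+}$ and $\mathcal{O}^{-}$; there are finitely many such pairs, hence finitely many follower sets, hence soficity by the follower-set characterisation in \cite{lindmarcus} --- is sound. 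The ``memoryless update'' issue you worry about is not actually needed on this route: the characterisation only asks you to count distinct follower sets, not to exhibit a well-defined transition rule (that comes for free from the follower-set automaton once finiteness is known). You could also compress the state further: the active upper constraints collapse to the single constraint given by the lexicographically least active tail of $a$, and dually on the lower side.

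Two caveats. First, in your alternative route the justification of ``soficity is preserved under intersection'' via closure of regular languages under intersection is slightly off, because $\mathcal{L}(\Sigma^{+}\cap\Sigma^{-})$ is in general a \emph{proper} subset of $\mathcal{L}(\Sigma^{+})\cap\mathcal{L}(\Sigma^{-})$; the correct argument takes labelled-graph presentations of $\Sigma^{+}$ and $\Sigma^{-}$ and forms the label product, whose walk labels are exactly the points of the intersection. Second, the strictness issue you flag at the end is genuine, but it is a defect of Definition \ref{symmetric} rather than of your proof: with strict inequalities $\Sigma_a$ need not be closed (for $a=(110)^{\infty}$, say, the point $a$ itself lies in the closure of $\Sigma_a$ but not in $\Sigma_a$), so the proposition must be read as a statement about the closure of $\Sigma_a$, equivalently about its language --- and since taking the closure creates no new finite words, your follower-set computation, which is purely a statement about the language, applies verbatim.
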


\section{The family of exclusion shifts for the doubling map and unique $\beta$-expansions}
\label{exclusion}

\subsection*{The doubling map}

Let us consider the doubling map $f(x) = 2x \mod 1$ defined in the unit circle $S^1$. Recall that given any $x \in [0,1)$ \textit{the binary expansion of $x$} is given by $$x = \mathop{\sum}\limits_{i=1}^{\infty} \dfrac{x_i}{2^i},$$ where $x_i \in \{0, 1\}$. The digit $x_i$ is given by $x_i = 0$ if $f^i(x) \in [0,\frac{1}{2})$ and $x_i = 1$ if $f^i(x) \in [\frac{1}{2},1)$. The projection map $\pi$ is a semi-conjugacy between $2x \mod 1$ and the full one sided shift $\sigma$. Observe that $\pi$ is bi-Lipschitz where $c_1 = \frac{1}{2}$ and $c_2 = 1$ are the Lipschitz constants for $\pi$ and $\pi^{-1}$ respectively. Given $a \in [0,1]$, we will denote its \textit{binary expansion} by $\widetilde{a}$. Let $U = (a, 1-a)$ for $a \in \left[\frac{1}{3},\frac{1}{2} \right]$. In this case consider $$\Lambda_a = ([0,1]\setminus \mathop{\bigcup}\limits_{n=0}^{\infty}f^{-n}(U)) \cap \left[1-2a, 2a\right].$$ Note that if $x \in [0,a]$ then $f(x) \in [0,2a]$ and if $x \in [1-a, 1]$ then $f(x) \in [1-2a, 1]$. Then, $\Lambda_a$ is the attractor of $f\mid_{U}$. It is easy to show that if $a < \frac{1}{3}$ then $\Lambda_a = \emptyset$. Note that $\pi^{-1}(\Lambda_a)$ is a closed invariant subset of $\Sigma_2$ under the shift map. 

\newtheorem{attractor1}[subsection]{Proposition}
\begin{attractor1}
Let $a \in (\frac{1}{3}, \frac{1}{2})$. Then, $\pi^{-1}(\Lambda_a) = \Sigma_{\pi^{-1}(2a)}.$\label{attractor1}
\end{attractor1}
\begin{proof}
Let $x \in \pi^{-1}(\Lambda_a)$. Note that $\pi(x) \in \Lambda_a$, i.e. $$\pi(x) \in \left[1-2a, 2a\right] \cap (X_{(a,b)}).$$ Then, $f^n(\pi(x)) \notin (a,1-a)$ and $1-2a < f^n(\pi(x)) < 2a$ for every $n \geq 0$. By substituting $f^n(\pi(x))$ by $\pi(\sigma^n(x))$ it is clear that $\sigma^n(x) \in \Sigma_a$ for every $n \geq 0$, which implies that $$\pi^{-1}(\Lambda_a) \subset \{x \in \Sigma_2 \mid \pi^{-1}(1-2a) \prec \sigma^n(x) \prec \pi^{-1}(2a) \hbox{\rm{ for every }} n \geq 0\}.$$

Now, consider $$x \in \{x \in \Sigma_2 \mid \pi^{-1}(1-2a) \prec \sigma^n(x) \prec \pi^{-1}(2a) \hbox{\rm{ for every }} n \geq 0\}.$$ Then $2a-1 < \pi(\sigma^n(x)) < 2a$ for every $n \geq 0$. This implies that $f^n(\pi(x)) \in \left[1-2a, 2a\right]$ for every $n \geq 0$. Suppose that $\pi(x) \notin \Lambda_a$. Then, there exist $n \geq 0$ such that $f^n(\pi(x)) \in (a,1-a)$. Recall that $(a,1-a) = (a, \frac{1}{2})\cup[\frac{1}{2},1-a)$. Suppose that $f^n(\pi(x)) \in (a, \frac{1}{2})$ then $f^{n+1}(\pi(x)) \in (2a,1)$. Therefore $\sigma^{n+1}(x)) \succcurlyeq \pi^{-1}(2a)$ which contradicts that $$x \in \{x \in \Sigma_2 \mid \pi^{-1}(2a-1) \prec \sigma^n(x) \prec \pi^{-1}(2a) \hbox{\rm{ for every }} n \geq 0\}.$$ If $f^n(\pi(x)) \in [\frac{1}{2},1-a)$ then $f^{n+1}(\pi(x)) \in [0,1-2a)$. Therefore, $\sigma^{n+1}(x)) \preccurlyeq \pi^{-1}(1-2a)$ which, again, contradicts our assumption on $x$.
\end{proof}

Let $\sigma_a$ denote the restriction of $\sigma$ to $\Sigma_{a}$. We call the family $$\{(\Sigma_{\pi^{-1}(2a)}, \sigma_a) \mid a \in [\frac{1}{3},\frac{1}{2}]\}$$ \textit{the one parameter family of exclusion sub-shifts for the doubling map}. Note that $\Sigma_a$ is a symmetric sub-shift and it can be expressed by $$\Sigma_a = \{x \in \Sigma_2 \mid \sigma^n(x) \prec \pi^{-1}(2a) \hbox{\rm{ and }} 
\overline{\sigma^n(x)} \prec \pi^{-1}(2a) \hbox{\rm{ for every }} n \in \mathbb{N}\},$$ where $\pi^{-1}(2a)$ is considered to end with $0^{\infty}$ if $2a$ is a dyadic rational.

\subsection*{Unique $\beta$-expansions}
Let $1 < \beta < 2$ and $x \geq 0$. Recall that a \textit{$\beta$-expansion for $x$} is a representation of the form
$$x = \mathop{\sum}\limits_{n=1}^{\infty}b_n \beta^{-n},$$ where $b_n \in \{0,1\}$.

\vspace{1em}Recall that the \textit{$\beta$-trans\-for\-ma\-tion} is the map $\tau_{\beta}:[0,1] \to [0,1]$ given by $\tau_{\beta}(x) = \beta x \mod 1$. Observe that $\tau_{\beta}$ defines a $\beta$-expansion for $x \in [0,1]$. This expansion will be given by $b_n = [\beta \tau_{\beta}^{n-1}(x)]$ for $n \in \mathbb{N}$ and it is named \textit{the greedy expansion of $x$}. Then $X_{\beta} \subset \Sigma_2$ is the set of all sequences obtained this way. Note that $\sigma_{\beta} = \sigma\mid_{X_{\beta}}$ is conjugated to $\tau_{\beta}$ \cite{sidorov3}. Let $a = (a_i)_{i=1}^{\infty}$ be the greedy expansion of $1$. In \cite{parry2}, Parry has shown that $$X_{\beta} = \{x \in \Sigma_2 \mid \sigma^n(x) \prec (a_i)_{i=1}^{\infty}\}.$$ $(X_{\beta}, \sigma_{\beta})$ where $\sigma_{\beta} = \sigma\mid_{X_{\beta}}$ is \textit{the usual $\beta$ shift}. 
\vspace{1em}It is easy to show that for every $x \in [0, \frac{1}{\beta-1}]$ there is at least one expansion for $x$ obtained in a similar way to the doubling map. $0$ and $\frac{1}{\beta-1}$ are avoided since their expansions are $0^{\infty}$ and $1^{\infty}$ respectively. Let $x \in (0, \frac{1}{\beta -1})$ and the following multivalued map described as follows: $$T_{\beta} = \left\{
\begin{array}{clrr}      
\beta x, & \hbox{\rm{ if }}  x \in [0, \frac{1}{\beta}];\\
\beta x, \hbox{\rm{ or }} \beta x - 1 & \hbox{\rm{ if }} x \in (\frac{1}{\beta}, \frac{1}{\beta(\beta - 1)});\\
\beta x - 1, & \hbox{\rm{ if }} x \in [\frac{1}{\beta(\beta - 1)}, \frac{1}{\beta-1}].\\
\end{array}
\right.$$

Observe that if $x \in (\frac{1}{\beta}, \frac{1}{\beta(\beta - 1)})$ we have two choices of values. The region $x \in (\frac{1}{\beta}, \frac{1}{\beta(\beta - 1)})$ is called \textit{the switch region}. It is clear that $0$ and $\frac{1}{\beta - 1}$ have unique expansions given by $0^{\infty}$ and $1^{\infty}$ respectively. In \cite{erdos} Erd\"os et al. shown that for every $\beta < \varphi$ any $x \in (0, \frac{1}{\beta-1})$ has uncountably many $\beta$-expansions. Nonetheless, in \cite[Theorem 2]{sidorov2} (Theorem \ref{teo1} on Section \ref{resultstransitivityandentropy}) the authors show the set of points with unique $\beta$-expansion in $(0, \frac{1}{\beta-1})$, $\widetilde{X_{\beta}}$, is non-empty if $\beta > \varphi$ where $\varphi$ is the golden ratio. We say that a $\beta$-expansion of $x$, $b = (b_n)_{n=1}^{\infty}$ is \textit{lazy} if $\bar b$ is greedy. Let $\beta \in (\varphi, 2)$. \cite[Lemma 2]{erdos} gives the necessary conditions of a sequence $(b_n)_{n=1}^{\infty}$ to be a unique $\beta$-expansion of $x \in \widetilde{X_{\beta}}$, that is $(b_n)_{n=1}^{\infty}$ is greedy and lazy. Also, by \cite[Theorem 1]{erdos}, we assure that a sequence $(b_n)_{n=1}^{\infty} \in \Sigma_{2}$ is a unique expansion of $1$ if then $(b_n)_{n=1}^{\infty}$ is greedy and lazy. 

\vspace{1em}Let $\beta \in (\varphi,2)$ and $\Sigma_{\beta} \subset \Sigma_2$ the set of sequences which are unique $\beta$-expansions for $x \in (0, \frac{1}{\beta-1})$. Let $\pi_{\beta}: \Sigma_{\beta} \to \widetilde{X}_{\beta}$ the projection given by $$\pi_{\beta}(x)  = \mathop{\sum}\limits_{n=1}^{\infty}b_n \beta^{-n}.$$ Observe that if $x_1 = 0$, $\pi_{\beta} \in [0, \frac{1}{\beta}]$ and if $x_1 = 1$, $\pi_{\beta} \in [\frac{1}{\beta(\beta - 1)}, \frac{1}{\beta - 1}]$. Then $\widetilde{X}_{\beta}$ has empty intersection with the switch region \cite{sidorov4}. In \cite{parry2} it was shown that for every $x \leq 1$ the greedy expansion of $x$, $(b_n)_{n=1}^{\infty}$, satisfies that $\sigma^k((b_n))_{n=1}^{\infty} \prec (d_n)_{n=1}^{\infty}$ for every $k \geq 0$, where $(d_1)_{n=1}^{\infty}$ is the greedy expansion of $1$. Then if $(b_n)_{n=1}^{\infty}$ is unique, $\overline{\sigma^k(b_n)_{n=1}^{\infty}} = \sigma^k(\bar b_n)_{n=1}^{\infty} \prec (d_n)_{n=1}^{\infty}$. This implies that $\Sigma_{\beta}$ is $\sigma$-invariant and it can be represented symbolically as $$\Sigma_{\beta} = \{x \in \Sigma_2 \mid \sigma^n(x) \prec (d_i)_{i=1}^{\infty} \hbox{\rm{ and }} 
\overline{\sigma^n(x)} \prec (d_i)_{i=1}^{\infty} \hbox{\rm{ for every }} n \in \mathbb{N}\}.$$

\vspace{1em}Note that $\Sigma_{\beta}$ and $\Sigma_a$ are essentially the same sub-shift. Let $\varphi < \beta < 2$. Given the unique $\beta$-expansion of $1$, $(d_n)_{n=1}^{\infty}$, there exist $a \in (\frac{1}{3}, \frac{1}{2})$ such that $\pi^{-1}(2a) = (d_n)_{n=1}^{\infty}$. 

\vspace{1em}We say that a sequence $a$ is a \textit{Parry symmetric sequence} if $a$ is the unique $\beta$-expansion of $1$ for some $\beta \in (\varphi,2)$ and we denote the set of Parry's symmetric sequences by $P$. Therefore, by \cite[Theorems 3.7, 3.8]{nilsson} we can assure that $\pi(P)$ is a set of Lebesgue measure zero with $\dim_H(\pi(P)) = 1$. Denote by $N = \Sigma^1_2 \setminus P$ where $\Sigma^1_2$ is the set of sequences such that $x_1 = 1$. Note that, by \cite[Proposition 4.1]{bundfuss} we can assure that if $\pi^{-1}(2a) \in N$ then $\Sigma_a$ is a sub-shift of finite type. For every $a \in N$ consider $$n_a = \mathop{\min}\{n \in \mathbb{N} \mid \sigma^n(a) \succ a\}.$$ Note that $n_a$ exists for every $a \in N$. Then we define $\varsigma: \Sigma^1_2 \to P$ by $$\varsigma(a) = \left\{
\begin{array}{clrr}      
a & \hbox{\rm{ if }}  a \in P\\
(a_1 \ldots a_{n_a})^{\infty} & \hbox{\rm{ otherwise.}}&\\
\end{array}
\right.$$

Observe that $\varsigma$ is well defined, since $n_a$ exists for every $a \in N$ and it is unique. Also, $\varsigma$ is surjective but not injective. The following lemma shows that the sub-shift corresponding to sequence $a \in N$ have the same dynamical properties as a sub-shift defined by a Parry symmetric sequence.

\newtheorem{lemabeta}[subsection]{Lemma}
\begin{lemabeta}
For every $a \in \Sigma_2$ starting with $1$, $\Sigma_{a} = \Sigma_{\varsigma(a)}$.\label{lemabeta}
\end{lemabeta}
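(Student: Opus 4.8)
The plan is to treat the two clauses in the definition of $\varsigma$ separately. If $a\in P$ then $\varsigma(a)=a$ and there is nothing to prove, so I assume $a\in N$ and write $w=a_{1}\ldots a_{n_{a}}$ and $b=\varsigma(a)=w^{\infty}$. Throughout I would use the equivalent description of the symmetric sub-shift recorded earlier, namely that $x\in\Sigma_{a}$ iff $\sigma^{n}(x)\prec a$ and $\overline{\sigma^{n}(x)}\prec a$ for all $n\geq 0$, so that it suffices to compare one-sided tails against the defining sequence; the constraint coming from $\bar a$ is recovered by applying each argument to $\bar x$ in place of $x$, using that $\overline{\sigma^{n}(x)}=\sigma^{n}(\bar x)$.

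First I would record the easy inclusion. Since $a=w\,\sigma^{n_{a}}(a)$ with $\sigma^{n_{a}}(a)\succ a$ by definition of $n_{a}$, comparing $b=w\,w^{\infty}$ with $a$ past their common prefix $w$ yields $b\preccurlyeq a$; by the monotonicity remark that $a'\prec a''$ implies $\Sigma_{a'}\subseteq\Sigma_{a''}$ this gives $\Sigma_{b}\subseteq\Sigma_{a}$. The technical heart is to show that $b$ is self-admissible, i.e.\ that $\sigma^{k}(w^{\infty})\preccurlyeq w^{\infty}$ for every $k\geq 0$; by periodicity it is enough to treat $0\le k<n_{a}$. Assuming some $\sigma^{k}(w^{\infty})\succ w^{\infty}$ and letting $j$ be the first index of disagreement, I would split according to whether $j\le n_{a}-k$ or $j>n_{a}-k$. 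In the first case the discrepancy already occurs inside the block where $\sigma^{k}(w^{\infty})$ agrees with $\sigma^{k}(a)$ and where $w^{\infty}$ agrees with $a$, so it forces $\sigma^{k}(a)\succ a$, contradicting the minimality of $n_{a}$; in the second case the coincidence of a suffix and a prefix of $w$ lets me reduce, via the period $n_{a}$ of $w^{\infty}$, to a strictly earlier shift $\sigma^{k'}(a)\succ a$ with $k'<n_{a}$, again contradicting minimality.

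For the reverse inclusion $\Sigma_{a}\subseteq\Sigma_{b}$ I would argue by contraposition together with a descent. Suppose $x\in\Sigma_{a}$ but $x\notin\Sigma_{b}$; after possibly replacing $x$ by $\bar x$ there is a shift $y=\sigma^{n}(x)$ with $y\succcurlyeq w^{\infty}$, while $y\prec a$ since $x\in\Sigma_{a}$. As $a$ and $w^{\infty}$ share the prefix $w$ and $w^{\infty}\preccurlyeq y\prec a$, the first $n_{a}$ symbols of $y$ must equal $w$; peeling off this copy of $w$ produces $\sigma^{n_{a}}(y)$, a further shift of $x$ that again satisfies $w^{\infty}\preccurlyeq \sigma^{n_{a}}(y)\prec a$. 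Iterating forces $y=w^{\infty}$, so the only way to lie in $\Sigma_{a}\setminus\Sigma_{b}$ is to possess a shift equal to $w^{\infty}$, and controlling this single periodic orbit is exactly what the self-admissibility of $w^{\infty}$ from Step~2 is meant to do. I expect this boundary orbit to be the main obstacle, both because the strict inequalities in the definition of $\Sigma_{a}$ make the endpoint $w^{\infty}$ delicate and because it is precisely where the combinatorics of $n_{a}$ and the minimality condition must be used most carefully.
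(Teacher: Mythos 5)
Your architecture is the paper's own: dispose of $a\in P$ trivially, get $\Sigma_{\varsigma(a)}\subseteq\Sigma_{a}$ from $\varsigma(a)\preccurlyeq a$, and prove the reverse inclusion by a descent exploiting the periodicity of $w^{\infty}$ and the common prefix $w$ of $a$ and $w^{\infty}$. Your ``peeling'' step is exactly the paper's descent on the first index of disagreement, phrased block-by-block, and it correctly shows that any shift $y=\sigma^{n}(x)$ of a point $x\in\Sigma_{a}$ with $y\succcurlyeq w^{\infty}$ must in fact equal $w^{\infty}$. (Two smaller remarks: your justification of $b\preccurlyeq a$ is circular as written, since comparing ``past the common prefix'' amounts to comparing $\sigma^{n_{a}}(a)$ with $b$, which is what you are trying to establish; it is fixable by the same periodicity descent. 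This is not the real problem.)

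The genuine gap is the boundary case you yourself flag, and the tool you propose for it cannot close it. Self-admissibility of $w^{\infty}$ (your Step 2) gives $\sigma^{k}(w^{\infty})\preccurlyeq w^{\infty}\prec a$ for all $k$, so it shows the entire forward orbit of $w^{\infty}$ \emph{satisfies} the upper constraint defining $\Sigma_{a}$; it argues in the wrong direction. The only way to exclude a point with $\sigma^{n}(x)=w^{\infty}$ from $\Sigma_{a}$ would be a violation of the lower (mirror) constraint, i.e.\ some $\overline{\sigma^{k}(w^{\infty})}\succcurlyeq a$, and this need not hold. Concretely, take $a=110\,1^{\infty}$ ($110$ followed by all ones): then $\sigma^{3}(a)=1^{\infty}\succ a$ while $\sigma^{1}(a),\sigma^{2}(a)\prec a$, so $a\in N$, $n_{a}=3$ and $\varsigma(a)=(110)^{\infty}$; one checks directly that all three shifts $(110)^{\infty},(101)^{\infty},(011)^{\infty}$ lie strictly between $\bar a=0010^{\infty}$ and $a$, hence $(110)^{\infty}\in\Sigma_{a}$, yet $(110)^{\infty}\notin\Sigma_{\varsigma(a)}$ because $\sigma^{3}\bigl((110)^{\infty}\bigr)=(110)^{\infty}$ is not strictly below $(110)^{\infty}$. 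So with the paper's strict-inequality definition of $\Sigma_{a}$ the claimed equality actually fails on the periodic orbit of $\varsigma(a)$, and no completion of your final step can avoid this; the equality holds only modulo the countable set of points eventually landing on that orbit (which is all that the later entropy and measure arguments need). For the comparison you were asked about: the paper's proof has exactly the same blind spot, since it sets $k=\min\{j\geq n_{a}\mid \varsigma(a)_{j}<x_{j}\}$ and thereby tacitly assumes a strict disagreement exists, which is false precisely in the case above. Your instinct that the boundary orbit is ``the main obstacle'' is correct; what is missing is the recognition that it is not an obstacle to be overcome but a genuine exception to the statement as written.
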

\begin{proof}
Note that if $a \in P$ then $\Sigma_a = \Sigma_{\varsigma(a)}$. Consider $a \in N$. Note that $\varsigma(a) \prec a$, then $\Sigma_{\varsigma(a)} \subset \Sigma_a$. Assume that there exists $x \in \Sigma_a  \setminus \Sigma_{\varsigma(a)}$. This implies that $x_i = \varsigma(a)_i$ at least for every $1 \leq i \leq n_a$. Let $k = \min \{j \geq n_a \mid \varsigma(a)_j < x_j\}$. Observe that $x_k \leq a_j$. Since $x \in \Sigma_a$ then $\sigma^n(x) \prec a$ for every $n \geq 0$. Consider $\sigma^{n_a}(x)$. Note that $\sigma^{n_a}(x)_{j-n_a} \succ \varsigma(a)_{j-n_a}$. This implies that $\sigma^{n_a}(x)_{j-n_a} \succ a_{j-n_a}$, that is $x \notin \Sigma_a$ which is a contradiction. 
\end{proof}

\section{Transitivity and the Entropy function}
\label{resultstransitivityandentropy}

\subsection*{Transitivity}
\noindent In this section we place particular emphasis in showing the arithmetic conditions of the defining sequence $\omega$ in order to have a transitive sub-shift of finite type (Theorem \ref{transitivity}) and we give a lower bound for this to occur (Theorem \ref{critical}). Recall that a sub-shift $(\Sigma, \sigma_{\Sigma})$ is \textit{(topologically) transitive} or \textit{irreducible} if for every ordered pair of words $u,v \in \mathcal{L}(\Sigma)$ there is a $w \in \mathcal{L}(\Sigma)$, called a \textit{bridge}, such that $uwv \in \mathcal{L}(\Sigma)$ \cite{lindmarcus}.The following result gives us a lower bound for the sequences whose define symmetric sub-shifts.

\newtheorem{teo1}[subsection]{Theorem \cite[Theorem 2]{sidorov2}}
\begin{teo1}
Let $a \in [1(0)^{\infty},1^{\infty})$. Then:
\begin{enumerate}[i)]
\item $\Sigma_a$ is empty if $a \in [1(0)^{\infty},(10)^{\infty}]$;
\item $\Sigma_a = \{(01)^{\infty},(10)^{\infty}\}$ if $a \in ((10)^{\infty}, (1100)^{\infty}]$;
\item $\Sigma_a$ is countable $a \in ((1100)^{\infty},a^*)$;
\item $\Sigma_a$ is uncountable $a \in [a^*,1^{\infty})$.
\end{enumerate}
\label{teo1}
\end{teo1}

Here, $a^*$ is the image under the shift map of the \textit{Thue-Morse sequence}. 

\vspace{1em}Note that for a finite sequence $a$, the shift $(\Sigma_a, \sigma_a)$ admits a \textit{maximal (respectively minimal) word of length $\ell(a)$} given by $a_{\max} = a_1 \ldots a_{\ell(a) - 1}0$ and $a_{\min} = \bar a_{max}$. Moreover, for every $1 \leq n < \ell(a)$, the words $a_{\max_n} = a_1 \ldots a_n$ is the \textit{maximal word of length $n$} and $\bar a_{\max_n}$ is the \textit{minimal word of length $n$}. 

\newtheorem{consecutive}[subsection]{Proposition}
\begin{consecutive}
Let $n \geq 2$. For any $a \in (1^n,1^{n+1}]$, if $x \in \Sigma_a$ then $0^{n+1}$ and $1^{n+1}$ does not occur in $x$. \label{consecutive}
\end{consecutive}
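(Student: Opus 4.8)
The plan is to convert the purely combinatorial conclusion (that $0^{n+1}$ and $1^{n+1}$ are forbidden words) into the lexicographic language defining $\Sigma_a$, and then read off a contradiction directly from the defining inequalities $\bar a \prec \sigma^k(x) \prec a$. So the first step I would carry out is to unpack what the hypothesis $a \in (1^n,1^{n+1}]$ tells us about the digits of $a$. Interpreting the endpoints as the finite sequences $1^n0^{\infty}$ and $1^{n+1}0^{\infty}$, the inequality $1^n0^{\infty} \prec a$ forces $a_1 = \cdots = a_n = 1$: if instead $a_j = 0$ for some $j \le n$, the first disagreement with $1^n0^{\infty}$ would occur at position $j$ with $a_j = 0$, giving $a \prec 1^n0^{\infty}$. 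The upper bound $a \preccurlyeq 1^{n+1}0^{\infty}$ I would keep as is. Dually, since the mirror map reverses the order, $\bar a$ begins with $0^n$ and satisfies $\bar a \succcurlyeq \overline{1^{n+1}0^{\infty}} = 0^{n+1}1^{\infty}$.

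Next I would dispatch the two forbidden blocks in parallel, each by a one-line contradiction. Suppose $1^{n+1}$ occurs in $x \in \Sigma_a$, say $x_{k+1} = \cdots = x_{k+n+1} = 1$ for some $k \ge 0$. Then $\sigma^k(x)$ begins with the prefix $1^{n+1}$, and because $1^{n+1}0^{\infty}$ is the lexicographically smallest sequence carrying that prefix, we get $\sigma^k(x) \succcurlyeq 1^{n+1}0^{\infty} \succcurlyeq a$, contradicting the requirement $\sigma^k(x) \prec a$ in the definition of $\Sigma_a$. The case of $0^{n+1}$ is the exact mirror: if $0^{n+1}$ occurs then some $\sigma^k(x)$ begins with $0^{n+1}$, and since $0^{n+1}1^{\infty}$ is the lexicographically largest sequence with that prefix, $\sigma^k(x) \preccurlyeq 0^{n+1}1^{\infty} \preccurlyeq \bar a$, contradicting $\bar a \prec \sigma^k(x)$.

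The whole argument is short, and the only delicate point is the bookkeeping at the right endpoint $a = 1^{n+1}0^{\infty}$: there $\sigma^k(x)$ could in principle equal $1^{n+1}0^{\infty}$, so the chain $\sigma^k(x) \succcurlyeq 1^{n+1}0^{\infty} \succcurlyeq a$ yields $\sigma^k(x) \succcurlyeq a$ with possible equality, and I must be sure this still contradicts the defining condition. It does, precisely because that condition demands the \emph{strict} inequality $\sigma^k(x) \prec a$; the symmetric remark handles the $0^{n+1}$ case and the lower bound $\bar a \prec \sigma^k(x)$. Beyond this strictness check I do not expect any real obstacle, since everything reduces to the elementary fact that any sequence with prefix $1^{n+1}$ (respectively $0^{n+1}$) is bounded below by $1^{n+1}0^{\infty}$ (respectively above by $0^{n+1}1^{\infty}$).
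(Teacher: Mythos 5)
Your proof is correct and takes essentially the same route as the paper: locate an occurrence of the forbidden block, shift to it, and derive a lexicographic contradiction with the defining inequalities $\bar a \prec \sigma^k(x) \prec a$, handling $0^{n+1}$ by the mirror-image duality. Your write-up is in fact slightly more careful than the paper's (which disposes of the $0^{n+1}$ case with the phrase ``by symmetry'' and does not comment on the endpoint $a = 1^{n+1}0^{\infty}$), but the underlying argument is identical.
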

\begin{proof}
Let $n \geq 2$ and $a \in (1^n,1^{n+1}]$. Since $\Sigma_a$ is symmetric, it contains the sub-shift given by $a^{\prime} = 1^n$, then the words $0^n$ and $1^n$ are admissible words of length $n$ for $\Sigma_a$. Suppose that there exists a point $x \in \Sigma_a$ such that the block $1^{n+1}$ occurs. Let $k \in \mathbb{N}$ be such that $[x_{k}, x_{k+n+1}] = 1^{n+1}$. Consider $\sigma^{k}(x)$. Notice that $\sigma^{k}(x) \succcurlyeq 1^{n+1}$, which is a contradiction. By symmetry we can conclude the result.
\end{proof}

The following Theorem shows that not every symmetric sub-shift is transitive. 

\newtheorem{critical}[subsection]{Theorem}
\begin{critical}
For any $a \in \left((10)^{\infty}, 11(01)^{\infty} \right)$, $\Sigma_a$ is not transitive.\label{critical}
\end{critical}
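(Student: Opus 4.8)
The plan is to negate the bridge characterisation of transitivity directly, namely to exhibit two words $u,v\in\mathcal{L}(\Sigma_a)$ for which no $w$ gives $uwv\in\mathcal{L}(\Sigma_a)$. First I would split the parameter interval with Theorem \ref{teo1}: for $a\in((10)^\infty,(1100)^\infty]$ the set $\Sigma_a$ collapses to the explicit two-point orbit $\{(01)^\infty,(10)^\infty\}$, whose language consists only of alternating words, and this degenerate case is treated directly on the finite system. The substantive range is $a\in((1100)^\infty,11(01)^\infty)$, where $(1100)^\infty\in\Sigma_a$ puts $11$ and $00$ into $\mathcal{L}(\Sigma_a)$, while $(01)^\infty,(10)^\infty\in\Sigma_a$ keep every alternating word in $\mathcal{L}(\Sigma_a)$. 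The obstruction I aim for is that, once the block $11$ has been read, long alternating words can never be produced again.

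The key structural fact is a confinement coming from $a\prec 11(01)^\infty$. Since $a\succ(1100)^\infty$ one first checks that $a$ begins with $11$, so $c:=\sigma^2(a)\prec(01)^\infty$; let $k\ge 0$ be read off from $c=(01)^k00\cdots$, i.e. $2k+2$ is the first coordinate at which $c$ drops below $(01)^\infty$. I would then prove the following: whenever $x\in\Sigma_a$ and $x_ix_{i+1}=11$, the inequality $\sigma^{i-1}(x)\prec a\prec 11(01)^\infty$ yields $x_{i+2}x_{i+3}\cdots\prec c$, which forces the alternating block immediately following the $11$ to contain at most $k$ copies of $01$. Passing to mirror images and using $\overline{c}=(10)^k11\cdots$ gives the symmetric bound after every occurrence of $00$. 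Hence in any $x\in\Sigma_a$, every maximal alternating block whose immediate left neighbour repeats its first symbol (that is, which is preceded by $11$ or $00$) has length at most $2k+1$.

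With this in hand I set $u=11$ and $v=(01)^{k+1}$, both lying in $\mathcal{L}(\Sigma_a)$ by the remarks above. Suppose $11\,w\,(01)^{k+1}\in\mathcal{L}(\Sigma_a)$, occurring inside some $x\in\Sigma_a$. The terminal $(01)^{k+1}$ lies in a maximal alternating block $R$ of length at least $2k+2$. Because the word begins with $11$, the block $R$ cannot start at the first coordinate, so its left end is immediately preceded by a coordinate carrying the same symbol, i.e. $R$ is preceded by an occurrence of $11$ or $00$. The confinement bound then forces $R$ to have length at most $2k+1$, a contradiction. Thus no bridge exists and $\Sigma_a$ is not transitive.

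The main obstacle is the confinement lemma: establishing uniformly, for every $a\prec 11(01)^\infty$ and not merely for finite or periodic $a$, that alternating blocks following a junction are bounded by the single constant $k$ extracted from $a$. This needs careful lexicographic bookkeeping of the phase of each alternating block — a block in $01$-phase can only follow $11$ and one in $10$-phase only $00$, since otherwise $000$ or $111$ would appear, which $a\prec 11(01)^\infty$ also forbids — together with the fact that $\overline{c}=(10)^k11\cdots$, so that the forward and mirrored bounds share the same $k$. The remaining points (membership of $u$ and $v$ in the language, and the two-point boundary case) are routine.
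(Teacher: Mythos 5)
Your strategy is the same one the paper uses: exhibit $u=11$ together with a long alternating word $v$ that can never be bridged, exploiting the fact that after an occurrence of $11$ (resp.\ $00$) an admissible sequence can carry at most $k$ copies of $01$ (resp.\ $10$). The execution, however, contains a fatal off-by-one error. What your confinement inequality bounds is the run \emph{strictly} following the doubled symbol: if $x_ix_{i+1}=11$ then $x_{i+2}x_{i+3}\cdots\prec c=(01)^k00\cdots$, so that run has length at most $2k+1$. But the maximal alternating block $R$ ``preceded by $11$ or $00$'' begins \emph{at the second symbol of the double} (that symbol alternates with what follows), so the correct bound is $|R|\leq 2k+2$, not $2k+1$. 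Your witness $v=(01)^{k+1}$ has length exactly $2k+2$, so the inequality $|R|\geq 2k+2$ yields no contradiction. (Your parenthetical phase remark is symptomatic of the same confusion: a maximal block preceded by $00$ starts with $0$, i.e.\ it is precisely the $01$-phase blocks that follow $00$, not $11$.)

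The gap is not cosmetic, because $(01)^{k+1}$ really can occur after $11$: it sits inside the transition block $00(10)^k11$, read from the second $0$, and such transitions are admissible for many $a$ in the interval. Concretely, take $k=1$ and $a=(110100)^{\infty}$, which lies in $\left((1100)^{\infty},11(01)^{\infty}\right)$; the sequence $x=110010110100(1100)^{\infty}$ belongs to $\Sigma_a$ (all shifts lie strictly between $(001011)^{\infty}$ and $(110100)^{\infty}$), and its prefix is $11\,0\,0101$, so the one-letter word $w=0$ bridges your pair $u=11$, $v=(01)^{2}$. The argument is rescued by taking $v$ to be any alternating word of length at least $2k+3$, e.g.\ $v=(10)^{k+2}$: then $|R|\geq 2k+3>2k+2$ and the contradiction is genuine. (In fairness, the paper's own proof takes $v=(10)^{k+1}$, also of length $2k+2$, and its claim that a bridge ending in $01$ forces $11(01)^{k+1}$ has the same defect — only $11(01)^k0$ is created, which extends admissibly by $0$ — so the same lengthening is needed there.) Finally, your plan to treat $a\in\left((10)^{\infty},(1100)^{\infty}\right]$ ``directly'' cannot succeed: there $\Sigma_a$ is the single period-two orbit $\{(01)^{\infty},(10)^{\infty}\}$, and a periodic orbit does satisfy the bridge definition of transitivity, so non-transitivity is simply false on that sub-interval; the paper sidesteps this by tacitly assuming $a=11(01)^k001\cdots$.
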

\begin{proof}
Consider $a \in ((10)^{\infty}, 11(01)^\infty)$. Note that $a$ cannot have three consecutive zeros or ones because $11(01)^\infty \prec 111$. Note that $a = 11(01)^{k}001\ldots$ for some $k \geq 1$. This implies that the blocks $11(01)^{k+1}$ and $00(10)^{k+1}$ are not allowed. Observe that the periodic orbit $(01)^{\infty} \in \Sigma_a$. This implies that the block $(10)^k$ is admissible for every $k \in \mathbb{N}$. Consider the words $u=11$ and $v=(10)^{k+1}$. Note that $u$ and $v$ can not be concatenated. Besides, they can not be connected by $1$ because $u1v$ will have three consecutive ones, and they can not be connected by $0$ because the block $[u0v]$ contains the block $[11(01)^{k+1}]$. Note that $u$ and $v$ can not be connected by $00$ (and by any word ending in $00$) because the block $[00(10)^{k+1}]$ will occur. Furthermore, $u$ and $v$ can not be connected by $01$ (and by any word ending in $01$ or $1$) because the block $[11(01)^{k+1}]$ will occur. Therefore the only possibility is to consider a word $w$ ending in $10$. Note that if $w_{n-1}w_n = 10$, then $w_{n-3}w_{n-2} = 01$ by the argument shown above. Then, $u$ and $v$ can not be connected. This shows that $\Sigma_a$ is not transitive.
\end{proof}

For any finite sequence $\omega$, we define $t(\omega)$ to be $$\omega  \stackrel{t}{\longrightarrow} w_1\ldots w_{\ell(\omega)} \bar w_1 \ldots \bar w_{\ell(\omega)-1}1.$$ If $\omega = 10^{\infty}$ then $\omega  \stackrel{t}{\longrightarrow} 101$. We denote by $\omega^{\prime}$ to $t(\omega)$. In addition, let

\begin{itemize}
 \item [$i)$] $\omega^{\prime \prime} = \mathop{\lim}\limits_{n \to \infty} t^n(\omega)$; 
 \item [$ii)$] $\omega^{\prime \prime \prime} = w_1\ldots w_{\ell(\omega)} (\bar w_1 \ldots w_{\ell(\omega)-1}1)^{\infty}$;
\end{itemize}

\vspace{1em} We define $1^{\prime \prime \prime}$ to be $1(01)^{\infty}$. Note that $\omega^{\prime \prime}$ coincides with $a^*$, if $\omega = 11$. We will name $\omega^{\prime \prime}$ the \textit{generalised Thue-Morse sequence of $\omega$}.

\vspace{1em}From now on, we may assume that $\omega > a^*$ in order to have $h_{top}(\Sigma_{\omega}) > 0$.

\newtheorem{irreducibleword}[subsection]{Definition}
\begin{irreducibleword}
\normalfont{We say that a finite sequence starting with $1$ $\omega \in \mathcal{L}(\Sigma_2)$ is \textit{irreducible} if for any $k < \ell(\omega)$ such that $w_k = 1$ the word $\omega_k = w_1, \ldots, w_k$ satisfies that $\omega_k^{\prime \prime \prime} \prec \omega$.}\label{irreducibleword}
\end{irreducibleword}

Note that $11$ is an irreducible word because of the way we define $1^{\prime \prime \prime}$. It is not true that for any $k \leq \ell(\omega)$ such that $w_k = 1$ the word $\omega_k^{\prime \prime \prime}$ belongs to $\Sigma_{\omega}$. Observe that if $\omega$ is a finite sequence ending with $1$, then for every $k \in \mathbb{N}$ and for every $u \in B_k(\Sigma_{\omega})$ there exists $j > k$ such that the word $u_1 \ldots u_k(u_{k+1} \ldots u_{j-1}1) \in B_j(\Sigma_{\omega})$. This is a direct consequence of the symmetry of the sub-shift and \cite[Theorem 1]{erdos}.

\newtheorem{remark2}[subsection]{Lemma}
\begin{remark2}
Let $\omega$ be a finite sequence starting with $1$. Then for $v \in \mathcal{L}(\Sigma_{\omega})$ such that $v_1 = 0$, the word $1v \in \mathcal{L}(\Sigma_{\omega})$. \label{remark2}
\end{remark2}
\begin{proof}
Let $v \in \mathcal{L}(\Sigma_{\omega})$ and consider the word $1v$. Note that for every $1 \leq j \leq \ell(v)$, $\sigma^j(1v) \in  \mathcal{L}(\Sigma_{\omega})$. By Theorem \ref{teo1} $\omega_i = 1$ for every $i \in {1, \ldots n}$ for some $2 \leq n \leq \ell(\omega)$. This implies that $1v \prec \omega$. By symmetry $\bar \omega \prec 1v$.
\end{proof}

Let $\omega \in \left[11(01)^{\infty}, 1^{\infty} \right]$. Note that by Definition \ref{symmetric} and Lemma \ref{remark2} it is true that for any $v \in \mathcal{L}(\Sigma_{\omega})$ such that $v_1 = 1$ the word $0v \in \mathcal{L}(\Sigma_{\omega})$. Therefore, Lemma \ref{remark2} also implies that for any word starting with $0$ (respectively $1$) the word $1(01)^kv$ is admissible (respectively $0(10)^kv$) for any $k \in \mathbb{N}$. Also, note that for every $v\in \mathcal{L}(\Sigma_{\omega})$ such that $v_1 = 1$. Proposition \ref{consecutive} and an adapted proof of Lemma \ref{remark2} implies that the word $0^{n-1}v$ is admissible if $\omega \in \left[1^n, 1^{n+1}\right]$. Moreover, note that it is not always possible to concatenate a finite word $u$ ending with $1$ with $(01)^k$ gluing it to the left, i.e. $u(01)^k$. Next, we show that the sub-shift given by an irreducible word $\omega$ is transitive. The presented proof give us an upper bound for the specification number of a symmetric sub-shift of finite type defined in Section \ref{specificationsection}. This bound plays an important role in the results presented in Section \ref{specificationsection}.

\newtheorem{transitivity}[subsection]{Theorem}
\begin{transitivity}
If $\omega$ is an irreducible word then $\Sigma_{\omega}$ is a transitive sub-shift of finite type.\label{transitivity}
\end{transitivity}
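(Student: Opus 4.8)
The plan is to prove transitivity directly from the definition: given two admissible words $u, v \in \mathcal{L}(\Sigma_\omega)$, I must exhibit a bridge $w$ so that $uwv \in \mathcal{L}(\Sigma_\omega)$. By Proposition \ref{finitetype} the sub-shift is of finite type (since $\omega$ is a finite sequence ending with $1$), so it suffices to produce, for \emph{some} choice of endings, a word $uwv$ none of whose length-$\ell(\omega)$ subwords exceed $\omega$ or fall below $\bar\omega$ lexicographically. The natural strategy is to route through the ``neutral'' periodic word $(10)^\infty$, which is always admissible once $\omega \succ (10)^\infty$: first extend $u$ on the right to reach a safe synchronising state, then insert a controlled block of $(01)$ or $(10)$ pairs, then prepend to $v$ whatever is needed to enter it legally.

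\textbf{First step.} I would reduce to the case where $u$ ends with $1$ and $v$ begins with $0$ (or handle the parity cases symmetrically). The key enabling tools are already in the excerpt: Lemma \ref{remark2} says any admissible $v$ with $v_1=0$ can be prefixed by $1$, and the remarks following it extend this to prefixing by $1(01)^k$ (resp. $0(10)^k$) for any $k$. Dually, by symmetry (the mirror image of an admissible word is admissible), an admissible $u$ ending with $1$ can be \emph{suffixed} by $(10)^k$ for arbitrary $k$, since reading the mirror turns a ``prefix by $0(10)^k$'' statement into a ``suffix by $(01)^k$'' statement. So both words can be padded toward the balanced alternating block, and the bridge will essentially be a string of alternating symbols.

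\textbf{The main obstacle}, and where irreducibility must be used, is the junction: when I glue $u$ (ending in some suffix) to the alternating block and then to $v$, the length-$\ell(\omega)$ windows straddling these seams must not create a forbidden word, i.e. must not produce a subword $\succcurlyeq \omega$. This is exactly the phenomenon exploited in Theorem \ref{critical}: for non-irreducible $\omega$ of the form $11(01)^k00\ldots$, concatenating $u=11$ with $v=(10)^{k+1}$ is impossible because \emph{every} candidate bridge forces an occurrence of $11(01)^{k+1} \succ \omega$. The definition of irreducibility (Definition \ref{irreducibleword}) is precisely what rules this out: it requires that for each prefix $\omega_k$ of $\omega$ ending in $1$, the periodic sequence $\omega_k^{\prime\prime\prime} = w_1\ldots w_k(\bar w_1\ldots w_{k-1}1)^\infty$ is strictly less than $\omega$. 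I expect this to guarantee that when a length-$\ell(\omega)$ window begins inside $u$ with a prefix $w_1\ldots w_k$ matching $\omega$ and then continues into the alternating bridge, the continuation is dominated by $\omega_k^{\prime\prime\prime} \prec \omega$, so no forbidden block appears. Verifying this seam condition carefully — checking every window position that overlaps both the tail of $u$ and the head of the bridge, and the symmetric condition at the bridge--$v$ junction — is the technical heart of the argument.

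\textbf{Finally,} to get the promised ``upper bound for the specification number,'' I would make the bridge length explicit rather than merely existential: the extensions of $u$ and $v$ to synchronising states should cost at most $\ell(\omega)$ symbols each, and the neutral block joining them should have bounded length depending only on $\ell(\omega)$. Tracking these lengths gives a uniform bound on $\ell(w)$ independent of $u$ and $v$, which both establishes transitivity and yields the gap bound needed in Section \ref{specificationsection}. The finite-type property from Proposition \ref{finitetype} is what makes ``no forbidden length-$\ell(\omega)$ window'' equivalent to full admissibility, closing the argument.
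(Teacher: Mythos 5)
There is a genuine gap, and it sits exactly at the step you yourself flag as the technical heart: the junction between $u$ and the alternating bridge. Your justification for suffixing $u$ by $(10)^k$ or $(01)^k$ rests on a symmetry that does not exist. The mirror image in this paper is the bit-flip $\bar x = (1-x_i)_{i=1}^{\infty}$, not a time reversal: the mirror of the statement ``$0(10)^k v$ is admissible'' is ``$1(01)^k \bar v$ is admissible,'' which is again a statement about \emph{prefixes}. The defining condition of $\Sigma_{\omega}$ constrains forward shifts $\sigma^n(x)$ and is not invariant under reversing words, and indeed the paper warns, in the paragraph preceding the theorem, that it is not always possible to concatenate a word ending with $1$ with $(01)^k$ on the right. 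This failure persists for irreducible $\omega$, so Definition \ref{irreducibleword} cannot rescue the construction: take $\omega = 11100111$, which is irreducible (one checks $\omega_k^{\prime\prime\prime} \prec \omega$ for $k = 1,2,3,6,7$), and $u = 111 = \omega_{\max_3}$, which is admissible. Then $u1$ contains $1^4$, forbidden by Proposition \ref{consecutive}, while every sequence beginning $11101$ exceeds $\omega$ (first difference at position $5$), so $u01$ is forbidden too; the only admissible continuation of $u$ begins with $00$. Hence no alternating padding of $u$ exists, of either parity. The same example exposes the flaw in your dominance claim: after a prefix $w_1 \ldots w_k$ of $\omega$, the sequence $\omega_k^{\prime\prime\prime}$ continues with $\bar w_1 \bar w_2 \ldots = 00\ldots$, which is lexicographically \emph{below} the alternating continuation $0101\ldots$, so windows running from the tail of $u$ into your bridge are not dominated by $\omega_k^{\prime\prime\prime}$, and the hypothesis $\omega_k^{\prime\prime\prime} \prec \omega$ gives no control over them.

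The paper's proof avoids this trap by building the bridge out of $\omega$'s own digits rather than alternating pairs. After normalising $u$ and $v$ via Lemma \ref{remark2} (your treatment of the $v$-side is fine), it locates the occurrences of $0^n$ or $1^n$ in $u$; when the corresponding suffix of $u$ is a prefix of the extremal word $\omega_{\min}$ (resp.\ $\omega_{\max}$), it is completed along $\omega_{\min}$ (resp.\ $\omega_{\max}$), and only then is the block $0^{n-1}$ (resp.\ $10^{n-1}$) appended --- the paper notes that $\omega_{\min}0^{n-1}$ is always admissible --- after which any admissible $v$ starting with $1$ may follow; irreducibility enters to produce, in the remaining case, the longest irreducible prefix $\omega_k$ of $\omega$ for which $\bar \omega_k 0^{n-1}$ is a legal connector. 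In the example above this means following $111$ by $00111\ldots$, the digits of $\omega$ itself, rather than by $0101\ldots$. To repair your argument you would need to replace the alternating padding of $u$ by such a completion mechanism; as written, your construction produces forbidden words.
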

\begin{proof}
It suffices to show that for any $u, v \in \mathcal{L}(\Sigma_{\omega})$ such that $\ell(u), \ell(v) \leq \ell(\omega)$ there exists $w$ such that $uwv \in \mathcal{L}(\omega)$. Note that by Lemma \ref{remark2} just need to consider words $u$ ending with $1$ and words $v$ starting with $1$. Moreover, Lemma \ref{remark2} also imply that we just need to consider $u, v \in \mathcal{L}(\sigma_{\omega})$ such that $\ell(u) = \ell(v) = \ell(\omega)$.

\vspace{1em}Let $\omega = w_1 \ldots w_{\ell(\omega)}$ be an irreducible word such that $\omega \in \left[1^n, 1^{n+1}\right)$ for some $n \geq 2$. Note that if the blocks $0^n$ or $1^n$ do not occur in $u$ then $u$ and $v$ can be connected by $0^{n-1}$. 

\vspace{1em}Let $u, v \in B_{\ell(\omega)}(\Sigma_{\omega})$ such that $0^n$ and $1^n$ do not occur simultaneously on $u$. Consider $$j = \min \{k \in \{0, \ldots \ell(\omega)-n \} \mid \sigma^k(u) = 0^n1 \ldots u_{\ell(\omega) - 1}1 \hbox{\rm{ or }} 1^n0 \ldots u_{\ell(\omega)-1} 1 \}.$$ Note that $\sigma^j(u) \in B_{\ell(\omega)-j}(\Sigma_{\omega})$. Suppose that $\sigma^j(u)$ starts with $0^n$. If $\sigma^j(u) \succ \omega_{\min_{\ell(\omega)-j}}$, then $0^{n-1}$ is a bridge between $u$ and $v$. If $\sigma^j(u) \preccurlyeq \omega_{\min_{\ell(\omega)-j}}$, let $z = z_1, \ldots, z_j$ be such that $z_i = \omega_{{\min_{\ell(\omega)}}_{j+i}}$ for $i \in \{1, \ldots , j\}$. Then $\sigma^j(u)z = \omega_{\min}$. Note that $[\omega_{\min}0^{n-1}]$ is always an admissible block. Then the word $w = z0^{n-1}$ is a bridge between $u$ and $v$. Observe that the argument is similar if $\sigma^j(u)$ starts with $1^n$. In such a case $z = z_1, \ldots, z_j$ will be such that $z_i = \omega_{{\max_{\ell(\omega)}}_{j+i}}$. Then the word $w = z10^{n-1}$ is a bridge between $u$ and $v$. If $0^n$ and $1^n$ occur simultaneously in $u$ and $u_{\ell(u) - j} \neq 1$ for every $j \in \{0, \ldots n_1\}$ consider $$j = \max \{j \in \{0, \ldots \ell(\omega)-2n \} \mid \sigma^k(u) = 0^n1 \ldots u_{\ell(\omega) - 1}1 \hbox{\rm{ or }} 1^n0 \ldots u_{\ell(\omega)-1} 1 \}.$$ Then we can construct $w$ in a similar way as explained above. Suppose $u_{\ell(u) - j} \neq 1$ for every $j \in \{0, \ldots n_1\}$. Then, let $$k = \mathop{\max}\limits_{1 \leq j < \ell(\omega)} \{ \omega_k = 1 \hbox{\rm{ and }}w_1, \ldots w_k \hbox{\rm{ is irreducible }}\}.$$ Let $\omega_k = w_1, \ldots w_k$. Note that such $k$ exists because $11(01)^{\infty} \leq \omega$. Observe that $\bar \omega_k^{n-1}$ is admissible and $1^n \bar \omega_k^{n-1}0^{n-1}$ is an admissible word. Then $\bar \omega_k0^{n-1}$ is a bridge from $u$ to $v$.
\end{proof}

Recall that a sub-shift $(\Sigma, \sigma_{\Sigma})$ is \textit{(topologically) mixing} if for every ordered pair of words $u, v \in \mathcal{L}$ there is $N \in \mathbb{N}$ such that for each $n \geq N$ there is a $w \in B_n(\Sigma)$, such that $uwv \in \mathcal{L}(\Sigma)$ \cite{lindmarcus}. As a consequence of \cite[Proposition 4.5.10 (4)]{lindmarcus} and \cite[Proposition 2.10, Proposition 2.16]{sidorov4} we can obtain \cite[Theorem 46]{nilsson1} which states that $(\Sigma_{\omega}, \sigma_{\omega})$ is mixing if $\omega$ is irreducible.

\subsection*{The Entropy function structure} Recall that a function $f:\left[a,b\right] \to \mathbb{R}$, where $\left[a, b\right]$ is a \textit{Cantor's function, singular function} or a \textit{devil's staircase} if $f$ satisfies the following properties: 
\begin{enumerate}
 \item $f$ is continuous;
 \item $f(a) < f(b)$ (or $f(a) > f(b)$);
 \item $f$ is non-decreasing (or non-increasing) on $\left[a,b\right]$;
 \item There exists a set $N$ of Lebesgue measure $0$ such that for all $x \in \left[a,b\right] \setminus N$ the derivative of $f$ in $x$ exists and is zero.
\end{enumerate}

For every $a \in \left[(10)^{\infty}, 1^{\infty}\right]$, it is natural to associate to each $a$ the topological entropy of the symmetric sub-shift $\Sigma_a$, as well as associate to each $a \in [\frac{1}{3}, \frac{1}{2}]$ the entropy of $f$ restricted to the attractor $\Lambda_a$. As a consequence of \cite[Theorem 4]{urbanski2} we assure that the entropy function is continuous. During this section we show that in fact, the entropy function associated to the attractor of the doubling map with a symmetric holes is a devil's staircase and we provide a characterisation of the end points of the entropy plateaus (Theorem \ref{plateaus2}). Moreover, we characterise the exceptional set $\mathcal{E}$ using approximations by sub-shifts of finite type given by $i$-irreducible words (Theorem \ref{exceptional1}). We say that a lexicographical interval $\left[a,b\right]$ is an \textit{entropy plateau}, if $h_{top}(\Sigma_c) = h_{top}(\Sigma_a)$ for every $c \in \left[a,b\right]$ and $h_{top}(\Sigma_c) \neq h_{top}(\Sigma_a)$ if $c \notin \left[a,b\right]$.

\newtheorem{interval1}[subsection]{Proposition}
\begin{interval1}
Let $\omega, \upsilon$ be finite sequences and suppose that $\upsilon \in (\omega, \omega^{\prime \prime \prime}).$ Then $(\upsilon, \upsilon^{\prime \prime \prime}) \subset (\omega, \omega^{\prime \prime \prime}).$ \label{interval1} 
\end{interval1}
\begin{proof}
Let $\omega, \upsilon$ be finite sequences satisfying $\upsilon \in (\omega, \omega^{\prime \prime \prime})$. It suffices to show that $\upsilon^{\prime \prime \prime} \prec \omega^{\prime \prime \prime}$. Note that $v_i = w_i$ for every $i \in \{1, \ldots, \ell(\omega)\}$. Firstly, suppose that $$\upsilon = w_1 \ldots w_{\ell(\omega)}(\bar w_1 \ldots w_{\ell(\omega)-1} 1)^k.$$ Then $\upsilon^{\prime \prime \prime }_{(k+2)\ell(\omega)} = 0$ and $\omega^{\prime \prime \prime}_{(k+2)\ell(\omega)} = 1$. Note that this also implies that $\upsilon^{\prime \prime \prime} \prec \omega_{k+1}$. Hence, $\upsilon^{\prime \prime \prime} \prec \omega^{\prime \prime \prime}$. 

Note that the sequence $\{ \omega_k \}_{k \geq 0}$ given by $$\omega_k = w_1 \ldots w_{\ell(\omega)}(\bar w_1 \ldots w_{\ell(\omega)-1} 1)^k$$ satisfies:
 
 \begin{enumerate} 
 \item $\omega_k \prec \omega_{k+1}$ for every $k \geq 0$ and $\mathop{\lim}\limits_{k \to \infty} \omega_k = \omega^{\prime \prime \prime}$;
 \item $(\omega_k,\omega_{k}^{\prime \prime \prime}) \cap (\omega_{k+1},\omega_{k+1}^{\prime \prime \prime}) = \emptyset.$
 \item $\omega^{\prime \prime} \in \left[\omega^{\prime}, \omega_2\right]$.
 \end{enumerate}
 
Suppose now that $\upsilon \in (\omega_k, \omega_{k+1})$ for some $k \geq 0$. It suffices to show that $\upsilon^{\prime \prime \prime} \leq \omega_{k+1}^{\prime \prime \prime}$. Note that $\ell(\upsilon) > \ell(\omega_k)$ and $v_i = w^k_i$ for every $i \in \{1, \ldots \ell(\omega_k)\}$ where $w^k_i$ is the $i$-th digit of $\omega_k$. In fact, $\ell(\upsilon) \geq \ell(\omega_k)+r$, where $r = \mathop{\min}\{1 \leq r \leq \ell(\omega) \mid  \omega_r = 0\}$. Let $$S = \{j \in \{\ell(\omega_k)+r \ldots \ell{\omega_{k+1}}\} \mid \omega^{k+1}_j = 1 \}.$$ 

If $\ell(\upsilon) > \ell(\omega_{k+1})$ then there exist $j \in S$ such that $\upsilon_j = 0$, then it is obvious that $\upsilon^{\prime \prime \prime} \prec \omega^{\prime \prime \prime}_{k+1}$. If $\ell(\upsilon) \leq \ell(\omega^{k+1})$ we have two cases to consider. If $\upsilon$ is not a truncated word: i.e. if there exists $j \in S$ such that $\upsilon_j = 1$, $\upsilon_{j+1} = 0$, $j+1 \in S$ and $\ell(\upsilon) > j+1$ then the result holds. Suppose that $\upsilon$ is a truncated word: i.e. there exist $j \in S$ such that $\upsilon_j = 1$ and $\ell(\upsilon) = j$. If $j+1 \in S$ then $\upsilon^{\prime \prime \prime} \prec \omega_{k+1}$. Suppose that $j+1 \notin S$. Then $\upsilon^{\prime \prime \prime} = \omega^{k+1}_1 \ldots \omega^{k+1}_j(\bar \omega^{k+1}_1 \ldots \bar \omega^{k+1}_{j-1}1)^{\infty}$. Observe that in this case, it suffices to show that $\bar \omega_1 \ldots \bar \omega_j \leq (\bar \omega_{j+1}\ldots \bar \omega_{\ell(\omega)} \bar \omega_1 \ldots \bar \omega_{j})^{\infty}$, which is a consequence of Theorem \cite[Theorem 1]{erdos}.
\end{proof}

For every $k \in \mathbb{N}$ consider $\omega_{a^*_k} = a^*_1 \ldots a^*_{2^k}$. A sequence $a$ is an \textit{$i$-sequence} if ${\omega_{a^*_{i+1}}}^{\prime \prime \prime} \prec a \preccurlyeq {\omega_{a^*_i}}^{\prime \prime \prime}$. For $i = 0$, $a$ is $0$-sequence if it satisfies that $\omega_{a^*_1} \prec a \prec 1^{\infty}$. A finite $i$-sequence $\omega$ is an \textit{$i$-irreducible} if for every $2^i < j < \ell(\omega)$ such that $w_j = 1$, $(w_1...w_j)^{\prime \prime \prime} \prec \omega$. Note that $0$-irreducible sequences are simply irreducible sequences.  

\newtheorem{powersoftwo}[subsection]{Lemma}
\begin{powersoftwo}
For every $k \in \mathbb{N}$, $$h_{top}(\Sigma_{\omega_{a^*_k}^{\prime \prime \prime}}) = \dfrac{1}{2^k}.$$\label{powersoftwo} 
\end{powersoftwo}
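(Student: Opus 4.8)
The plan is to read off $h_{top}$ as the exponential growth rate of the number of admissible words and to exploit the self-similarity $\omega_{a^*_{k+1}} = t(\omega_{a^*_k})$ through the Thue--Morse substitution $\mu\colon 0\mapsto 01,\ 1\mapsto 10$, proving the value $1/2^k$ by induction on $k$. Throughout I would use three elementary properties of $\mu$: it is order preserving for $\prec$, it commutes with the mirror map ($\mu(\bar x)=\overline{\mu(x)}$), and it satisfies $\sigma^2\circ\mu=\mu\circ\sigma$ together with $\mu(a^*)=\sigma(a^*)$ (the latter from $\mu(\mathrm{TM})=\mathrm{TM}$).

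First I would settle the base case $k=1$, where $\omega_{a^*_1}=11$ and $\omega_{a^*_1}^{\prime\prime\prime}=11(01)^\infty$. A direct lexicographic analysis shows that $\Sigma_{11(01)^\infty}$ forbids exactly the words $\{11(01)^i1:i\ge 0\}$ and their mirror images $\{00(10)^i0:i\ge 0\}$. By Proposition~\ref{consecutive} (with $n=2$) every run has length at most $2$, so I would pass to the run-length coding over $\{1,2\}$; the forbidden words then translate into the single rule that two consecutive runs of length $2$ must be separated by an even number of runs of length $1$. Weighting a run of length $\ell$ by $z^\ell$ gives the transfer matrix $M(z)=\left(\begin{smallmatrix} z^2 & z\\ z & 0\end{smallmatrix}\right)$, whose determinant $\det(I-M(z))=1-2z^2$ first vanishes at $z=2^{-1/2}$; hence the growth rate is $2^{1/2}$ and $h_{top}(\Sigma_{\omega_{a^*_1}^{\prime\prime\prime}})=\tfrac12$.

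For the inductive step I would first prove the lower bound $h_{top}(\Sigma_{\omega_{a^*_{k+1}}^{\prime\prime\prime}})\ge \tfrac12\,h_{top}(\Sigma_{\omega_{a^*_{k}}^{\prime\prime\prime}})$ by establishing the inclusion $\mu(\Sigma_{\omega_{a^*_k}^{\prime\prime\prime}})\subseteq \Sigma_{\omega_{a^*_{k+1}}^{\prime\prime\prime}}$. For even shifts this is immediate from $\sigma^{2m}\mu=\mu\sigma^m$ and order preservation, so the whole comparison reduces to the single inequality $\mu(\omega_{a^*_k}^{\prime\prime\prime})\preccurlyeq \omega_{a^*_{k+1}}^{\prime\prime\prime}$, which I would extract from $\mu(a^*)=\sigma(a^*)$ and the nested-interval structure of Proposition~\ref{interval1}; for odd shifts the extra leading symbol $\overline{(\sigma^m x)_1}$ together with the prefix $11$ of $\omega_{a^*_{k+1}}^{\prime\prime\prime}$ reduces the comparison back to the even case. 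Since $\mu$ is injective and length doubling, the subshift it generates has exactly half the entropy, giving the bound; iterating from the base case yields $h_{top}(\Sigma_{\omega_{a^*_k}^{\prime\prime\prime}})\ge 1/2^k$.

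The hard part is the matching upper bound, i.e.\ that \emph{every} admissible point at level $k+1$ comes from the substitution. Concretely I would prove a recognizability statement: any $x\in\Sigma_{\omega_{a^*_{k+1}}^{\prime\prime\prime}}$ can, after discarding an initial block of length less than $2^{k+1}$, be parsed uniquely into $\mu$-images $\mu(0)=01$, $\mu(1)=10$, and the desubstituted sequence lies in $\Sigma_{\omega_{a^*_k}^{\prime\prime\prime}}$. This gives $\Sigma_{\omega_{a^*_{k+1}}^{\prime\prime\prime}}\subseteq \overline{\bigcup_j\sigma^j\mu(\Sigma_{\omega_{a^*_k}^{\prime\prime\prime}})}$ and hence $h_{top}(\Sigma_{\omega_{a^*_{k+1}}^{\prime\prime\prime}})\le \tfrac12\,h_{top}(\Sigma_{\omega_{a^*_k}^{\prime\prime\prime}})$, closing the induction. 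I expect this recognizability/desubstitution step to be the main obstacle: one must rule out admissible words that locally imitate the period block $\bar w_1\cdots \bar w_{2^{k+1}-1}1$ but break the $\mu$-block alignment, and this is exactly where the overlap-free combinatorics of the Thue--Morse word $a^*$ and Proposition~\ref{interval1} are needed.
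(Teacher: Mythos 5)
Your renormalization strategy (induction on $k$ via the Thue--Morse substitution $\mu$) is genuinely different from the paper's argument, and its first half is sound: the transfer-matrix computation for $k=1$ is correct, and the inclusion $\mu(\Sigma_{\omega_{a^*_k}^{\prime\prime\prime}})\subseteq\Sigma_{\omega_{a^*_{k+1}}^{\prime\prime\prime}}$, hence the lower bound, can indeed be carried out using order preservation of $\mu$ and the identity $\sigma(\omega_{a^*_{k+1}}^{\prime\prime\prime})=\mu(\omega_{a^*_k}^{\prime\prime\prime})$. The gap is exactly where you predicted it, but it is worse than a technical obstacle: the recognizability statement you propose to prove is \emph{false}. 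A point of $\Sigma_{\omega_{a^*_{k+1}}^{\prime\prime\prime}}$ need not desubstitute into $\Sigma_{\omega_{a^*_k}^{\prime\prime\prime}}$, no matter how long an initial block is discarded. Already for $k=1$: take $z=(01)^m(0011)^{\infty}$ with $m$ large. One checks that every shift of $z$ lies strictly between $0010(1100)^{\infty}$ and $1101(0011)^{\infty}$, so $z\in\Sigma_{\omega_{a^*_2}^{\prime\prime\prime}}$; but the only viable $\mu$-parsings of $z$ (after discarding any bounded prefix) desubstitute to sequences of the form $1^{r}(01)^{\infty}$ with $r$ large, which contain $1^3$ and hence do not lie in $\Sigma_{11(01)^{\infty}}$ by Proposition \ref{consecutive}. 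Moreover $\mu$-images of points of $\Sigma_{11(01)^{\infty}}$ never contain the word $(01)^m$ for $m\geq 4$ (in either block alignment it would force $0^m$ or $1^{m-1}$ upstairs), so $z$ is not even in the closure $\overline{\bigcup_j\sigma^j\mu(\Sigma_{11(01)^{\infty}})}$; the claimed inclusion fails outright, not just pointwise.

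The structural reason is that, precisely because $\sigma(\omega_{a^*_{k+1}}^{\prime\prime\prime})=\mu(\omega_{a^*_k}^{\prime\prime\prime})$ and $\omega_{a^*_{k+1}}^{\prime\prime\prime}$ begins with $11$, the condition ``$\mu(y)$ and all its shifts lie strictly between the two bounding sequences at level $k+1$'' unravels to a \emph{one-sided, conditional} constraint at level $k$: whenever $y_m=0$ one needs $\sigma^{m}(y)\prec\omega_{a^*_k}^{\prime\prime\prime}$, and whenever $y_m=1$ one needs $\sigma^{m}(y)\succ\overline{\omega_{a^*_k}^{\prime\prime\prime}}$ --- never both at once. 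The desubstituted set is therefore a strictly larger set $Y\supsetneq\Sigma_{\omega_{a^*_k}^{\prime\prime\prime}}$ (it contains $1^r x$ for admissible $x$ and arbitrary $r$), and your induction does not close unless you additionally prove $h_{top}(Y)=h_{top}(\Sigma_{\omega_{a^*_k}^{\prime\prime\prime}})$. That statement is true and provable (an interior occurrence of a forbidden word of level $k$ forces a constant prefix, so $|B_n(Y)|$ exceeds $|B_n(\Sigma_{\omega_{a^*_k}^{\prime\prime\prime}})|$ by at most a factor polynomial in $n$), but it is a missing argument of essentially the same delicacy as the original problem. The paper sidesteps all of this with a direct, non-inductive count: by Theorem \ref{teo1} the points of $\Sigma_{\omega_{a^*_k}^{\prime\prime\prime}}$ in which the word $\omega_{a^*_k}$ never occurs form a countable, hence zero-entropy, set (since $\omega_{a^*_k}\prec a^*$), while after any occurrence of $\omega_{a^*_k}$ the sequence is forced except for one free binary choice every $2^k$ symbols, giving $|B_n|$ of order $2^{n/2^k}$ and so $h_{top}=1/2^k$ immediately.
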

\begin{proof}
Let $k \in \mathbb{N}$. Recall that $\omega_{a^*_k} \prec a^*$. By Theorem \ref{teo1}, $\Sigma_{\omega_{a^*_k}}$ is a countable set. Nonetheless $a^* \prec \omega_{a^*_k}^{\prime \prime \prime}$, then $h_{top}(\Sigma_{\omega_{a^*_k}^{\prime \prime \prime}}) > 0$. Henceforth the entropy of $\Sigma_{\omega_{a^*_k}^{\prime \prime \prime}}$ will be given by $$A = \{x \in \Sigma_{\omega_{a^*_k}^{\prime \prime \prime}} \mid \omega_{a^*_k} \hbox{\rm{ occurs in }} x \}.$$ Take $x \in A$. Without loss of generality it can be assumed that $x_i = \omega_{a^*_k}i$ for every $i \in \{1 \ldots 2^k\}.$ Note that $x_i = \bar \omega_{a^*_n}i$ for every $i \in \{2^k + 1, \ldots, 2^{n+1}-1\}$ and $x_{2^{k+1}}$ can be chosen. This implies that $ |B_n(A)| = 2^{\frac{n}{2^k}}$ for every $n \geq 2^k$, which in turn implies that $$h_{top}(\Sigma_{\omega_{a^*_k}^{\prime \prime \prime}}) = \dfrac{1}{2^k}.$$ 
\end{proof}

In particular $h_{top}(\Sigma_a) = \frac{1}{2}$ if $a = 11(01)^{\infty}$. Observe that from Lemma \ref{powersoftwo}, given $i \in \mathbb{N}$ we find that a particular infinite sequences satisfy that $h_{top}(\Sigma_a) = \frac{1}{2^i}$. Then it is natural to ask if there is a finite $i$-sequence $\omega$ such that $\Sigma_{\omega}$ satisfies the same property. However, we will show that this is not possible. This fact is a consequence of Proposition \ref{exceptional1} and Theorem \ref{plateaus2} below. The following Lemma gives an estimate for the length of $i$-irreducible words.
 
\newtheorem{entropyintervals}[subsection]{Lemma}
\begin{entropyintervals}
Let $\omega$ be an $i$-irreducible sequence. If $$\dfrac{1}{2^{i+1}} < h_{top}(\Sigma_{\omega}) < \dfrac{1}{2^{i}},$$ then $\ell(\omega) \geq 3\cdot2^{i}$. \label{entropyintervals}
\end{entropyintervals}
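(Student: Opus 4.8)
My plan is to pin down the longest common prefix of the two endpoint sequences $\omega_{a^*_i}^{\prime\prime\prime}$ and $\omega_{a^*_{i+1}}^{\prime\prime\prime}$ and then to trap $\omega$ between them by a lexicographic argument. The role of the entropy hypothesis is only to place $\omega$ strictly between these endpoints: since $a \prec b$ implies $\Sigma_a \subset \Sigma_b$, the topological entropy is non-decreasing in the parameter, so by Lemma \ref{powersoftwo} the bound $h_{top}(\Sigma_{\omega}) > \frac{1}{2^{i+1}} = h_{top}(\Sigma_{\omega_{a^*_{i+1}}^{\prime\prime\prime}})$ forces $\omega \succ \omega_{a^*_{i+1}}^{\prime\prime\prime}$, and likewise $h_{top}(\Sigma_{\omega}) < \frac{1}{2^{i}} = h_{top}(\Sigma_{\omega_{a^*_{i}}^{\prime\prime\prime}})$ forces $\omega \prec \omega_{a^*_{i}}^{\prime\prime\prime}$. (These two inequalities hold anyway, since an $i$-irreducible word is by definition an $i$-sequence.)

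First I would record the self-similar structure of $a^* = \lim_{n} t^{n}(11)$. Writing $\omega_{a^*_k} = t^{k-1}(11)$, which has length $2^{k}$, the definition of $t$ yields $a^*_{2^{k}} = 1$ for every $k \geq 1$ and $a^*_{2^{i}+j} = \overline{a^*_{j}}$ for $1 \leq j \leq 2^{i}-1$. Setting $P = \overline{a^*_{1}} \cdots \overline{a^*_{2^{i}-1}} 1$ and $Q = \overline{a^*_{1}} \cdots \overline{a^*_{2^{i+1}-1}} 1$, of lengths $2^{i}$ and $2^{i+1}$, one then has $\omega_{a^*_{i}}^{\prime\prime\prime} = a^*_{1} \cdots a^*_{2^{i}}\, P^{\infty}$ and $\omega_{a^*_{i+1}}^{\prime\prime\prime} = a^*_{1} \cdots a^*_{2^{i+1}}\, Q^{\infty}$.

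The heart of the proof is a coordinatewise comparison of these two sequences. Using the two identities above, they agree on the first $2^{i+1}$ coordinates (the first copy of $P$ reproduces $a^*_{2^{i}+1} \cdots a^*_{2^{i+1}}$); on coordinates $2^{i+1}+1, \dots, 3\cdot 2^{i}-1$ the second copy of $P$ and the opening $2^{i}-1$ symbols of $Q$ again coincide; but at coordinate $3\cdot 2^{i}$ the first sequence shows $P_{2^{i}} = 1$ whereas the second shows $Q_{2^{i}} = \overline{a^*_{2^{i}}} = 0$. Hence the longest common prefix has length exactly $3\cdot 2^{i}-1$. I would also record that one step further, at coordinate $3\cdot 2^{i}+1$, the sequence $\omega_{a^*_{i+1}}^{\prime\prime\prime}$ has $Q_{2^{i}+1} = \overline{a^*_{2^{i}+1}} = a^*_{1} = 1$. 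These three facts are the crux.

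It then remains to run the squeeze. Since $\omega_{a^*_{i+1}}^{\prime\prime\prime} \prec \omega \preccurlyeq \omega_{a^*_{i}}^{\prime\prime\prime}$ and the two bounds share a prefix of length $3\cdot 2^{i}-1$, the sequence $\omega 0^{\infty}$ must agree with that prefix as well (a first deviation below it would push $\omega$ under the lower bound, a first deviation above it over the upper bound). Suppose for contradiction that $\ell(\omega) \leq 3\cdot 2^{i}-1$; then $\omega 0^{\infty}$ carries $0$ in both coordinates $3\cdot 2^{i}$ and $3\cdot 2^{i}+1$, so it matches $\omega_{a^*_{i+1}}^{\prime\prime\prime}$ through coordinate $3\cdot 2^{i}$ (where that sequence also has $0$) and is strictly smaller at coordinate $3\cdot 2^{i}+1$ (where it has $1$). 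Thus $\omega 0^{\infty} \prec \omega_{a^*_{i+1}}^{\prime\prime\prime}$, contradicting the lower bound; therefore $\ell(\omega) \geq 3\cdot 2^{i}$. I expect the only genuinely delicate part to be the coordinate bookkeeping of the comparison step — keeping straight which copy of $P$, and which part of the initial block versus the period $Q$, each coordinate belongs to — since once the three facts above are verified the squeeze is immediate. Notably the argument uses only that $\omega$ is an $i$-sequence, namely $\omega_{a^*_{i+1}}^{\prime\prime\prime} \prec \omega \preccurlyeq \omega_{a^*_{i}}^{\prime\prime\prime}$ (which the two entropy inequalities supply via Lemma \ref{powersoftwo}); the intermediate $i$-irreducibility conditions play no further role here.
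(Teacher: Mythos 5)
Your proof is correct and follows essentially the same route as the paper's: both use Lemma \ref{powersoftwo} together with monotonicity of entropy in the parameter to trap $\omega$ strictly between $\omega_{a^*_{i+1}}^{\prime\prime\prime}$ and $\omega_{a^*_{i}}^{\prime\prime\prime}$, observe that these two sequences agree in exactly their first $3\cdot 2^{i}-1$ symbols, and then squeeze a hypothetical shorter $\omega$ into a lexicographic contradiction. If anything, your write-up is more careful than the paper's (whose exhibited witness $\omega_{a^*_{i+1}}(\bar \omega_{a^*_{i+1}})^2$ has length $3\cdot 2^{i+1}$ and in fact lies below the lower endpoint, and whose final contradiction is stated with garbled indices); your explicit verification of the common prefix via $a^*_{2^k}=1$ and $a^*_{2^i+j}=\overline{a^*_j}$, and the comparison at coordinate $3\cdot 2^{i}+1$ against the lower bound, supply exactly the bookkeeping the paper leaves implicit.
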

\begin{proof}
Let $i \geq 0$. By Lemma \ref{powersoftwo} we obtain that $\omega_{a^*_{i+1}}^{\prime \prime \prime} \prec \omega \prec \omega_{a^*_i}^{\prime \prime \prime}$. Consider $a = \omega_{a^*_{i+1}}(\bar \omega_{a^*_{i+1}})^2$. Note that $\omega_{a^*_{i+1}}^{\prime \prime \prime} \prec a \prec \omega_{a^*_i}^{\prime \prime \prime},$ and $\ell(a) = 3\cdot2^i$. 

Suppose that there exist a finite word $\omega$ such that $\omega_{a^*_{i+1}}^{\prime \prime \prime} \prec \omega \prec \omega_{a^*_i}^{\prime \prime \prime},$ and $\ell(a) > \ell(\omega)$. Note that the first $3\cdot2^i-1$ symbols of $\omega_{a^*_{i+1}}^{\prime \prime \prime}$ and $\omega_{a^*_i}^{\prime \prime \prime}$ coincide, therefore, $w_1 \ldots w_{2^{i+1}} = a_1 \ldots a_{i+1}$. This implies that there exist $j \in \{2^{i+1} \ldots 3\cdot2^i\}$ such that $(\omega_{a^*_{n-1}}^{\prime \prime \prime})_j = 0 $ and $w_j = 1$, therefore, $\omega \succ \omega_{a^*_i}^{\prime \prime \prime}$, which is a contradiction. 
\end{proof}

\newtheorem{irreducibles1}[subsection]{Lemma}
\begin{irreducibles1}
If $\omega, \upsilon$ are finite $i$-irreducible sequences and $\omega \neq \upsilon$, then $(\omega, \omega^{\prime \prime \prime}) \cap (\upsilon, \upsilon^{\prime \prime \prime}) = \emptyset.$ \label{irreducibles1}
\end{irreducibles1}
\begin{proof}
Let $\omega = w_1 \ldots w_{\ell(\omega)}$ and $\upsilon = v_1 \ldots v_{\ell(\upsilon)}$ be $i$-irreducible words with $i \geq 0$. Without generality suppose that $\omega \prec \upsilon$. It is already assumed $(\omega, \omega^{\prime \prime \prime}) \cap (\upsilon, \upsilon^{\prime \prime \prime}) \neq \emptyset.$ Then, by Proposition \ref{interval1}, $(\upsilon, \upsilon^{\prime \prime \prime}) \subset (\omega, \omega^{\prime \prime \prime}).$ This implies that $\ell(\upsilon) \geq \ell(\omega)$ and that $v_j = w_j$ for every $j \in \{1, \ldots , \ell(\omega) \}$. By the $i$-irreducibility of $\upsilon$, $\omega^{\prime \prime \prime} \prec \upsilon$, which is a contradiction.
\end{proof}

\newtheorem{irreducibles2}[subsection]{Lemma}
\begin{irreducibles2}
For every finite sequence $\upsilon \succ a^*$ there exist an unique $i$-irreducible sequence $\omega$ such that $\upsilon \in [\omega, \omega^{\prime \prime \prime}]$. \label{irreducibles2}
\end{irreducibles2}
\begin{proof}
Since Proposition \ref{powersoftwo} gives us a partition of the lexicographic interval $[a^*, 1^{\infty}]$ in terms of the entropy function,  $\upsilon$ is an $i$-sequence for some $i \geq 0$. If $\upsilon$ is an $i$-irreducible sequence then $\omega = \upsilon$ satisfies the conclusion. Let $\upsilon$ be an $i$-sequence for some $i \geq 0$ such that it is not $i$-irreducible. Then there exists $2 < j < \ell(\upsilon)$ such that $\upsilon_j^{\prime \prime \prime} \geq \upsilon$, where $\upsilon_j = v_1 \ldots v_j$ and $v_j = 1$. By Lemma \ref{interval1}, $\upsilon \in (\upsilon_j, \upsilon_j^{\prime \prime \prime})$. Therefore, if $\upsilon_j$ is $i$-irreducible then $\omega = \upsilon_j$. Suppose that for every $2 < j < \ell(\upsilon)$, $\upsilon_j$ is not $i$-irreducible. Note that $v_{2^{i+1}}=1$ because if $v_{2^{i+1}} = 0$ then $\upsilon$ will not be an $i$ sequence. Indeed $v_1 \ldots v_{2^{i+1}}= \omega_{a^*_{i+1}}$. This implies that $\upsilon \prec (v_1\ldots v_{2^{i+1}})^{\prime \prime \prime}$, which contradicts that $\upsilon$ is an $i$-sequence. The uniqueness of such $i$-irreducible sequence $\omega$ is given by Lemma \ref{irreducibles1}.
\end{proof}

The following results show that the condition for being an entropy plateau is satisfied by intervals of the form $[\omega, \omega^{\prime \prime \prime}]$. For this purpose, we show that every symmetric sub-shift has a unique transitive component of maximal entropy. Given a dynamical system $(X,f)$, a subset $A$ of $X$ is a \textit{transitive component} if $A$ is closed, completely invariant (i.e. $f^{-1}(A) = A = f(A)$), $f\mid_A: A \to A$ is topologically transitive and there is no other set $A^{\prime}$ such that $A \subsetneq A^{\prime}$ containing a dense orbit \cite[p. 1313]{bundfuss}. In \cite[Theorem 6.3]{bundfuss} it is showed that every interval exclusion system has at most $4k$ transitive components, where $k$ is the number of holes (considering each hole as a connected subset). They also show in \cite[Theorem 6.4]{bundfuss} that if the exclusion sub-shift is a shift of finite type then it has at most $2k$ components. Observe that we can consider the family of symmetric sub-shifts as exclusion sub-shifts of the doubling map (see Section \ref{exclusion}). Moreover, by \cite[Theorem 6.4]{bundfuss} we assure that there exists a subset $A$ of $\Sigma_{\omega}$ such that $A$ is a transitive component of maximal entropy. 

\newtheorem{controlentropy}[subsection]{Lemma}
\begin{controlentropy}
Let $a \in[a^*, 1^{\infty}]$ and $\omega$ an $i$-irreducible finite sequence for $i \in \mathbb{N}$. If $a \in (\omega, \omega^{\prime \prime \prime}]$, then $h_{top}(A) \leq \frac{1}{\ell(\omega)} < h_{top}(\Sigma_{\omega})$, where $A$ is any transitive component of $\Sigma_a$ such that $h_{top}(A) < h_{top}(\Sigma_a)$.\label{controlentropy}
\end{controlentropy}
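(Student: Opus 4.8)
The plan is to establish the two inequalities in the conclusion separately: the strict inequality $\frac{1}{\ell(\omega)} < h_{top}(\Sigma_\omega)$, and the bound $h_{top}(A) \le \frac{1}{\ell(\omega)}$ for any transitive component $A$ with $h_{top}(A) < h_{top}(\Sigma_a)$. For the first inequality I would use that, being an $i$-sequence, $\omega$ begins with the block $\omega_{a^*_{i+1}}$ of length $2^{i+1}$ (as established in the proof of Lemma \ref{irreducibles2}); since we assume $\omega \succ a^* \succ \omega_{a^*_{i+1}}$, the word $\omega$ cannot equal $\omega_{a^*_{i+1}}$, so $\ell(\omega) > 2^{i+1}$. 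On the other hand $\omega \succ \omega_{a^*_{i+1}}^{\prime\prime\prime}$ gives $\Sigma_{\omega_{a^*_{i+1}}^{\prime\prime\prime}} \subseteq \Sigma_\omega$, whence by monotonicity of the entropy and Lemma \ref{powersoftwo}, $h_{top}(\Sigma_\omega) \ge h_{top}(\Sigma_{\omega_{a^*_{i+1}}^{\prime\prime\prime}}) = \frac{1}{2^{i+1}}$. Combining, $\frac{1}{\ell(\omega)} < \frac{1}{2^{i+1}} \le h_{top}(\Sigma_\omega)$, as required.

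For the bound on the secondary components, set $\omega_{\max} = \omega_1\ldots\omega_{\ell(\omega)-1}0$ and recall that $a \preccurlyeq \omega^{\prime\prime\prime} = \omega\,(\overline{\omega_{\max}})^{\infty}$, so that $\Sigma_\omega \subseteq \Sigma_a \subseteq \Sigma_{\omega^{\prime\prime\prime}}$. Let $R$ be the closed, shift-invariant subset of $\Sigma_a$ consisting of those points which, from some coordinate onward, are concatenations of the two length-$\ell(\omega)$ blocks $\omega_{\max}$ and $\overline{\omega_{\max}}$ (together with all their shifts). The key point is that once the maximal block of $\Sigma_{\omega^{\prime\prime\prime}}$ appears, the constraint $\sigma^n(x)\prec\omega^{\prime\prime\prime}$ forces the following $\ell(\omega)-1$ symbols up to a single binary choice, exactly as in the counting carried out in the proof of Lemma \ref{powersoftwo}; reading in aligned windows of length $\ell(\omega)$ this yields
$$\left|B_{m\ell(\omega)}(R)\right| \le \ell(\omega)\,2^{m+1},$$
and hence $h_{top}(R)\le\frac{1}{\ell(\omega)}$. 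It therefore suffices to prove that every transitive component $A$ with $h_{top}(A) < h_{top}(\Sigma_a)$ is contained in $R$.

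I would prove this last containment by contraposition, using the transitivity of $\Sigma_\omega$. By \cite[Theorem 6.4]{bundfuss} there are finitely many transitive components and a transitive component of maximal entropy, so $h_{top}(\Sigma_a)$ is realised on one of them. Let $A^{\circ}$ denote the component containing the transitive set $\Sigma_\omega$ (transitive by Theorem \ref{transitivity} for $i=0$, and by its renormalised analogue for $i\ge1$); then $h_{top}(A^{\circ})\ge h_{top}(\Sigma_\omega) > \frac{1}{\ell(\omega)}$. Now suppose a component $A$ contains a point that is not eventually a concatenation of $\omega_{\max}$ and $\overline{\omega_{\max}}$; then some window of that point is one of the ``interior'' admissible words of $\Sigma_\omega$ which, by the explicit bridge construction of Theorem \ref{transitivity} (via Lemma \ref{remark2} and Proposition \ref{interval1}), can be joined in both directions to every word of $\mathcal{L}(\Sigma_\omega)$. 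Transitivity and complete invariance of $A$ then force $\mathcal{L}(\Sigma_\omega)\subseteq\mathcal{L}(A)$, hence $\Sigma_\omega\subseteq A$, and by maximality of transitive components $A=A^{\circ}$. Contrapositively, any $A\ne A^{\circ}$ meets no interior word and so $A\subseteq R$, giving $h_{top}(A)\le\frac{1}{\ell(\omega)}$; in particular $A^{\circ}$ is the unique component of entropy exceeding $\frac{1}{\ell(\omega)}$, so $h_{top}(\Sigma_a)=h_{top}(A^{\circ})$ and any $A$ with $h_{top}(A)<h_{top}(\Sigma_a)$ satisfies $A\ne A^{\circ}$, whence $h_{top}(A)\le\frac{1}{\ell(\omega)}$.

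The main obstacle is the absorption step: showing rigorously that a component meeting any interior word of $\Sigma_\omega$ must contain all of $\mathcal{L}(\Sigma_\omega)$, i.e. that the bridges produced in the proof of Theorem \ref{transitivity} can be realised without leaving $A$. I expect this to require re-running that bridge construction inside $A$ and checking that each bridge remains admissible relative to $a$ rather than merely relative to $\omega$; for $i\ge1$ I would first renormalise $\omega$ into a $0$-irreducible word over the block alphabet $\{\omega_{a^*_i},\overline{\omega_{a^*_i}}\}$, reducing the combinatorics to the already-treated case $i=0$.
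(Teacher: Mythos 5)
Your first inequality is correct and is essentially the paper's own argument: the paper combines Lemma \ref{powersoftwo} with the length bound of Lemma \ref{entropyintervals}, while you derive $\ell(\omega)>2^{i+1}$ directly from the fact that an $i$-sequence begins with $\omega_{a^*_{i+1}}$; both yield $\frac{1}{\ell(\omega)}<\frac{1}{2^{i+1}}\leq h_{top}(\Sigma_{\omega})$.

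The second half has a genuine gap, located exactly at what you flagged as the main obstacle (the absorption step), and it cannot be repaired as written because your rigid set $R$ is the wrong set. What the condition $a\preccurlyeq\omega^{\prime\prime\prime}$ forces after an occurrence of $\omega$ in a point of $\Sigma_a$ is: the next $\ell(\omega)-1$ symbols are $\bar\omega_1\cdots\bar\omega_{\ell(\omega)-1}$ and only the symbol after them is free; if that free symbol is $1$ the same forcing repeats, while if it is $0$ an occurrence of $\bar\omega$ has been created and the forcing repeats with $\omega$ and $\bar\omega$ exchanged. Hence the tail is an arbitrary concatenation of the two blocks $0\omega_1\cdots\omega_{\ell(\omega)-1}$ and $1\bar\omega_1\cdots\bar\omega_{\ell(\omega)-1}$ (blocks beginning at the free digits), equivalently a concatenation of the four blocks $\omega$, $\omega_{\max}$, $\bar\omega$, $\overline{\omega_{\max}}$ subject to an alternation rule; this coincides with concatenations of $\{\omega_{\max},\overline{\omega_{\max}}\}$ only in the degenerate case $\ell(\omega)=2$ --- you picked the wrong cyclic rotation of the blocks. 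Concretely, let $\omega=111$, $a=\omega^{\prime\prime\prime}=111(001)^{\infty}$, and $x=(111\,001\,000)^{\infty}$. Checking the nine shifts of $x$ against $a$ and $\bar a$ shows $x\in\Sigma_a$; since $\omega$ occurs in $x$ we have $x\notin\Sigma_{\omega}$; and no shift of $x$ parses into the blocks $\{110,001\}$, so $x\notin R$. Your dichotomy would then force the component $A$ containing $x$ to absorb $\Sigma_{\omega}$, but that is impossible: once $\omega$ occurs in a point of $\Sigma_a$ its entire tail is forced as above, so a dense orbit in a set containing both the orbit of $x$ and $\Sigma_{\omega}$ would have to display every word of the infinite language $\mathcal{L}(\Sigma_{\omega})$ before its first occurrence of $\omega$. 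Thus ``joined in both directions'' is precisely what fails; $A$ is a transitive component with $h_{top}(A)\leq\frac{1}{3}<h_{top}(\Sigma_a)$ that is neither contained in $R$ nor equal to $A^{\circ}$, so your argument never bounds its entropy, and the pattern $(\omega\,\overline{\omega_{\max}}\,\bar\omega)^{\infty}$ produces such components for every defining word of length at least $3$. The paper avoids all of this by never invoking a main component: it notes that every point of such a component contains $\omega$ or $\bar\omega$, after which the forcing above gives $|B_n(A)|\leq C\cdot 2^{n/\ell(\omega)}$, hence $h_{top}(A)\leq\frac{1}{\ell(\omega)}$ by direct counting. (A further soft spot: for $i\geq 1$ the set $\Sigma_{\omega}$ is not transitive, by Theorem \ref{critical}, so ``the component containing the transitive set $\Sigma_{\omega}$'' is not well defined; the paper's counting argument needs no such object.)
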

\begin{proof}
Let $\omega = \omega_1 \ldots \omega_{\ell(\omega)}$ be an $i$-irreducible sequence and $a \in (\omega, \omega^{\prime \prime \prime}]$. By Proposition \ref{consecutive} $a \in (1^n, 1^{n+1}]$ for some $n \geq 2$. Note that $\Sigma_{\omega} \subset \Sigma_a$ and $$(\Sigma_a \setminus \Sigma_{\omega}) = \{x \in \Sigma_a \mid \omega \hbox{\rm{ or }} \bar \omega \hbox{\rm{ occurs in }} x\}.$$ Consider $A \subset (\Sigma_a \setminus \Sigma_{\omega})$, a transitive component given by \cite[Theorem 6.3]{bundfuss}. Since $A$ is a transitive component then $A$ must be $\sigma_a$-invariant. Therefore, it is only required to consider words such that $\omega$ or $\bar \omega$ occurs in the first $\ell(\omega)$ positions. Furthermore, $|B_n(A)| \geq 0$ if $n \geq \ell(\omega)$, otherwise is zero. Moreover, $|B_n(A)| \geq 2$ for $n > \ell(\omega)$ and $|B_{\ell(\omega)}(A)| = 2$. Then, for every $1 \leq i < \omega$, if $u \in B_{\ell(\omega)+i}$ the $\ell(\omega)+j$ entry is fixed for every $j \leq i$ because $a \preccurlyeq \omega^{\prime\prime\prime}$. Indeed, if $u$ starts with $\omega$ then $u_{\ell(\omega)+i} = \bar w_i$. Hence $|B_{\ell(\omega)+i}| \leq 2^{\frac{n}{\ell(\omega)}}$. Notice that the digit $u_{2\ell(\omega)}$ is can be chosen from $0$ or $1$. Then $|B_{2\ell(\omega)}(A)| = 4$.  Therefore, $|B_n(A)| \leq 2^{\frac{n}{\ell (\omega)}}$, which implies that $$h_{top}(A) =  \mathop{\lim}\limits_{n \to \infty} \dfrac{1}{n} \log |B_n(A)| \leq \dfrac{1}{\ell(\omega)}.$$ Observe that by Lemmas \ref{powersoftwo} and \ref{entropyintervals} we can concluded that $h_{top}(A) < h_{top}(\Sigma_{\omega}).$
\end{proof}

\newtheorem{component}[subsection]{Theorem}
\begin{component}
Let $i \in \mathbb{N}$. Then, for every $i$-irreducible sequence $\omega$ and $a \in [\omega, \omega^{\prime \prime \prime}]$, $\Sigma_{a}$ contains a unique transitive component $A$ such that $h_{top}(A) = h_{top}(\Sigma_{\omega})$.\label{component}
\end{component}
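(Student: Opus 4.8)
The plan is to exhibit the transitive component $A$ concretely as the set of points whose orbit is forced to spend time near the blocks $\omega$ and $\bar\omega$, and then to show both that it carries entropy $h_{top}(\Sigma_\omega)$ and that it is the unique such component. First I would recall the two structural facts already available: by Theorem~\ref{transitivity} the sub-shift $\Sigma_\omega$ is itself transitive (being $i$-irreducible, hence irreducible in the relevant sense), and by \cite[Theorem 6.4]{bundfuss} the shift of finite type $\Sigma_a$ has only finitely many transitive components, among which there is one of maximal entropy. The natural candidate for $A$ is the full sub-shift $\Sigma_\omega$ itself, viewed inside $\Sigma_a$: since $\omega \prec a$ we have $\Sigma_\omega \subset \Sigma_a$, and $\Sigma_\omega$ is closed, $\sigma$-invariant and transitive, so it is contained in some transitive component $A$ of $\Sigma_a$.

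The key inequality is to show $h_{top}(A) = h_{top}(\Sigma_\omega)$ rather than merely $\geq$. For this I would invoke Lemma~\ref{controlentropy}: any transitive component $A'$ of $\Sigma_a$ with $h_{top}(A') < h_{top}(\Sigma_a)$ satisfies $h_{top}(A') \leq \frac{1}{\ell(\omega)} < h_{top}(\Sigma_\omega)$. The strategy is to argue by contradiction. Suppose the component $A \supseteq \Sigma_\omega$ strictly contains $\Sigma_\omega$; then $A$ contains a point in which $\omega$ or $\bar\omega$ occurs, placing $A$ (or the relevant part of it) inside $\Sigma_a \setminus \Sigma_\omega$, whose transitive components are exactly the low-entropy pieces controlled by Lemma~\ref{controlentropy}. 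That forces $h_{top}(A) \leq \frac{1}{\ell(\omega)} < h_{top}(\Sigma_\omega) \leq h_{top}(A)$, a contradiction. Hence $A = \Sigma_\omega$ up to the identification of components and $h_{top}(A) = h_{top}(\Sigma_\omega)$. For the case $a = \omega^{\prime\prime\prime}$ the endpoint must be handled separately, checking that adjoining the limiting sequence does not raise the entropy, which again follows from the block-counting estimate $|B_n(A)| \leq 2^{n/\ell(\omega)}$ established in Lemma~\ref{controlentropy}.

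Uniqueness is then the combinatorial heart: I would show that any transitive component $A'$ of $\Sigma_a$ with $h_{top}(A') = h_{top}(\Sigma_\omega)$ must coincide with $\Sigma_\omega$. The argument is that $h_{top}(\Sigma_\omega) > \frac{1}{\ell(\omega)}$ (by Lemmas~\ref{powersoftwo} and \ref{entropyintervals}, exactly as in Lemma~\ref{controlentropy}), so $A'$ cannot be one of the low-entropy components living in $\Sigma_a \setminus \Sigma_\omega$. Every transitive component either sits inside $\Sigma_\omega$ or meets $\Sigma_a \setminus \Sigma_\omega$; the latter possibility is excluded by the entropy bound, so $A'$ is a transitive subsystem of $\Sigma_\omega$ of full entropy, and by maximality and the transitivity of $\Sigma_\omega$ itself (Theorem~\ref{transitivity}) we conclude $A' = \Sigma_\omega = A$.

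The main obstacle I anticipate is making rigorous the claim that the transitive components meeting $\Sigma_a \setminus \Sigma_\omega$ are precisely the pieces to which Lemma~\ref{controlentropy} applies, and that no ``hybrid'' component straddles $\Sigma_\omega$ and its complement with entropy equal to $h_{top}(\Sigma_\omega)$. This requires a careful use of the definition of transitive component (complete invariance together with maximality among orbit-dense closed sets) to ensure that a point whose orbit enters the region where $\omega$ occurs cannot be transitively connected back into $\Sigma_\omega$ without forcing forbidden blocks; the bound $a \preccurlyeq \omega^{\prime\prime\prime}$ is exactly what prevents such a connection, since it fixes the digits following an occurrence of $\omega$ and thereby traps the orbit in the low-entropy regime. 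Verifying that the finitely many components of \cite[Theorem 6.4]{bundfuss} split cleanly into $\Sigma_\omega$ and these trapped low-entropy pieces is where the delicate symbolic bookkeeping lies.
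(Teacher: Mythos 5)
There is a genuine gap, and it sits at the very foundation of your argument: you invoke Theorem~\ref{transitivity} to claim that $\Sigma_\omega$ is itself transitive, ``being $i$-irreducible, hence irreducible in the relevant sense.'' That implication is false in precisely the regime this theorem addresses. The statement is for $i \in \mathbb{N}$, i.e.\ $i \geq 1$ (the paper treats $i=0$ separately, and $0$-irreducible words are the irreducible ones to which Theorem~\ref{transitivity} applies). An $i$-irreducible word with $i \geq 1$ only satisfies the condition $(w_1\ldots w_j)^{\prime\prime\prime} \prec \omega$ for indices $j > 2^i$; in particular the index $j=2$ is exempt, and indeed such a word has the form $\omega = 11(01)^k001\ldots$ with $k \geq 1$, so $\omega \prec 11(01)^{\infty}$ and $(11)^{\prime\prime\prime} = 11(01)^{\infty} \succ \omega$. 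By Theorem~\ref{critical}, $\Sigma_\omega$ is then \emph{not} transitive. Consequently your candidate $A = \Sigma_\omega$ is not a transitive component at all, and your conclusion ``$A = \Sigma_\omega$'' is incorrect: the unique component of maximal entropy is a \emph{proper} subset of $\Sigma_\omega$. What you have essentially proved is the $\omega \succ 11(01)^{\infty}$ case, which the paper records only as a remark after the theorem; the theorem itself is about the non-transitive case.

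The missing idea is the internal decomposition of the non-transitive shift $\Sigma_\omega$. The paper takes the obstruction from the proof of Theorem~\ref{critical} --- the admissible blocks $u = (10)^{k-1}11$ and $v = (10)^k$ which cannot be bridged --- and uses them to split $\Sigma_\omega$ into: the set $X$ of points in which $u$, $v$, $\bar u$ or $\bar v$ occurs; the zero-entropy component $A_1 = \{(01)^{\infty},(10)^{\infty}\}$ (the only invariant part of the $v$-locus); and the main component $A_2$, a sub-shift of finite type carrying $h_{top}(\Sigma_\omega) = \max\{h_{top}(A_1), h_{top}(A_2)\}$. Uniqueness then follows because any other candidate component would consist of points in which $u$ or $\bar u$ occurs, and orbits of such points eventually leave $X$, so such a set cannot be $\sigma$-invariant. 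Your use of Lemma~\ref{controlentropy} to control the components created by passing from $\omega$ to $a \in (\omega,\omega^{\prime\prime\prime}]$ is in the right spirit and would complement this, but without the decomposition inside $\Sigma_\omega$ the argument does not get off the ground: one cannot quotient the problem by ``the transitive piece $\Sigma_\omega$'' when $\Sigma_\omega$ has no dense orbit.
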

\begin{proof}
Since $\omega$ is $i$-irreducible and $i \in \mathbb{N}$ then $\omega = 11(01)^k001\ldots$ for some $k \geq 1$. By Theorem \ref{critical}, the blocks $u= [(10)^{k-1}11]$, $v=[(10)^k]$ and their mirror images are admissible blocks of length $2^k$. Furthermore, there is no $w \in \mathcal{L}(\Sigma_{\omega})$ such that $uwv \in \mathcal{L}(\Sigma_{\omega})$. Define $$X = \{x \in \Sigma_{\omega} \mid u, v, \bar u \hbox{\rm{ or }} \bar v \hbox{\rm{ occurs in }} x\}.$$ Note that if $u$ or $\bar u$ occur in $x$ then there exist $N \geq \ell(\omega)$ such that $\sigma^n(x) \in \Sigma_{\omega} \setminus X$ for every $n \geq N$.  Besides if $v,\bar v$ occur in $x$, then $x$ has to be $(01)^{\infty}$ or $(10)^{\infty}$. Therefore $\{(01)^{\infty}, (10)^{\infty}\}$ is a transitive component with zero topological entropy, which we denote by $A_1$. By construction, $A_2$ is a sub-shift of finite type. Then, by \cite[Proposition 2.5.5]{brin}, $$h_{top}(\Sigma_{\omega}) = \max \{h_{top}(A_2), h_{top}(A_1)\} = h_{top}(A_2).$$
 
Suppose that there exists another transitive component $A$ such that $A_2 \cap A = \emptyset$ and $h_{top}(A) = h_{top}(\Sigma_{\omega}).$ Then, this component has to be different from $\{(01)^{\infty}, (10)^{\infty}\}$, and $A \subset X$. Since $A \neq \{(01)^{\infty}, (10)^{\infty}\}$ then for every $x \in A$, $u$ or $\bar u$ occur in $x$, therefore $A$ is not $\sigma$ invariant, then $A$ is not a transitive component.
\end{proof}

The unique transitive component $A$ of a non-transitive sub-shift given by Theorem \ref{component} will be called \textit{the main component of $\Sigma_{\omega}$}. As a consequence of Lemmas \ref{irreducibles2} and \ref{controlentropy} the conclusion of Theorem \ref{component} also hold for $\omega \succ 11(01)^{\infty}$. In such a case, the main component of $\Sigma_{\omega}$ is parametrised by an irreducible word, and by Theorem \ref{transitivity} $(\Sigma_{\omega}, \sigma_{\omega})$ is a transitive sub-shift of finite type. 

\newtheorem{controlentropy2}[subsection]{Lemma}
\begin{controlentropy2}
Let $a \in[a^*, 1^{\infty}]$ and $\omega$ an $i$-irreducible word for $i \geq 0$. If $a \notin (\omega, \omega^{\prime \prime \prime})$, then $h_{top}(\Sigma_a) \neq h_{top}(\Sigma_{\omega})$. \label{controlentropy2}
\end{controlentropy2}
\begin{proof}
Without generality suppose that $\omega^{\prime \prime \prime} \prec a$. Then, there exists $k \in \mathbb{N}$ such that $\omega^{\prime\prime\prime}_k = 0$ and $a_k = 1$. Let $\nu = a_1 \ldots a_k$. Recall that $$h_{top}(\Sigma_{\omega}) \leq h_{top}(\Sigma_{\nu}) \leq h_{top}(\Sigma_a),$$ since $\Sigma_{\omega^{\prime \prime \prime}} \subset \Sigma_{\nu} \subset \Sigma_a$. 

Note that $\nu \notin (\omega, \omega^{\prime \prime \prime})$. Let $A_{\omega}$ and $A_{\nu}$ be the main components of $\Sigma_{\omega}$ and $\Sigma_{\nu}$ respectively. Observe that $\nu_{\min}$ and $\nu_{\max} \in \mathcal{L}(A_{\nu})$. This implies for any $u, v \in \mathcal{L}(A_{\omega})$, there exist $w,z \in \mathcal{L}(\Sigma_{\nu})$ such that $uw[a_1\ldots a_{k-1}0]^jzv \in \mathcal{L}(A_{\nu})$ for any $j \in \mathbb{N}$. Moreover, $$uw[a_1\ldots a_{k-1}0]^jzv \in \mathcal{L}(A_{\nu} \setminus A_{\omega}).$$
Furthermore, $(a_1\ldots a_{k-1} 0)^{\infty}$ and $\overline{(a_1\ldots a_{k-1} 0)^{\infty}} \in A_{\nu} \setminus A_{\omega}$. Then, by Theorem \ref{component} and \cite[Corollary 4.4.9]{lindmarcus} we conclude $\Sigma_a > \Sigma_{\omega}$.
\end{proof}

\subsubsection*{Approximation properties and the exceptional set}

Two different ways to approximate sub-shifts in terms of shifts of finite type are given by the following definition.

\newtheorem{aproximate}[subsection]{Definition}
\begin{aproximate}
\normalfont{Let $(\Sigma, \sigma)$ be a sub-shift. We say that $(\Sigma, \sigma)$ is \textit{approximated from below} if there exist a sequence of sub-shifts $\{(\Sigma^n, \sigma_n)\}$ such that for every $n \in \mathbb{N}$:
\begin{itemize}
 \item [$i)$] $\Sigma^n \subset \Sigma^{n+1}$; 
 \item [$ii)$]$\Sigma = \overline{\mathop \bigcup \limits_{n=1}^{\infty} \Sigma^n}$;
 \item [$iii)$] there exist a homeomorphic copy $X^n$ of $\Sigma^{n}$ contained in $\Sigma^{n+1}$ such that $\sigma_{n+1} \mid_{X^n} = \sigma_n$;
 \item [$iv)$] there exist a homeomorphic copy $Y^n$ of $\Sigma^{n}$ contained in $\Sigma$ such that $\sigma \mid_{Y^n} = \sigma_n$.
\end{itemize}
Furthermore, we say that $(\Sigma, \sigma)$ is \textit{approximated from above}, if there exist a sequence of sub-shifts $\{(\Sigma^n, \sigma_n)\}$ such that for every $n \in \mathbb{N}$:
\begin{itemize}
 \item [$i^{\prime})$] $\Sigma^n \supset \Sigma^{n+1}$ and $\sigma_n \mid_{\Sigma^{n+1}} = \sigma_{n+1}$;
 \item [$ii^{\prime})$] $\Sigma = \mathop \bigcap \limits_{n=1}^{\infty} \Sigma^n$ and;
 \item [$iii^{\prime})$] $\sigma = \sigma_{n}\mid_{\Sigma}$.
 \end{itemize}
} \label{aproximate}
\end{aproximate}

Observe that we consider homeomorphic copies of $\Sigma^n$, $X^n$ and $Y^n$ for the approximation from below since we can not assure that $\Sigma^n$ is an invariant set neither for $\Sigma^{n+1}$ nor $\Sigma$. 

\newtheorem{aproximation}[subsection]{Theorem}
\begin{aproximation}
For any $a \in [1(0)^{\infty},1^{\infty})$, there exist sequences $\{a^+_n\}_{n=1}^{\infty}$ and $\{a^-_n\}_{n=1}^{\infty}$ of finite sequences such that: 
\begin{enumerate}
\item $\Sigma_a$ is approximated from below by $(\Sigma^{a^-_n}, \sigma_n)$;
\item $\Sigma_a$ is approximated from above by $(\Sigma^{a^+_n},\sigma_n)$.
\end{enumerate}\label{aproximation}
\end{aproximation}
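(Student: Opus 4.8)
The plan is to build both sequences by truncating $a$ from the right: for the approximation from below I cut $a$ at its successive $1$'s, and for the approximation from above I overwrite a far-out $0$ of $a$ by a $1$ and fill in with $0^\infty$. Each resulting defining sequence is finite, so the associated sub-shifts are of finite type by Proposition \ref{finitetype}, and the elementary monotonicity noted after Definition \ref{symmetric} (if $b\prec c$ then $\Sigma_b\subseteq\Sigma_c$) will organise them into the required nested families.

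First I would dispose of the degenerate cases. If $a$ is a finite sequence (equivalently, after the dyadic identification, $a$ has only finitely many $1$'s, including $a=10^\infty$ with $\Sigma_a=\emptyset$), then $\Sigma_a$ is itself a sub-shift of finite type and I take both sequences eventually constant equal to $a$. Hence I may assume $a$ is genuinely infinite, so it has infinitely many $1$'s and, since $a\neq1^\infty$, infinitely many $0$'s. Let $k_1<k_2<\cdots$ enumerate the positions with $a_k=1$ and $j_1<j_2<\cdots$ those with $a_j=0$, and set
$$a_n^- = a_1\cdots a_{k_n}0^\infty,\qquad a_n^+ = a_1\cdots a_{j_n-1}\,1\,0^\infty.$$
A direct comparison of prefixes gives $a_n^-\prec a_{n+1}^-\prec a\prec a_{n+1}^+\prec a_n^+$, with $a_n^-\uparrow a$ and $a_n^+\downarrow a$ in the metric $d$, so monotonicity yields the chain $\Sigma_{a_n^-}\subseteq\Sigma_{a_{n+1}^-}\subseteq\Sigma_a\subseteq\Sigma_{a_{n+1}^+}\subseteq\Sigma_{a_n^+}$, each $\Sigma_{a_n^\pm}$ being an SFT. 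Because these are honest nested sub-shifts all carrying the restriction of the global shift $\sigma$, conditions $iii),iv)$ (respectively $i'),iii')$) of Definition \ref{aproximate} hold with the identity map as the required homeomorphic copy, so only the two set-theoretic identities remain.

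For the approximation from above I would show $\bigcap_n\Sigma_{a_n^+}=\Sigma_a$. The inclusion $\supseteq$ is immediate. Conversely, if $x$ lies in every $\Sigma_{a_n^+}$ then $\sigma^i x\prec a_n^+$ for all $i,n$; since the prefixes of $a_n^+$ agree with those of $a$ up to length $j_n-1\to\infty$, a first-difference argument forces $\sigma^i x\preccurlyeq a$, and symmetrically $\sigma^i x\succcurlyeq\bar a$, for every $i$, i.e. $x\in\Sigma_a$. For the approximation from below it suffices to prove $\Sigma_a=\overline{\bigcup_n\Sigma_{a_n^-}}$; as the union sits inside the closed set $\Sigma_a$, I only need density, which I would extract from the language identity $\mathcal L(\Sigma_a)=\bigcup_n\mathcal L(\Sigma_{a_n^-})$: a word $u$ of length $N$ admissible in $\Sigma_a$ must be completed to a point of some $\Sigma_{a_m^-}$ containing it, which then lies within $2^{-N}$ of any prescribed $x\in\Sigma_a$ with that prefix.

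The main obstacle is precisely this completion step. Using $x$ itself fails when the forward iterates of $x$ approach the boundary sequence $a$ arbitrarily closely, and the naive prefix criterion for admissibility is not sufficient (for instance $00\notin\mathcal L(\Sigma_{(1100)^\infty})$ although $00$ meets the length-two prefix bounds). The remedy is to replace the tail of $x$ after $u$ by a completion whose forward orbit stays a bounded lexicographic distance from $a$ and $\bar a$ — for example a periodic completion $(u')^\infty\in\Sigma_a$ with prefix $u$, whose finitely many shifts are each strictly below $a$ and strictly above $\bar a$; such a point lies in $\Sigma_{a_m^-}$ as soon as $k_m$ exceeds the finite maximal agreement length of these shifts with $a$, because there $a_m^-$ coincides with $a$. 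To produce the completion I would invoke the symmetry of $\Sigma_a$ together with the extension property recorded after Definition \ref{irreducibleword} (every admissible word extends to one ending in $1$, via \cite[Theorem 1]{erdos}) to keep the intermediate iterates admissible. The remaining, secondary subtlety — reconciling the strict inequalities of Definition \ref{symmetric} with the closed set delivered by $\bigcap_n\Sigma_{a_n^+}$ — is handled by reading $\Sigma_a$ as the closed sub-shift it is intended to be, the discrepancy being confined to the countable set of iterates landing exactly on the orbit of $a$ or of $\bar a$.
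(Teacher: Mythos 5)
Your construction of the two approximating families is exactly the paper's: truncate $a$ at its successive $1$'s for the approximation from below, and overwrite the first $0$ after a long prefix by $1$ for the approximation from above. The nesting, invariance and from-above intersection arguments you give also coincide with the paper's (the paper, too, only obtains $\bar a \preccurlyeq \sigma^j(x) \preccurlyeq a$ in the from-above step and silently passes over the strict-inequality discrepancy you flag), and your observation that naive prefix-checking does not characterise admissibility, with the example $00 \notin \mathcal{L}(\Sigma_{(1100)^{\infty}})$, is correct.

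The genuine gap is in your remedy for the density step of the approximation from below. Your absorption argument is sound: a point $p \in \Sigma_a$ with finitely many distinct shifts (periodic, or eventually periodic) has each shift strictly between $\bar a$ and $a$, hence agrees with $a$ and $\bar a$ only up to some finite length $M$, so $p \in \Sigma_{a^-_m}$ as soon as $k_m \geq M$. What is missing is the key existence claim: that every word of $\mathcal{L}(\Sigma_a)$ occurs in such a point. The tools you cite do not deliver it. The extension property recorded after Definition \ref{irreducibleword} lets you prolong an admissible word to a longer admissible word ending in $1$, but it gives no way to close the word into an admissible cycle: admissibility of the infinite periodic sequence requires comparing \emph{all} of its shifts with $a$, which is precisely the difficulty your own counterexample exhibits. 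Since the claim ``every admissible word occurs in a point of $\bigcup_m \Sigma_{a^-_m}$'' is equivalent to the language identity $\mathcal{L}(\Sigma_a) = \bigcup_m \mathcal{L}(\Sigma_{a^-_m})$ you set out to prove, the argument as written is circular at its crux. To be fair, the paper's own proof is no better at this point: it simply asserts that $\omega \in B_{\ell(\omega)}(\Sigma_a)$ with $\ell(\omega) < \ell(a^-_{n+1})$ implies $\omega \in B_{\ell(\omega)}(\Sigma^{a^-_{n+1}})$, with no justification; your reduction at least isolates exactly what must be proved. Note also that in the countable regimes of Theorem \ref{teo1} every point of $\Sigma_a$ is eventually periodic, so there your absorption argument alone settles the matter; it is the uncountable regime where the completion lemma is genuinely needed and remains unproved.
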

\begin{proof}
Note that if $a$ is a finite sequence it suffices to consider the sequence $a_n = a$ and the sequence of sub-shifts $(\Sigma^n, \sigma_n) = (\Sigma_a, \sigma_a)$ for every $n \in \mathbb{N}$ to approximate from below and from above $(\Sigma_a, \sigma_a)$.  

Firstly, we will prove item \textit{(1)}. Consider an infinite sequence $a$. By Theorem \ref{teo1}, we may assume that $a_1 = a_2 = 1$, and by Proposition \ref{consecutive} we can assume that $a_1 = \ldots = a_n = 1$ for some $n \geq 2$. Consider $a^-_1 = 1^n$. Let $k_2 > n$ be the first index such that $a_k =1$ and $a_j = 0$ for every $n < j < k$. Such $k$ exists because $a$ is an infinite sequence. Let $a^-_2 = a_1 \ldots a_{k_2}$. Define $k_3$ as the first $k > k_2$ such that $a_k =1$ and $a^-_3 = a_1 \ldots a_{k_3}$. Then we can define a sequence $k_n$ to be the first index $k$ such that $k > k_{n-1}$ and $a_k =1$, and $a^-_n$ to be $a_1 \ldots a_{k_n}$. Note that $a^-_n \underset{n \to \infty}\longrightarrow a$. 

Note that $a^-_n$ is an increasing sequence such that $a^-_n \prec a^-_{n+1}$ for every $n \in \mathbb{N}$. Therefore, $\Sigma^{a^-_n} \subset \Sigma^{a^-_{n+1}}$ for every $n \in \mathbb{N}$, which gives $i)$ of Definition \ref{aproximate}. 

Note that Definition \ref{symmetric} and the fact that $\Sigma_a$ is a closed set imply $$\overline{\mathop \bigcup \limits_{n=1}^{\infty} \Sigma^{a^-_n}} \subset \Sigma_a.$$ Consider a cylinder $[\omega] \subset \Sigma_a$. Recall that there exist a one to one correspondence between cylinders of length $n$ and admissible words of length $n$. Therefore, we can consider $[\omega]$ as an admissible word of length $\ell(\omega)$. Let $n \in \mathbb{N}$ such that $\ell(a^-_n) \leq \ell(\omega) < \ell(a^-_{n+1})$. Then $\omega \in B_{\ell(\omega)}(\Sigma^{a^-_{n+1}})$. Therefore, there exist a point $x \in \Sigma^{a^-_{n+1}}$ such that $x \in [\omega]$, hence $\mathop \bigcup \limits_{n=1}^{\infty} \Sigma^{a^-_n}$ is dense on $\Sigma_a$, which gives us $ii)$.

Let $n \in \mathbb{N}$ and consider $a_n$ and $a_{n+1}$ and their associated sub-shifts $\Sigma^{a^-_n}$ and $\Sigma^{a^-_{n+1}}$. Let $$X^n = \{x \in \Sigma^{a^-_{n+1}} \mid a^-_n \hbox{\rm{ or }} \bar a^-_n \hbox{\rm{ does not occur in }}x\}.$$ Note that this set is $\sigma_{n+1}$ invariant and it is in bijective correspondence with $\Sigma^{a^-_n}$ by associating to each sequence $x \in \Sigma^{a^-_n}$ the same sequence $x$ in $X^n$. This gives us item $iii)$ of Definition \ref{aproximate}. Using the same argument we can construct $Y^n$ in $\Sigma_a$, satisfying the properties required by Definition \ref{aproximate} $iv)$. 

\vspace{1em}To show \textit{(2)}, note that the construction of the sequence is similar to item \textit{(1)} but we have to take certain considerations. Consider $a^-_n$ defined above. Let $a^+_1 = a^-_{k_j}1$, where $k_n$ is the first index such that $a_{k_j+1} = 0$ and $k_j > n$, where $n$ is given by $a^-_1$. Then let $a^+_i = a^-_{k_i}1$, where $i$ satisfies that $k_i$ is the first index such that $a_{k_i+1} = 0$ and $k_i > k_{i-1}$. Note that $a^+_n \underset{n \to \infty}\longrightarrow a$.  Observe that $a^+_n$ is a decreasing sequence such that $a^+_{n+1} \prec a^-_n$ for every $n \in \mathbb{N}$, then $\Sigma^{a^+_{n+1}} \subset \Sigma^{a^+_n}$ for every $n \in \mathbb{N}$. Given $x \in \Sigma^{a^+_{n+1}}$, $\sigma_n(x) \in \Sigma^{a^+_{n+1}}$, this shows that $\sigma_n(x) = \sigma_{n+1}(x)$, proving $i^{\prime})$. 

By construction $a \prec a^+_n$ for every $n \in \mathbb{N}$, then $\Sigma_a \subset \Sigma^{a^+_n}$. This fact implies that $\Sigma_a \subset \mathop{\bigcap}\limits_{n=1}^{\infty} \Sigma^{a^+_n}$. Let $x \in \mathop{\bigcap}\limits_{n=1}^{\infty} \Sigma^{a^+_n}$. Then $\bar a^+_n \prec \sigma_{a^+_n}^j(x) \prec a^+_n$ for every $j,n \geq 0$. Therefore, $\bar a \preccurlyeq \sigma^j(x) \preccurlyeq a$ for every $j \geq 0$. 

Note that for every $\sigma_a(x) = i_n(\sigma_a(x)) = \sigma_n(i_n(x))$, where $i_n: \Sigma_a \to \Sigma^{a^+_n}$ is the $n$th inclusion map. This proves item $iii^{\prime})$.
\end{proof}

We will call the approximations $(1)$ and $(2)$ given by Theorem \ref{aproximation} \textit{canonical approximations from below and above}, respectively.

\newtheorem{stabilityproperty}[subsection]{Theorem}
\begin{stabilityproperty}
Let $a \in \left[(10)^{\infty},1^{\infty}\right]$ be an infinite sequence. Then for every $m \in \mathbb{N}$ there exists $N \in \mathbb{N}$ such that $B_m(\Sigma_a) = B_m(\Sigma_{a_n}) = B_m(\Sigma_{a_N})$ for every $n \geq N$, where $a_n$ is a sequence such that $a_n \preccurlyeq a_{n+1}$ and $a_n \underset{n \to \infty}\longrightarrow a$. \label{stabilityproperty}
\end{stabilityproperty}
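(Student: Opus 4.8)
The plan is to compare the finite block sets $B_m(\Sigma_{a_n})$ with $B_m(\Sigma_a)$, exploiting monotonicity, the finiteness of each $B_m$, and the density supplied by the canonical approximation from below of Theorem~\ref{aproximation}. If $\Sigma_a = \emptyset$ (e.g. $a=(10)^{\infty}$, by Theorem~\ref{teo1}) the statement is vacuous, so I would assume $\Sigma_a \neq \emptyset$.

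First I would record the inclusions. Since $(a_n)$ is non-decreasing for $\prec$ and $a_n \to a$, every term satisfies $a_n \preccurlyeq a$: indeed, if some $a_n \succ a$, monotonicity would force $a_{n'} \succcurlyeq a_n \succ a$ for all $n' \geq n$, contradicting $a_{n'} \to a$. Using that $a \prec b$ implies $\Sigma_a \subset \Sigma_b$ (the remark following Definition~\ref{symmetric}), this yields the nested chain $\Sigma_{a_n} \subseteq \Sigma_{a_{n+1}} \subseteq \Sigma_a$, hence $B_m(\Sigma_{a_n}) \subseteq B_m(\Sigma_{a_{n+1}}) \subseteq B_m(\Sigma_a)$ for every $m$. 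As $|B_m(\Sigma_a)| \leq 2^m < \infty$, this increasing chain of subsets of a finite set stabilises: there is $N_1$ with $B_m(\Sigma_{a_n}) = B_m(\Sigma_{a_{N_1}})$ for all $n \geq N_1$, which already gives the second equality in the statement. It remains to identify the stable value with $B_m(\Sigma_a)$, i.e. to establish the reverse inclusion $B_m(\Sigma_a) \subseteq B_m(\Sigma_{a_n})$ for large $n$.

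The core step is to produce a single \emph{finite} sequence $b \prec a$ with $B_m(\Sigma_b) = B_m(\Sigma_a)$. For this I would invoke the canonical approximation from below $\{a^-_k\}$ of Theorem~\ref{aproximation}(1): the $a^-_k$ are finite sequences with $a^-_k \prec a$, $a^-_k \to a$, and, by item $ii)$ of Definition~\ref{aproximate}, $\Sigma_a = \overline{\bigcup_{k} \Sigma^{a^-_k}}$. Fix $w \in B_m(\Sigma_a)$. By shift-invariance of $\Sigma_a$, $w$ occurs at the start of some point of $\Sigma_a$, so the cylinder $[w]$ is open and meets $\Sigma_a$; being open, it must then meet the dense set $\bigcup_k \Sigma^{a^-_k}$, giving $w \in B_m(\Sigma^{a^-_k})$ for some $k$. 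Running over the finitely many words of $B_m(\Sigma_a)$, and using once more that $B_m(\Sigma^{a^-_k})$ increases in $k$ inside the finite set $B_m(\Sigma_a)$, I obtain a single index $k_0$ with $B_m(\Sigma^{a^-_{k_0}}) = B_m(\Sigma_a)$; set $b = a^-_{k_0}$.

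Finally I would transfer this finite witness to the given sequence. Since $b \prec a$, their first discrepancy occurs at a finite position $p$ with $b_p < a_p$ and $b_i = a_i$ for $i<p$; as $a_n \to a$, for all $n$ beyond some $N_2$ one has $(a_n)_i = a_i$ for $i \leq p$, whence $b \prec a_n$ and thus $\Sigma_b \subseteq \Sigma_{a_n}$. Consequently, for $n \geq N_2$,
\[
B_m(\Sigma_a) = B_m(\Sigma_b) \subseteq B_m(\Sigma_{a_n}) \subseteq B_m(\Sigma_a),
\]
so $B_m(\Sigma_{a_n}) = B_m(\Sigma_a)$. Taking $N = \max\{N_1, N_2\}$ then gives $B_m(\Sigma_a) = B_m(\Sigma_{a_n}) = B_m(\Sigma_{a_N})$ for all $n \geq N$. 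The main obstacle is exactly the reverse inclusion of the previous paragraph: a priori a point of $\Sigma_a$ may have shifts approaching $a$ arbitrarily closely, so one cannot simply keep the same point inside a strictly smaller $\Sigma_b$. The finiteness of $B_m(\Sigma_a)$ combined with the density of the union is what breaks this difficulty, letting each of the finitely many length-$m$ words be captured at a finite level and hence at a common level $k_0$.
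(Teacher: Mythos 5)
Your argument is correct, but it takes a genuinely different route from the paper. The paper does not argue topologically: it fixes the \emph{canonical} approximation $a_n$ of Theorem \ref{aproximation}(1) (whose terms are prefixes of $a$ truncated at positions where $a$ carries a $1$), chooses $N$ with $\ell(a_{N-1}) \leq m < \ell(a_N)$, and proves $B_m(\Sigma_{a_N}) = B_m(\Sigma_{a_{N+1}})$ by a direct combinatorial comparison: a word appearing at level $N+1$ but not at level $N$ would have to exceed lexicographically the maximal word $a_1 \ldots a_m$ of level $N$, forcing a prefix $\succcurlyeq a_{N+1}$, a contradiction. Your proof instead treats an \emph{arbitrary} monotone sequence $a_n \preccurlyeq a_{n+1}$, $a_n \to a$ (which is what the statement literally quantifies over, and which the paper's proof does not actually cover), and obtains the hard inclusion $B_m(\Sigma_a) \subseteq B_m(\Sigma_{a_n})$ softly from three ingredients: finiteness of $B_m(\Sigma_a)$, the density clause $ii)$ of Definition \ref{aproximate} supplied by Theorem \ref{aproximation}(1), and the lexicographic transfer $b \prec a_n$ for large $n$. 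Each of these steps checks out, including the preliminary claim $a_n \preccurlyeq a$: any $x \succcurlyeq a_n \succ a$ stays at distance at least $2^{-j}$ from $a$, where $j$ is the first discrepancy between $a_n$ and $a$, so a monotone sequence passing above $a$ could not converge to $a$. So your proof is both more general and cleaner in its logic.

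One caveat. Your argument outsources all of the combinatorial content to the density item $ii)$ of Theorem \ref{aproximation}(1). In the paper, that item is justified by the bare assertion that any $\omega \in B_{\ell(\omega)}(\Sigma_a)$ with $\ell(\omega) < \ell(a^-_{n+1})$ already lies in $B_{\ell(\omega)}(\Sigma^{a^-_{n+1}})$ --- which is precisely the nontrivial half of the present theorem, restricted to the canonical sequence. So, while your proof is formally legitimate given the order in which the paper states its results, in substance it is close to circular: a self-contained development would still need, at some point, a combinatorial argument of the paper's type (or an explicit construction showing that each word of $\mathcal{L}(\Sigma_a)$ survives in some $\Sigma^{a^-_k}$). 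That is the work the paper's proof of Theorem \ref{stabilityproperty} performs and that your proof, as written, does not replace.
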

\begin{proof}
Let $a$ be an infinite sequence and $a_n$ be the sequence given by Theorem \ref{aproximation}(1). Note that this sequence satisfies $a_n < a_{n+1}$ and $a_n \underset{n \to \infty}\longrightarrow a$. Recall that $a_i = 1$ for every $1 \leq i \leq k$ for some $k \in \mathbb{N}$. If $m < k$ then $B_m(\Sigma_{a_n})$ is determined by $B_m(X_{\mathcal{F}})$, where $\mathcal{F} = \{0^k, 1^k\}$ and $B_m(\Sigma_{a_n}) = B_m(\Sigma_a)$ for every $n \in \mathbb{N}$. Besides, if $m = k$ then $$B_m(\Sigma_a) = B_m(\Sigma_{a_n}) = B_m(X_{\mathcal{F}^{\prime}}),$$ where $\mathcal{F}^{\prime} = \{0^{k+1}, 1^{k+1}\}$. Suppose that $m > k$. Let $N \in \mathbb{N}$ such that $a_N$ is an element of $\{a_n\}_{n=1}^{\infty}$ such that $\ell(a_{N-1}) \leq m < \ell(a_N)$. It suffices to show that $B_m(\Sigma_{a_N}) = B_m(\Sigma_{a_{N+1}})$. Suppose that $B_m(\Sigma_{a_N}) \neq B_m(\Sigma_{a_{N+1}})$, then by Definition \ref{symmetric} $|B_m(\Sigma_{a_N})| < |B_m(\Sigma_{a_{N+1}})|$. This implies that there exist $\omega \in B_m(\Sigma_{a_{N+1}})$ such that $\omega \succ u$ and $\bar \omega \prec u$ for every $u \in B_m(\Sigma_{a_N})$. Let $v = v_1 \ldots v_m$ be the maximal element of $B_m(\Sigma_{a_N})$. This implies that there exists $k \leq m$ such that $w_k = 1$ and $v_k =0$. Note that the first $m$ terms of the word $a_{N+1}$ are equal to $v_i$. Then $u \succcurlyeq a_{N+1}$, which is a contradiction.  
\end{proof}

\newtheorem{stabilityproperty1}[subsection]{Theorem}
\begin{stabilityproperty1}
Let $a \in \left[(10)^{\infty},1^{\infty}\right]$ be an infinite sequence. Then for every $m \in \mathbb{N}$ there exist $N \in \mathbb{N}$ such that $B_m(\Sigma_a) = B_m(\Sigma_{a_n})$ for every $n \geq N$, where $a_n$ is a sequence such that $a_n \succcurlyeq a_{n+1}$ and $a_n \underset{n \to \infty} \longrightarrow a$. \label{stabilityproperty1}
\end{stabilityproperty1}
\begin{proof}
Let $a$ be an infinite word and $a_n$ be the sequence given by item $(2)$ of Theorem \ref{aproximation}. Let $m \in \mathbb{N}$. Let $N \in \mathbb{N}$ such that $\ell(a_{N-1}) < m \leq \ell(a_N)$. Note that $B_m(\Sigma_a)$ is determined by the first $m$ symbols of $a$, also, $a_n$ coincides with $a$ in the first $\ell(a_N)$ symbols for every $n \geq N$, then $B_m(\Sigma_{a_n})=B_m(\Sigma_a)$ for every $n \geq N$.
\end{proof}

\newtheorem{exceptional1}[subsection]{Theorem}
\begin{exceptional1}
A sequence $a \in \mathcal{E}$ if and only if $a$ is an infinite sequence such that it can be approximated from below by sub-shifts of finite type given by $i$-irreducible words, i.e, there exists a sequence $\{a^-_n \}$ of $i$-irreducible sequences such that $(\Sigma^{a^-_n}, \sigma_{a^-_n})$ is a sub-shift of finite type for every $n$ and $(\Sigma_a, \sigma_a)$ is approximated from below by $(\Sigma^{a^-_n}, \sigma_{a^-_n})$ .\label{exceptional1}
\end{exceptional1}
\begin{proof}
Let $a$ be an infinite sequence approximated from below by $\{a^-_n\}_{n=1}^{\infty}$, where $a^-_n$ is an $i$-irreducible word. Suppose that there exist an $i$-irreducible sequence $\omega$ such that $a \in (\omega, \omega^{\prime \prime \prime})$. Let $N \in \mathbb{N}$ be sufficiently large such that $\ell(\omega) < \ell(a^-_N)$. Note that $\omega \prec a^-_N \prec a$. This implies that $a^-_N \in (\omega, \omega^{\prime \prime \prime})$, which contradicts Lemma \ref{irreducibles1}. Therefore, $a \in \mathcal{E}$.

Suppose that $a \in \mathcal{E}$. Note that $a$ is an $i$-sequence for some $i \geq 0$. Let $n_i$ be a sequence of positive integers such that $n_i < n_{j+1}$ and $a_{n_j} = 1$ for every $j \in \mathbb{N}$. From Lemma \ref{irreducibles1}, there exist $a^-_j$ such that $a_1 \ldots a_{n_j} \in (a^-_j,{a^-_j}^{\prime \prime \prime})$. The sequence $a^-_j$ satisfies Definition \ref{aproximate}.
\end{proof}

Summing up from Proposition \ref{interval1} to Lemma \ref{exceptional1} we obtain the following theorem.

\newtheorem{plateaus2}[subsection]{Theorem}
\begin{plateaus2}
Every entropy plateau is of the form $\left[\omega, \omega^{\prime \prime \prime}\right]$, where $\omega$ is an $i$-irreducible finite sequence. \label{plateaus2}
\end{plateaus2}

To conclude that the entropy function is a devil's staircase, note that $\mathcal{E} \subset P$ where $P$ is the set of symmetric Parry sequences defined in Section \ref{exclusion} which has Lebesgue measure zero.

\section{Specification} 
\label{specificationsection}

\noindent A sub-shift $(\Sigma, \sigma)$ has \textit{the specification property (or simply $(\Sigma, \sigma)$ has specification)} if there exist $m \in \mathbb{N}$ such that for any $u,v \in \mathcal{L}(\Sigma)$ there exist $w \in B_m(\Sigma)$ such that $uwv \in \mathcal{L}(\Sigma)$, i.e. every two words $u$ and $v$ can be connected by a word $w$ of length $m$ \cite[p.66]{boyle}. Note that every sub-shift with specification is transitive. Alternatively, we can state the specification property in the following way: Let $(\Sigma, \sigma)$ be a sub-shift. Define 
\begin{align*}
m_n = \inf \{k \mid &\hbox{\rm{ for every }} u, v \in B_n(\Sigma) \hbox{\rm{ there exist }} w \in B_k(\Sigma)\\ 
&\hbox{\rm{ such that }} uwv \in \mathcal{L}(\Sigma) \}.
\end{align*}

Then $(\Sigma, \sigma)$ has specification if and only if $\mathop{\lim}\limits_{n \to \infty} m_n < \infty$ (the limit exists since $m_{n+1} \leq m_n$). We call to $\mathop{\lim}\limits_{n \to \infty} m_n < \infty$ \textit{the specification number of $(\Sigma, \sigma)$}. We want to characterise symmetric sub-shifts with specification using properties of their defining words. Recall that if $\omega = 1^n$ for some $n \geq 2$ then any pair of words $u, v \in \mathcal{L}(\Sigma_{\omega})$ can be connected by $0^{n-1}$. Also, in \cite{parry}, it was shown that every transitive sub-shift of finite type has specification. Then, it is natural to consider the specification number of a symmetric sub-shift $(\Sigma_{\omega}, \sigma_{\omega})$ given by an irreducible word $\omega$. In this case the specification number will be denoted by $s_{\omega}$. Using the canonical approximations given by Theorem \ref{aproximation} and the specification number of each element of the approximation from above we prove the main theorem of this section.

\newtheorem{specification}[subsection]{Theorem}
\begin{specification}
Let $a \in \mathcal{E}\cap [11(01)^{\infty}, 1^{\infty})$ and $a \in (1^n, 1^{n+1})$ for some $n \geq 2$. Then
\begin{enumerate}
\item If $0^n$ does not occur in $a$ then $(\Sigma_a, \sigma_a)$ has specification;
\item If $0^n$ occurs finite times, then $(\Sigma_a, \sigma_a)$ has specification;
\item If $0^n$ occurs infinitely many times, then $(\Sigma_a, \sigma_a)$ has specification, if $a_k^-$ satisfies Lemma \ref{splemma7} for every $k \in \mathbb{N}$.
\end{enumerate}
\label{specification}
\end{specification}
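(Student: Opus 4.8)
The plan is to realise $\Sigma_a$ as an increasing limit of transitive sub-shifts of finite type and to reduce all three items to a single quantitative fact: the uniform boundedness of the specification numbers of the approximants. Since $a \in \mathcal{E}$ and $a \succcurlyeq 11(01)^{\infty}$, Theorem \ref{exceptional1} furnishes the canonical approximation from below $\{a^-_n\}$ by irreducible words (the main components are parametrised by irreducible words by the remark following Theorem \ref{component}), and by Theorem \ref{transitivity} each $(\Sigma_{a^-_n}, \sigma_{a^-_n})$ is a transitive sub-shift of finite type. By Parry's theorem \cite{parry} every such sub-shift has specification; write $s_n := s_{a^-_n}$ for its specification number, which the proof of Theorem \ref{transitivity} already bounds in terms of $\ell(a^-_n)$ and $n$. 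First I would record the \emph{transfer principle}: given $u,v \in \mathcal{L}(\Sigma_a)$, the stability of cylinder sets (Theorem \ref{stabilityproperty}) produces, for any prescribed $M$, an index $N$ with $B_{\ell(u)+M+\ell(v)}(\Sigma_a)=B_{\ell(u)+M+\ell(v)}(\Sigma_{a^-_N})$, so that $u,v \in \mathcal{L}(\Sigma_{a^-_N})$; any bridge of length $s_N$ joining them inside $\Sigma_{a^-_N}$ lies in $\mathcal{L}(\Sigma_a)$ because $\Sigma_{a^-_N} \subseteq \Sigma_a$. Consequently, once $\sup_n s_n < \infty$, every pair of words of $\Sigma_a$ is connected by a bridge of uniformly bounded length, and a routine padding argument (each approximant being mixing, as noted after Theorem \ref{transitivity}) upgrades this to the exact-length requirement, giving $\lim_n m_n(\Sigma_a) < \infty$. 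Thus each of the three items reduces to establishing $\sup_n s_n < \infty$ under its hypothesis on the occurrences of $0^n$ in $a$.

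For item (1), when $0^n$ does not occur in $a$ the words $a^-_n$ contain no run of $n$ zeros, so in the bridge construction of Theorem \ref{transitivity} the only obstruction comes from the block $1^n$, and the connecting words reduce to the short type $z\,0^{n-1}$ whose length is governed solely by $n$. I would show directly that any pair $u,v \in B_{\ell(a^-_n)}(\Sigma_{a^-_n})$ can be joined by a word whose length is bounded by a constant depending only on $n$ and not on the depth $n$ of the approximation, whence $\sup_n s_n < \infty$ and the transfer principle applies.

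Item (2) is then reduced to item (1). If $0^n$ occurs only finitely often, then after its last occurrence the tail of $a$ satisfies the hypothesis of item (1); moreover the finitely many occurrences of $0^n$ appear in only finitely many of the irreducible words $a^-_n$ and, using Proposition \ref{consecutive} to control the surrounding runs, they contribute only a bounded overhead to the corresponding specification numbers. Combining the uniform bound from item (1) for the deep approximants with the finitely many exceptional indices yields $\sup_n s_n < \infty$ again, and the conclusion follows verbatim.

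Item (3) is the genuine difficulty and the reason a hypothesis must be imposed. When $0^n$ occurs infinitely often, each fresh occurrence inside a deeper approximant $a^-_n$ can force the minimal admissible words $\bar{\omega}_k$ that appear in the bridge construction of Theorem \ref{transitivity} to lengthen, so that a priori $s_n \to \infty$ and specification may fail outright. The hard part is therefore to quantify the \emph{return structure} after each block $0^n$: I would invoke the arithmetic condition of Lemma \ref{splemma7} on every $a^-_k$, which guarantees that after each occurrence of $0^n$ the sequence returns to a synchronising configuration within a bounded number of symbols, uniformly in $k$. Granting this, no bridge produced by Theorem \ref{transitivity} exceeds that uniform return bound, so $\sup_n s_n < \infty$ and the transfer principle again delivers specification for $(\Sigma_a,\sigma_a)$. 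The crux of the whole theorem is precisely that the condition of Lemma \ref{splemma7} is exactly what prevents the specification numbers of the approximants from diverging; everything else is an application of the approximation and stability machinery of Section \ref{resultstransitivityandentropy}.
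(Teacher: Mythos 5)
Your overall architecture --- realise $\Sigma_a$ as a limit of transitive sub-shifts of finite type given by irreducible words (Theorems \ref{exceptional1} and \ref{transitivity}), transfer bridges to $\Sigma_a$ via stability of the languages, and reduce all three items to the uniform boundedness of the specification numbers of the approximants --- is essentially the paper's strategy: the paper assembles Theorem \ref{specification} from Theorems \ref{specification1}, \ref{specification2}, \ref{specification3}, \ref{specification4}, \ref{specification5}, each of which bounds $s_{a^+_r}$ uniformly in $r$ and invokes Theorem \ref{stabilityproperty1}. Your one structural deviation, approximating from below instead of from above, is legitimate and in one respect cleaner: a bridge produced inside $\Sigma_{a^-_N} \subseteq \Sigma_a$ is automatically admissible in $\Sigma_a$, whereas the paper's from-above argument needs stability to pull bridges of $\Sigma_{a^+_N}$ back into $\Sigma_a$. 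The exact-length (padding) issue you flag is glossed over by the paper as well, so I do not count it against you.

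The genuine gap is in your item (1). You claim that when $0^n$ does not occur in $a$ the only obstruction is the block $1^n$, so that bridges of the form $z0^{n-1}$ have length bounded by a constant depending only on $n$. This is false, and the paper's Lemma \ref{splemma4} is precisely the counterexample: even when $0^n$ does not occur in the \emph{defining word}, the blocks $0^n$ and $1^n$ do occur in \emph{admissible words} of $\Sigma_{a^-_r}$ (Proposition \ref{consecutive} only forbids $0^{n+1}$ and $1^{n+1}$), and if $a$ lies lexicographically close to $(1^n)^{\prime\prime\prime} = 1^n(0^{n-1}1)^{\infty}$, say within $2^{-(n(k+1)+1)}$, then after a terminal block $0^n(1^{n-1}0)^r1^i$ in $u$ the next roughly $nk$ symbols of any continuation are forced, so $s_{a^-_r} > nk$. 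Thus no bound depending only on $n$ exists; what is true, and what makes the theorem work, is a bound uniform in the depth $r$ but depending on $a$ through its distance to $(1^n)^{\prime\prime\prime}$ (note $a \in \mathcal{E}$ guarantees this distance is positive, since $[1^n,(1^n)^{\prime\prime\prime}]$ is a plateau), and obtaining it requires the stratified analysis of Lemmas \ref{splemma2}, \ref{splemma3} and \ref{splemma4}. The same over-simplification recurs in your item (2): the ``bounded overhead'' contributed by finitely many occurrences of $0^n$ is not automatic; the paper's bound in Theorem \ref{specification4} is $\ell(a^+_N)+3n$, where $N$ is the index of the approximant containing the last occurrence of $0^n$, and it is extracted from Lemmas \ref{splemma5}, \ref{splemma6} and \ref{splemma7}, not from a reduction to item (1). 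Your item (3), which takes the hypothesis of Lemma \ref{splemma7} as a black box, matches the paper's Theorem \ref{specification5} and is fine.
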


As it was mentioned in Section \ref{resultstransitivityandentropy} from the proof of Theorem \ref{transitivity} $s_{\omega} \leq 2\ell(\omega)$. Moreover, observe that for every $n \geq 3$ an irreducible word $\omega \in (1^n, 1^{n+1})$ has at least length $n+2$ and $5$ if $\omega \in (11(01)^{\infty}, 111)$. Also, as a consequence of Lemma \ref{irreducibles1}, there are no irreducible words of the form $1^n0^{n-1}10^n1\ldots$ for every $n \geq 3$, and by Theorem \ref{critical}, the same is true for $n = 2$. 

\vspace{1em}We say that a sub-shift $\Sigma$ is a \textit{coded system} if $$\Sigma = \overline{\mathop \bigcup \limits_{n=1}^{\infty} \Sigma^n},$$ where each $\Sigma^n$ is a transitive sub-shift of finite type. Note that coded systems are transitive sub-shifts. We say that a sub-shift $\Sigma$ is an \textit{almost sofic system} if for every $\varepsilon > 0$ there exist a sub-shift of finite type $\Sigma_{\varepsilon} \subset \Sigma$ such that $h_{top}(\Sigma) > h_{top}(\Sigma_{\varepsilon}) - \varepsilon$. It is worth pointing out now that from Definition \ref{aproximate}, and the continuity of the entropy function, it is clear that for any $a \in \left[(10)^{\infty}, 1^{\infty}\right] \cap \mathcal{E}$, $\Sigma_a$ is an almost sofic system. And if $a \in \mathcal{E}$ and $a \succ 11(01)^{\infty}$ then $\Sigma_a$ is a mixing coded system.

\vspace{1em}In the following Lemma, we give an upper bound of the specification number, which depends not on the length of the irreducible word $\omega$ but on the occurrence of strings of zeroes.

\newtheorem{splemma2}[subsection]{Lemma}
\begin{splemma2}
Let $\omega \in (1^n, 1^{n+1})$ be an irreducible word with $n \geq 3$. If the blocks $0^{n-1}$ or $1^{n-1}$ do not occur in $\omega$, then $s_{\omega} \leq 2(n-1)$.\label{splemma2}
\end{splemma2}
\begin{proof}
Let $\omega$ be an irreducible word satisfying the hypothesis. Without loosing generality it suffices to consider words $u$ and $v$ such that $u$ ends with $1$, $v$ starts with $1$ and $\ell(u) = \ell(v) = \ell(\omega)$. Note that for any $v$ starting with $1$, the word $0^{n-1}v$ is admissible, and if $u$ does contain neither the blocks $0^n$ nor $1^n$ then $0^{n-1}$ is a bridge between $u$ and $v$ of length $n-1$. Suppose that $0^n$ or $1^n$ occurs in $u$. Let $$j = \max \{k \in \{0, \ldots \ell(\omega)-2n \} \mid \sigma^k(u) = 0^n1 \ldots u_{\ell(\omega) - 1}1 \hbox{\rm{ or }} 1^n0 \ldots u_{\ell(\omega)-1} 1 \}.$$ If $\sigma^j(u) = 1^n0\ldots u_{\ell(\omega-1)}1$, then $0^{n-1}$ is a bridge between $u$ and $v$ of length $n-1$. If $\sigma^j(u) = 0^n1\ldots 1$, then $\sigma^j(u) \succ \omega_{\min_{\ell(\omega)-j}}$ or $\sigma^j(u) = \omega_{\min_{\ell(\omega)-j}}$. If $\sigma^j(u) \succ \omega_{\min_{\ell(\omega)-j}}$ then $0^{n-1}$ is a bridge between $u$ and $v$ of length $n-1$. If $\sigma^j(u) = \omega_{\min_{\ell(\omega)-j}}$, then there exists $1 \leq k \leq n-2$ such that $u1^k$ is admissible. Then $1^k0^{n-1}$ is a bridge between $u$ and $v$ with length at most $2n-2$.
\end{proof}

\newtheorem{specification1}[subsection]{Proposition}
\begin{specification1}
Let $a \in \mathcal{E}$ such that $a \in \left[1^n, 1^{n+1} \right]$ for some $n \geq 3$. If $0^{n-1}$ does not occur in $a$ then $(\Sigma_a, \sigma_a)$ has specification.\label{specification1}
\end{specification1}
\begin{proof}
Let $a$ be an infinite sequence satisfying the hypotheses of the Theorem. Let $a^+_r$ be given by Theorem \ref{aproximation} $(2)$. Observe that the block $0^{n-1}$ does not occur in $a^+_n$ for any $n \in \mathbb{N}$. By Theorem \ref{stabilityproperty1}, $m_r = s_{a^+_{N(r)}}$. By Lemma \ref{splemma2}, $m_r \leq 2(n-1)$ for every $n$, whence $$\mathop{\lim}\limits_{r \to \infty} m_r < 2(n-1).$$ Therefore, $\Sigma_a$ has specification.  
\end{proof}

The presence of the blocks $0^{n-1}, 1^{n-1}, 0^n$ and $1^n$ in an infinite sequence $a \in (1^n, 1^{n+1})$ will increase the specification number of each member of the canonical approximation given by Theorem \ref{aproximation} $(2)$, in certain situations. Firstly, we show that the occurrence of $0^{n-1}$ will have no mayor affect on the specification number of $\Sigma_{\omega}$ if $d(\omega, 1^n(0^{n-1}1)^{\infty})$ is bounded away from zero.

\newtheorem{splemma3}[subsection]{Lemma}
\begin{splemma3}
Let $\omega \in (1^n, 1^{n+1})$ be an irreducible word with $n \geq 3$. If $d(\omega, 1^n(0^{n-1}1)^{\infty}) \geq \frac{1}{2^{2n-1}}$ and $0^n$ or $1^n$ does not occur in $\omega$, then $s_{\omega} \leq 2n$.\label{splemma3}
\end{splemma3}
\begin{proof}
The argument of this proof is similar to the one used for Lemma \ref{splemma2}. Let $\omega$ be an irreducible word satisfying the hypothesis. Observe that $d(\omega, 1^n(0^{n-1}1)^{\infty}) \geq \frac{1}{2^{2n-1}}$, implying that the block $[(0^{n-1}1)]^k$ does not occur after $1^n$ for any $k \in \mathbb{N}$. Let $u$ and $v$ be admissible words such that $0^n$ or $1^n$ occurs in $u$ and $\ell(u) = \ell(v) = \ell(\omega)$. Let $$j = \max \{k \in \{0, \ldots \ell(\omega)-2n \} \mid \sigma^k(u) = 0^n1 \ldots u_{\ell(\omega) - 1}1 \hbox{\rm{ or }} 1^n0 \ldots u_{\ell(\omega)-1} 1 \}.$$ If $\sigma^j(u) = 1^n0\ldots u_{\ell(\omega-1)}1$, then $0^{n-1}$ is a bridge between $u$ and $v$ with length $n-1$. If $\sigma^j(u) = 0^n1\ldots 1$, then $\sigma^j(u) \succ \omega_{\min_{\ell(\omega)-j}}$ or $\sigma^j(u) = \omega_{\min_{\ell(\omega)-j}}$. If $\sigma^j(u) \succ \omega_{\min_{\ell(\omega)-j}}$ then $0^{n-1}$ is a bridge between $u$ and $v$ of length $n-1$. If $\sigma^j(u) = \omega_{\min_{\ell(\omega)-j}}$, then there exists $1 \leq k \leq n-2$ such that $u1^k$ is admissible. Then $1^k0^{n-1}$ is a bridge between $u$ and $v$ with length at most $2n$.
\end{proof}

Observe that the proof of Proposition \ref{specification1} also holds for infinite sequences that can be approximated from above by words satisfying the hypothesis of Lemma \ref{splemma3}. Then the following theorem is true. The proof is left to the reader.

\newtheorem{specification2}[subsection]{Theorem}
\begin{specification2}
Let $a \in \mathcal{E}$ such that $a \in (1^n, 1^{n+1})$ for some $n \geq 2$. If $d(\omega, 1^n(0^{n-1}1)^{\infty}) \geq \frac{1}{2^{2n-1}}$ and $0^n$ or $1^n$ does not occur in $\omega$ then $(\Sigma_a, \sigma_a)$ has specification.\label{specification2}
\end{specification2}

Given an irreducible word $\omega$, we show that the specification number of $\Sigma_{\omega}$ will depend on the distance between $\omega$ and $1^n(0^{n-1}1)^{\infty}$.

\newtheorem{splemma4}[subsection]{Lemma}
\begin{splemma4}
Let $\omega \in (1^n, 1^{n+1})$ be an irreducible word with $n \geq 2$. If $0 < d(\omega, 1^n(0^{n-1}1)^{\infty}) \leq \frac{1}{2^{n(k+1)+1}}$ for $k \in \mathbb{N}$ and $0^n$ or $1^n$ does not occur in $\omega$, then $nk < s_{\omega} < n(k+2)$.\label{splemma4}
\end{splemma4}
\begin{proof}
Let $k \in \mathbb{N}$ and $\omega$ be an irreducible word such that $$d(\omega, 1^n(0^{n-1}1)^{\infty}) \leq \frac{1}{2^{n(k+1)+1}}.$$ Then the first $n(k+1)$ symbols of $\omega$ coincide with  $1^n(0^{n-1}1)^k$.  Let $u$ and $v$ be admissible words such that $0^n$ or $1^n$ occurs in $u$ and $\ell(u) = \ell(v) = \ell(\omega)$. Let $$j = \max \{k \in \{0, \ldots \ell(\omega)-2n \} \mid \sigma^k(u) = 0^n1 \ldots u_{\ell(\omega) - 1}1 \hbox{\rm{ or }} 1^n0 \ldots u_{\ell(\omega)-1} 1\}.$$ By our choice of $j$, if $\sigma^j(u)$ starts with $0^n$, whence $\sigma^j(u) = 0^n(1^{n-1}0)^r1^i$, with $0 \leq r < k$ and $1 \leq i \leq n-1$. By symmetry, if $\sigma^j(u)$ starts with $1^n$ then $\sigma^j(u) = 1^n(0^{n-1}1)^r$,  with $0 \leq r < k$ and $1 \leq i \leq n-1$. Let $w$ be such that $uw$ is an admissible word. Then the first $n-1-i$ symbols of $w$ are forced to be $1$ if $\sigma^j(u)$ starts with $0^n$ and to be $0$ if $\sigma^j(u)$ starts with $1^n$. The symbol $n-i$ is free. Note that for any choice of the symbol $n-i$, the following $s(n-1)$ symbols will be forced for $1 \leq s \leq k$. Then $nk$ is a lower bound for $s_{\omega}$. To compute the upper bound note that the symbol $sn+l$ is free for some $2 \leq l \leq n-1$. Then $s_{\omega} \leq nk+l+n-1 \leq nk + 2(n-1) < n(k+2)$.
\end{proof}

Observe that Lemma \ref{splemma4} shows that the specification number depends only on the first $n(k+1)$ digits of $\omega$ if $$d(\omega, 1^n(0^{n-1}1)^{\infty}) \leq \dfrac{1}{2^{n(k+1)+1}}.$$ 

\newtheorem{specification3}[subsection]{Theorem}
\begin{specification3}
Let $a \in \mathcal{E} \cap (1^n, 1^{n+1})$ $n \geq 3$. If $$0 < d(a, 1^n(0^{n-1}1)^{\infty}) \leq \dfrac{1}{2^{n(k+1)+1}}$$ for $k \in \mathbb{N}$ and $0^n$ or $1^n$ do not occur in $\omega$, then $(\Sigma_a, \sigma_a)$ has specification.\label{specification3}
\end{specification3}
\begin{proof}
Let $a \in \mathcal{E} \cap (1^n, 1^{n+1})$. Then there exists $k \in \mathbb{N}$ such that $$\frac{1}{2^{n(k+2)+1}} < d(a, 1^n(0^{n-1}1)^{\infty}) \leq \frac{1}{2^{n(k+1)+1}}.$$ Let $a^+_r$ be the canonical approximation from above given by Theorem \ref{aproximation}. Then for every $r \in \mathbb{N}$, the first $kn$ symbols of $a^+_n$ are determined by $1^n(0^{n-1}1)^k$. This implies that $a^+_n$ satisfies the hypothesis of Lemma \ref{splemma4} for every $r$, i.e. $$nk < s_{a^+_{N(r)}} < n(k+2).$$ By Theorem \ref{stabilityproperty1}, $m_r = s_{a^+_{N(r)}}$. Therefore, $$\mathop{\lim}\limits_{r \to \infty} m_r < n(k+2).$$ 
\end{proof}

It turns out that is needed to study irreducible words when the block $0^n$ (Lemmas \ref{splemma5}, \ref{splemma6} and \ref{splemma7}) in order to prove Theorem \ref{specification}. Notice that for every irreducible word $\omega$ such that $1^n \prec \omega \prec 1^{n+1}$ the block $0^n$ occurs in $\omega^{\prime}$ at least once. The situation explained in Lemma \ref{splemma4} is not limited only to irreducible words close to $1^n(0^{n-1}1)^{\infty}$.

\newtheorem{splemma5}[subsection]{Lemma}
\begin{splemma5}
Let $\upsilon$ be irreducible words such that $\upsilon \in (1^n, 1^{n+1})$ for a fixed $n \geq 2$. If there exists $\omega \in (1^n, 1^{n+1})$ such that $\omega$ does not contain the block $0^n$ and $$\dfrac{1}{2^{(k+1)\ell(\omega) + n}} \leq d(\omega^{\prime \prime \prime}, \upsilon) \leq \dfrac{1}{2^{(k+1) \ell(\omega)}}$$ for some $k \in \mathbb{N}$ and $0^n$ does not occur in $\nu$ after $(k+1)\ell(\omega)$ digits, then 
\begin{center}
$k\ell(\omega) \leq s_{\upsilon} \leq k\ell(\omega) + n(k+2).$ 
\end{center}
\label{splemma5}
\end{splemma5}
\begin{proof}
Let $\omega$ and $\upsilon$ be irreducible words satisfying the hypothesis. Then, $\upsilon$ contains the block $0^n$ at least $k$ times, and by Lemma \ref{irreducibles1} there is no $\ell(\omega)< j < (k+1)\ell(\omega)$ such that $\upsilon_1 \ldots \upsilon_j$ is irreducible, besides $\ell(\omega) < \ell(\upsilon)$. Indeed, $\ell(\upsilon) > (k+1)\ell(\omega)$. By Lemmas \ref{splemma2}, \ref{splemma3} and \ref{splemma4} we may confine ourselves to considering sequences which lie in $\Sigma_{\upsilon} \setminus \Sigma_{\omega}$ to compute bounds for $s_{\upsilon}$. 

Consider admissible words $u, v$ such that $\ell(u) = \ell(v) = \ell(\upsilon)$, $v$ starts with $1$ and $$u_{\ell(\upsilon) - (\ell(\omega)+1)} \ldots u_{\ell(u)} = \bar\omega 1.$$ Observe that if $z$ is a bridge between $u$ and $v$ then $z_1 = 1$. If $z_1 = 0$ then $\sigma^{\ell(\upsilon) - (\ell(\omega)+1)}0 \prec \bar \upsilon$. The same argument holds for $z_j$ for $j \leq n-1$. Observe that $z_n = 0$. If $z_n = 1$ then $uz$ will contain the block $1^{n+1}$ which is a contradiction. Therefore, $z$ will satisfy that $$\sigma^{\ell(\upsilon) - (\ell(\omega)+1)}(u) z = \bar\omega \omega_1 \ldots \omega_{\ell(\omega)-1},$$ then $z$ has at least length $\ell(\omega)-1$. Observe that the following digit is free. If $1$ is chosen then $z_1 \ldots z_{\ell(\omega)} = \omega$, which will force the following $\ell(\omega)-1$ digits to coincide with $\bar \omega$. If $0$ is chosen then $\sigma^{\ell(\upsilon) - (\ell(\omega)+1)}z = \bar \upsilon$, therefore, the next $\ell(\omega)-1$ digits of $z$ are forced by the previous argument. It has been shown that $z$ has a length of at least $k(\ell(\omega)) - 1$ and that after $\ell(\omega)-1$ steps we will have a choice. By Theorem \ref{transitivity}, the minimum number of steps to get from $\sigma^{\ell(\upsilon) - (\ell(\omega)+1)}(u)$ to $\upsilon_{\min_{(k+1)\ell(\omega)}}$ by $z$ is $\ell(\upsilon) - (\ell(\omega)+1)$. Substituting $\ell(\upsilon) > (k+1)\ell(\omega)$ in the minimum number of steps we obtain the desired lower bound for $s_{\upsilon}$. 

To obtain the upper bound, let $u$ be an admissible word with length $\ell(\upsilon)$ such that $u \notin B_{\ell(\upsilon)}(\Sigma_{\omega})$ such that $u$ ends with $1$. By hypothesis, $$\dfrac{1}{2^{(k+1)\ell(\omega) + n}} \leq d(\omega^{\prime \prime \prime}, \upsilon) \leq \dfrac{1}{2^{(k+1 \ell(\omega))}}.$$ Therefore, there exists an index $(k+1)\ell(\omega) < j < (k+1)\ell(\omega)+n$ such that $\upsilon_i  = 0$ for $\ell(\upsilon) \leq i < j$ and $\upsilon_j = 1$. This implies that the following $j-1$ symbols are forced to be $1$ and the symbol $j$ is free. If one is chosen then $z = 1^j$ with $0 \leq j \leq n-1$. By Lemma \ref{splemma4}, $z$ has at most $n(k+2)$ digits forced after its first $k\ell(\omega)$ digits. Therefore, the desired upper bound is obtained.
\end{proof}

Note that Lemma \ref{splemma5} deals with a particular family of sub-shifts where $0^n$ does not occur randomly in its defining word; namely, irreducible words such that they are close to a particular entropy plateau defined by a word without $0^n$ occurring in it. 

\newtheorem{spremark1}[subsection]{Proposition}
\begin{spremark1}
For every irreducible word $\upsilon$ such that $0^n$ occurs in $\upsilon$ there exists a word $\omega$ such that $\omega$ and $\upsilon$ satisfy the distance condition of Lemma \ref{splemma5} and $\omega$ does not contain $0^n$.  \label{spremark1}
\end{spremark1}
\begin{proof}
The result is just a consequence of Proposition \ref{interval1} and Lemma \ref{irreducibles1} applied to $\omega = 1^n$ and the given $\upsilon$.
\end{proof}

It is natural to ask if the specification number $s_{\upsilon}$ will change if $0^n$ occurs in $\upsilon$. As it will be shown, everything depends on the longest irreducible word, which does not contain $0^n$ occurring in $\upsilon$ and the way that $0^n$ is placed. As it is shown in the following generalisation of Lemma \ref{splemma5}, the specification number for an irreducible word depends entirely on the specification number of the irreducible subwords ocurring in $\upsilon$. The proof is essentially the same. 
 
\newtheorem{splemma6}[subsection]{Lemma}
\begin{splemma6}
Let $\upsilon$ be an irreducible word such that $\upsilon \in (1^n, 1^{n+1})$ for a fixed $n \geq 2$, and $0^n$ occurs in $\upsilon$. If there exist an irreducible word $\tau$ such that $$\dfrac{1}{2^{(k+1)\ell(\tau) + n}} \leq d(\tau^{\prime \prime \prime}, \upsilon) \leq \dfrac{1}{2^{(k+1) \ell(\tau)}}$$ for some $k \geq 0$, $\tau$ satisfies Lemma \ref{splemma5} and for any $j \leq |(\ell(\nu) -1)- k\ell(\tau)|$ such that $\upsilon_j = 1$ the word $\upsilon_1 \ldots \upsilon_j$ is not irreducible, then $$k\ell(\tau) \leq s_{\upsilon} \leq k\ell(\tau) + n(k+2).$$ \label{splemma6}
\end{splemma6}
\begin{proof}
Let $\upsilon$ be an irreducible word and suppose that there exists an irreducible word $\tau$ satisfying the hypothesis. By Lemmas \ref{splemma2}, \ref{splemma3}, \ref{splemma4} and \ref{splemma5} we only need to consider sequences that lie in $\Sigma_{\upsilon} \setminus \Sigma_{\tau}$ to compute bounds for $s_{\upsilon}$. 

Let $u, v$ be admissible words such that $\ell(u) = \ell(v) = \ell(\upsilon)$, $v$ starts with $1$ and $$u_{\ell(\upsilon) - (\ell(\tau)+1)} \ldots u_{\ell(u)} = \bar\tau 1.$$ Let $z$ be a bridge between $u$ and $v$. Applying the same construction as in Lemma \ref{splemma5} it is shown that $z$ has a length of at least $k(\ell(\tau)) - 1$. Applying Theorem \ref{transitivity}, the minimum number of steps to get from $\sigma^{\ell(\upsilon) - (\ell(\tau)+1)}(u)$ to $\upsilon_{\min_{(k+1)\ell(\tau)}}$ by $z$ is $\ell(\upsilon) - (\ell(\tau)+1)$. Substituting $\ell(\upsilon) > (k+1)\ell(\tau)$ in the minimum number of steps, we obtain the desired lower bound for $s_{\upsilon}$. 

To obtain the upper bound, let $u$ be an admissible word with length $\ell(\upsilon)$ such that $u \notin B_{\ell(\upsilon)}(\Sigma_{\tau})$ and $u$ ends with $1$. By hypothesis $$\dfrac{1}{2^{(k+1)\ell(\tau) + n}} \leq d(\omega^{\prime \prime \prime}, \upsilon) \leq \dfrac{1}{2^{(k+1 \ell(\tau))}}.$$ Therefore, there exists an index $(k+1)\ell(\tau) < j < (k+1)\ell(\tau)+n$ such that $\upsilon_i  = 0$ for $\ell(\upsilon) \leq i < j$ and $\upsilon_j = 1$. This implies that the following $j-1$ symbols are forced to be $1$ and the symbol $j$ is free. If one is chosen then $z = 1^j$ with $0 \leq j \leq n-1$. By Lemma \ref{splemma5}, $z$ has at most $n(k+2)$ digits forced after its first $k\ell(\tau)$ digits. Therefore, the desired upper bound is obtained. 
\end{proof}

\newtheorem{splemma7}[subsection]{Lemma}
\begin{splemma7}
Let $n \geq 2$ and $\upsilon \in (1^n, 1^{n+1})$ be an irreducible word such that for every irreducible subword $\omega^i$,  $0^n$ occurs in $\omega^i$, and $$\dfrac{1}{2^{2\ell(\omega^i)}} < d({\omega^i}^{\prime}, \upsilon) \leq \dfrac{1}{2^{\ell(\omega^i) + n}}.$$ Then $$s_{\upsilon} \leq \ell(\omega) + n,$$ where $\omega$ is the longest irreducible word such that $0^n$ does not occur in $\upsilon$.\label{splemma7}
\end{splemma7}
\begin{proof}
Let $\omega^1 = \omega$. By hypothesis $$\dfrac{1}{2^{2\ell(\omega^1)}} < d({\omega^1}^{\prime}, \upsilon) \leq \dfrac{1}{2^{\ell(\omega^1) + n}}.$$ This implies that there exist $\ell(\omega^1)+ (n+1) < j <  2\ell(\omega^1)$ such that ${\omega}^{\prime}_j = 0$ and $\upsilon_j = 1$. Moreover, $j \leq \ell(\upsilon)$ and $\tau = \upsilon_1 \ldots \upsilon_j$ is an irreducible word such that for any $j^{\prime} < j$ satisfying $\upsilon_{j^\prime} = 1$,  $\upsilon_1 \ldots \upsilon_{j^\prime}$ is irreducible. Firstly, we will calculate the specification number for $\Sigma_{\tau}$. 

Let $u \in B_{\ell(\tau)}(\Sigma_{\tau}) \setminus B_{\ell(\tau)}(\Sigma_{\omega})$ such that $u$ ends with $1$ and $v \in  B_{\ell(\tau)}(\Sigma_{\tau})$ such that $v$ starts with $1$. Then the blocks $[\omega]$ or $[\bar \omega]$ occur in $u$. Let $1 \leq k \leq \ell(\tau) - \ell(\omega)$ such that $\sigma^k(u)$ starts with $\omega$ or $\bar \omega$. Without loosing generality suppose that $\sigma^k(u)$ starts with $\bar \omega$. Observe that the word $z0^s$, where $z_i = \bar \tau_{\ell(\omega)+i}$ for $1 \leq i \leq (j-\ell(\omega))$ and $1 \leq s \leq n-1$, is a bridge between $u$ and $v$. Then $s_{\tau} \leq j-\ell(\omega^1) + n \leq \ell(\omega) + n$.  

Inductively, suppose that for every irreducible word, every irreducible subword $\omega^i$ is such that $$\dfrac{1}{2^{2\ell(\omega^i)}} < d({\omega^i}^{\prime}, \upsilon) \leq \dfrac{1}{2^{\ell(\omega^i) + n}},$$ then $$s_{\upsilon} \leq \ell(\omega) + n.$$ Let $\omega^{i+1}$ be the next irreducible word such that $$\dfrac{1}{2^{2\ell(\omega^{i+1})}} < d({\omega^{i+1}}^{\prime}, \upsilon) \leq \dfrac{1}{2^{\ell(\omega^{i+1}) + n}}.$$ Then, $\ell(\omega^2)+ (n+1) < \ell(\omega^3) <  2\ell(\omega^2)$, ${\omega^2}^{\prime}_j = 0$ and $\upsilon_{\ell(\omega^3)} = 1$. Observe that $0^n$ occurs in $\omega^{i+1}$ at least twice. Let $u \in B_{\ell(\omega^{i+1})}(\Sigma_{\omega^{i+1}}) \setminus B_{\ell(\omega^{i+1})}(\Sigma_{\omega^i})$ such that $u$ ends with $1$ and $v \in  B_{\ell(\omega^{i+1})}(\Sigma_{\omega^i})$ such that $v$ starts with $1$. Let $k \in \{0, \ldots, \ell(\omega^{i+1}) -\ell(\omega^i)\}$ such that $\sigma^k(u)$ starts with $\omega^i$ or $\bar \omega^i$. Without generality suppose that $\sigma^k(u)$ starts with $\bar \omega^i$. Observe that the word $z0^s$, where $z_i = \omega_i$ for $1 \leq i \leq (j-\ell(\omega^1))$ and $1 \leq s \leq n-1$, is a bridge between $u$ and $v$. Then $s_{\omega^{i+1}} \leq j-\ell(\omega^1) + n \leq \ell(\omega^1) + n$. 

Observe that there exists $I \leq \ell(\upsilon)$ such that $\omega^{I}$ satisfies the distance condition stated above and for any $i \geq I$, $0^n$ does not occur in $\omega^i$. Then $\ell(\omega)+n \leq s_{\upsilon}$. If $\ell(\upsilon) = I$ then $s_{\upsilon} = \ell(\omega) + n$. Suppose that $\ell(\upsilon) > I$. To show that $s_{\upsilon}$ is bounded from above, consider $u \in B_{\ell(\upsilon)}(\Sigma_{\upsilon}) \setminus B_{\ell(\upsilon)}(\Sigma_{I})$ and $v \in B_{\ell(\upsilon)}(\Sigma_{\upsilon})$ with $u$ ending with $1$ and $v$ ending with $1$. Then the block $(\bar \omega)^I$ occurs in $u$. Note that $\upsilon_{I+1} = 0$. If  $\upsilon_{I+1} = 1$ then $\upsilon_1 \ldots \upsilon_{I+1}$ is an irreducible word which contains $0^n$ and will satisfy the required distance condition, which is a contradiction to the choice of $I$. Let $t \leq \ell(\upsilon)$ such that $\upsilon_1 \ldots \upsilon_t$ is irreducible. Note that $t \leq I + n-1$. If $t \leq I + n$ then $0^n$ occurs in $\upsilon_t$ after $I$ symbols. Therefore there exist $1 \leq j \leq n-1$ such that the following $j$ symbols after $I+n$ symbols of $\bar \upsilon$ are forced to be $0$. Then $z = \bar \upsilon_{I+j} \ldots \bar \upsilon_{t-1}1 0^{n-1}$ is a bridge between $u$ and $v$ with $\ell(z) \leq 2n \leq \ell(\omega) + n$. Then $s_{\upsilon} = \ell(\omega)+n.$  
\end{proof}

\newtheorem{specification4}[subsection]{Theorem}
\begin{specification4}
If $a \in \mathcal{E}$ with $a \succ 11(01)^{\infty}$ and $a$ contains the block $0^n$ a finite number of times, then $\Sigma_a$ has specification. \label{specification4}
\end{specification4}
\begin{proof}
Let $a \in \mathcal{E}$ such that $0^n$ occurs $l$ times. Let $\{a^+_r\}_{r=1}^{\infty}$ the canonical approximation from above given by Theorem \ref{aproximation} $(2)$. By Theorem \ref{stabilityproperty1} $m_r = s_{a^+_{N(r)}}$. By hypothesis, there exist $N \in \mathbb{N}$ such that for every $j \geq N$ the block $0^n$ occurs $l$ times. Then: 
\begin{enumerate}[(i)]
\item For every $k \leq N$, $$\dfrac{1}{2^{2\ell(a^+_k)}} < d({a^+_k}^{\prime}, a^+_{k+1}) \leq \dfrac{1}{2^{\ell(a^+_N) + n}}.$$ In this case, by Lemma \ref{splemma7} for every $k\geq N$, $s_{a^+_k} = \ell(a^+_i) + n$, where $i$ satisfies that for every $j \leq $, $a^+_j$ does not contain $0^n$. 
\item For every $k > N$, $$\dfrac{1}{2^{\ell(a^+_N) + n}} \leq d({a^+_N}^{\prime \prime \prime}, a^+_k) \leq \dfrac{1}{2^{\ell(a^+_k)}}.$$ By Lemma \ref{splemma6}, $$\ell(a^+_N) \leq s_{a^+_k} \leq \ell(a^+_k) + 3n$$ for every $k > n$. 
\end{enumerate}
This implies that $$\mathop{\lim}\limits_{r \to \infty} m_r < \ell(a^+_N) + 3n.$$ Therefore, $\Sigma_a$ has specification. 
\end{proof}

An immediate consequence of Theorem \ref{specification4} is that for any $a \in \mathcal{E}$ such that $\Sigma_a$ does not have specification, the block $0^n$ occurs in $a$ infinitely many times. However, it will be shown that this condition is not sufficient.

\newtheorem{specification5}[subsection]{Theorem}
\begin{specification5}
Let $n \geq 2$ and $a \in \mathcal{E} \cap (1^n, 1^{n+1})$. If for every $r \in \mathbb{N}$, the canonical approximation from above $a^+_r$, satisfies that $$\dfrac{1}{2^{2\ell(a^+_r)}} < d({a^+_{r-1}}^{\prime}, a^+r) \leq \dfrac{1}{2^{\ell(a^+_r) + n}},$$ then $0^n$ occurs in $a$ infinitely many times and $(\Sigma_a, \sigma_a)$ has specification. \label{specification5}
\end{specification5}
\begin{proof}
Let $n \geq 2$ and $a \in \mathcal{E} \cap (1^n, 1^{n+1})$. Let $N$ such that for any $N \leq r$, $a^+_r$ does not contain $0^n$ and for any $r^{\prime} \leq N$, $a^+_{r^{\prime}}$ contains $0^n$. By Lemma \ref{splemma7} $s_{a^+_r} = \ell(a^+_{N}) + n$. Therefore $\mathop{\lim}\limits_{r \to \infty}m_r = \ell(a^+_r) + n$.
\end{proof}

Summing up Theorems \ref{specification1}, \ref{specification2}, \ref{specification3}, \ref{specification4}, \ref{specification5} and \ref{specification6} we obtain the proof of Theorem \ref{specification}. 

\vspace{1em}Observe that for the proposed families the sequence $\{r_i\}_{i=1}^{\infty} \subset \mathbb{N}$ stated in Theorem \ref{specification6} satisfies that $r_i = i$ for every $i$. Let $$S(\mathcal{E}) = \{a \in \mathcal{E}\mid \Sigma_a \hbox{\rm{ is transitive and has specification}}\},$$ and $NS(\mathcal{E}) = \mathcal{E} \setminus S(\mathcal{E}).$ We conjecture that $$\dim_{H} \mathcal{E} = \dim_{H}S(\mathcal{E}) = \dim_{H}NS(\mathcal{E}) = 1.$$ 

\subsubsection*{Non-Specification and The Thue-Morse Family}

The following Theorem will show that if the length of the defining words of the elements of the canonical approximation from above increases exponentially, then the limit sub-shift will not have specification.

\newtheorem{specification6}[subsection]{Theorem}
\begin{specification6}
Let $n \geq 2$ fixed. Let $a \in \mathcal{E}$ such that $a \in (1^n, 1^{n+1})$, $a \succ 11(01)^{\infty}$, $0^n$ occurs in $a$ infinitely many times and let $a_r^+$ be the approximation given by Theorem \ref{aproximation} $(2)$. If there exists an increasing sequence $\{r_i\}_{i=1}^{\infty} \subset \mathbb{N}$ and $R \in \mathbb{N}$ such that for every $r_i \geq R$ $a_{r_i}^+$ satisfies $$\ell(a_{r_{i-1}}^+ (\overline{{a_{r_{i-1}}^+}_1} \ldots \overline{{a_{r_{i-1}}^+}_{\ell(a_{r_{i-1}}^+)-1}}1)^{k_{r_i}}) \leq \ell(a_{r_i}^+)$$ and $$\dfrac{1}{2^{(k_{r_i}+1)\ell(a_{r_i}^+) + n}} \leq d({a_{r_i}^+}^{\prime \prime \prime}, a) \leq \dfrac{1}{2^{(k_{r_i}+1) \ell(a_{r_i}^+)}}$$ for some $k_{r_i} \geq 1$, then $(\Sigma_a, \sigma_a)$ does not have specification. \label{specification6}
\end{specification6}
\begin{proof}
Let $a \in  \mathcal{E}$ such that $a$ satisfies the hypothesis of the statement. Let $\{r_i\}_{i=1}^{\infty} \subset \mathbb{N}$ such that for every $r_i \geq R$, $a_{r_i}^+$ satisfies that $$\ell(a_{r_{i-1}}^+ (\overline{{a_{r_{i-1}}^+}_1} \ldots \overline{{a_{r_{i-1}}^+}_{\ell(a_{r_{i-1}}^+)-1}}1)^{k_{r_i}}) \leq \ell(a_{r_i}^+)$$ for some $k_{r_{i-1}} \geq 1$. By Proposition \ref{spremark1}, there exist $r_1$ such that $a^+_{r_1}$ does not contain $0^n$ and for every $r_i \geq r_1$  $$\dfrac{1}{2^{(k_1+1)\ell(a^+_{r_1}) + n}} \leq d({a^+_{r_1}}^{\prime \prime \prime}, a_{r_i}) \leq \dfrac{1}{2^{(k_1+1) \ell(a^+_{r_1})}}$$ for some $k_1 \in \mathbb{N}$. Then, by Lemma \ref{splemma5}, $k_1\ell(a^+_{r_1}) \leq s_{a^+_{r_i}}$ for every $i \geq 2$. Let $r_1 < r_2$, then by hypothesis $$\ell(a_{r_{1}}^+ (\overline{{a_{r_{1}}^+}_1} \ldots \overline{{a_{r_{1}}^+}_{\ell(a_{r_{1}}^+)-1}}1)^{k_{r_2}}) \leq \ell(a_{r_2}^+)$$ for some $k_{r_2} \geq 1$, and $\dfrac{1}{2^{(k_2+1)\ell(a^+_{r_1}) + n}} \leq d(a_{r_1},a_{r_2}) \leq \dfrac{1}{2^{(k_2+1)\ell(a^+_{r_1})}}$. Therefore, by Lemma \ref{splemma6} and Theorem \ref{transitivity} $$k_2k_1(s_{a^+_{r_1}}) \leq k_2k_1\ell(a^+_{r_1}) \leq k_2(s_{a^+_{r_1}}) \leq k_2(\ell(a^+_{r_1})) \leq s_{a^+_{r_2}}.$$ Inductively, $$(\mathop{\prod}\limits_{i=1}^{l}k_i)(s_{a^+_{r_1}}) \leq (\mathop{\prod}\limits_{i=1}^{l}k_i)\ell(a^+_{r_1}) \leq s_{a^+_{r_{l+1}}},$$ which implies that $\mathop{\lim}\limits_{r \to \infty} m_r$ is not bounded. Therefore $(\Sigma_a, \sigma_a)$ does not have specification.
\end{proof}

To illustrate Theorem \ref{specification6} a specific family of symmetric sub-shifts without specification will be constructed. Let $\omega$ be an irreducible word and $\omega \in (1^n, 1^{n+1})$ for $n \geq 2$ such that $0^n$ does not occur in $\omega$. Let $A$ be the block $[0^{n-1}1]$. Define $\omega^0 = \omega$, $\omega^1 = {\omega^0}^{\prime}A$ and for every $r \in \mathbb{N}$, $\omega^r = {\omega^{r-1}}^{\prime}A$. Note that by Proposition \ref{spremark1}, $\omega^r$ is an irreducible word for every $r \geq 0$. Then by Theorem \ref{exceptional1}, $\omega^* \in \mathcal{E}$. Let $\omega^* = \mathop{\lim}\limits_{r\to \infty} \omega^r$. Note that for any $r$ the last $n$ digits of $\omega^r$ are not $n$ consecutive ones. Therefore, the sequence $a^+_k = \omega^k 1$ approximates $\omega^*$ from above.  Note that the construction assures that $0^n$ occurs infinitely many times in $\omega^*$. Note also that for every $r \in \mathbb{N}$ $a_r^+$ the conditions of Theorem \ref{specification6} are satisfied. Therefore  $\Sigma_{\omega^*}$ does not have specification. 

\vspace{1em}We call this family \textit{the Thue-Morse non-specification family} and it is denoted by $T(\mathcal{E})$. Note that the growth of the specification number $s_{a^+_r}$ is exponential, that is $2^r(s_{\omega}) \leq s_{a^+_r}$ for every $r \in \mathbb{N}$. Observe that this can be done for every irreducible word $\omega$ and it can be modified considering $A_j = 0^j1$ with $j \in \{0, \ldots, n-1\}$, and considering the sequence 
\begin{itemize}
 \item[] $\omega^0 = \omega$;
 \item[] $\omega^1= \omega (\bar \omega_1 \ldots \bar \omega_{\ell(\omega)-1}1)k_1(A_j)_1$;
 \item[] and for any $r \geq 2$, $\omega^r = \omega_{r-1} (\bar \omega_1 \ldots \bar \omega_{\ell(\omega^r)-1}1)^{k_r}(A_j)_r$.
\end{itemize}

\section{Ergodic Properties of Symmetric Subshifts}
\label{resultsergodicity}

\noindent Given a shift space $(\Sigma, \sigma)$ we denote by $\mathbb{P}(\Sigma)$ the space of all probability measures $\mu$ defined on the Borel $\sigma$-algebra. We denote by $h_{\mu}(\sigma)$ \textit{the measure theoretical entropy of $\sigma$} or \textit{entropy of $\mu$}. The variational principle \cite[Theorem 8.6]{walters} states that $$h_{top}(\sigma) = \sup \{h_{\mu}(\sigma) \mid \mu \in \mathbb{P}(\Sigma)\}.$$ An invariant measure $\mu$ is called a \textit{measure of maximal entropy} if $h_{\mu}(\sigma) = h_{top}(\Sigma)$. We say that a shift transformation $\sigma$ is \textit{intrinsically ergodic} if $\sigma$ admits a unique measure of maximal entropy. Note that the expansivity of shift maps imply that such measures always exist \cite[Theorem 8.2, Theorem 8.7]{walters}. Furthermore, \cite[Proposition 7.7]{mane} states that if $(\Sigma, \sigma)$ is intrinsically ergodic then the unique measure of maximal entropy is ergodic. In our context, we have an explicit expression for such a measure, given by modifying the proof of \cite[Theorem 8.2, part \textit{ii}]{walters}. Given a symmetric sub-shift $\Sigma_a$, consider $$Per(n) = \{x \in \Sigma_a \mid \sigma^k(x) = x \hbox{\rm{ for some }} 1 \leq k \leq n \}.$$ Consider the sequence of measures given by $$\mu_n = \dfrac{1}{n} \mathop{\sum}\limits_{j=0}^{n-1} (\sigma^j)^*(\nu_n),$$ where $$\nu_n = \dfrac{1}{|Per(n)|}\mathop{\sum}\limits_{x \in Per(n)} \delta_x,$$ and $\mu$ is the weak star limit of a subsequence $\mu_{n_j}$. The main theorem of this section is showing that almost every symmetric sub-shift is intrinsically ergodic.

\newtheorem{almostevery}[subsection]{Theorem}
\begin{almostevery}
$(\Sigma_a, \sigma_a)$ is intrinsically ergodic for almost every $a \in (a^{*}, \frac{1}{2})$.\label{almostevery}
\end{almostevery}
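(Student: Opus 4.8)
The plan is to split the parameter interval into a Lebesgue-null ``bad'' set, on which $\Sigma_a$ may fail to be of finite type, and a full-measure ``good'' set on which $\Sigma_a$ is a sub-shift of finite type whose unique transitive component of maximal entropy is itself a transitive SFT; on the good set intrinsic ergodicity will follow from the classical Perron--Frobenius theory of sub-shifts of finite type together with Parry's theorem \cite{parry}. First I would pin down the exceptional set: a parameter $a$ is problematic only when its defining sequence $\pi^{-1}(2a)$ is a Parry symmetric sequence, i.e. lies in $P$. By \cite[Theorems 3.7, 3.8]{nilsson} the set $\pi(P)$ has Lebesgue measure zero, so the set of $a \in (a^*,\tfrac12)$ with $\pi^{-1}(2a) \in P$ is Lebesgue-null; since $\mathcal{E} \subset P$, this null set also contains every parameter for which $\Sigma_a$ fails to be a sub-shift of finite type. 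Hence it suffices to establish intrinsic ergodicity for every $a$ with $\pi^{-1}(2a) \in N$.

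Next, fixing such an $a$, by \cite[Proposition 4.1]{bundfuss} (equivalently, Lemma \ref{lemabeta} together with Proposition \ref{finitetype}, since $\varsigma(a)$ is pre-periodic) the sub-shift $\Sigma_a$ is of finite type. Because $a \succ a^*$, Theorem \ref{component} together with the remark following it---which extends the conclusion to every $\omega \succ 11(01)^{\infty}$ via Lemma \ref{irreducibles2}---produces a unique transitive component $A \subset \Sigma_a$ with $h_{top}(A) = h_{top}(\Sigma_a)$, and by Theorem \ref{transitivity} this main component is a transitive sub-shift of finite type. Moreover, Lemma \ref{controlentropy} guarantees the crucial strict entropy gap: every other transitive component of $\Sigma_a$ has entropy strictly below $h_{top}(\Sigma_a)$.

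The last step is to invoke the classical structure theory. Decomposing the SFT $\Sigma_a$ into its irreducible components, the Perron--Frobenius theorem shows that any measure of maximal entropy must be carried by an irreducible component attaining the maximal entropy, while Parry's theorem \cite{parry} (or equivalently Bowen's specification criterion \cite{bowen1}, applicable since a transitive SFT has specification) shows that each such transitive component carries exactly one measure of maximal entropy. As $A$ is the only maximal-entropy transitive component, the Parry measure on $A$, extended by zero, is the unique measure of maximal entropy of $\Sigma_a$; thus $(\Sigma_a,\sigma_a)$ is intrinsically ergodic. Combining this with the Lebesgue-null exceptional set yields intrinsic ergodicity for almost every $a \in (a^*,\tfrac12)$.

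The hard part will be securing the structural input behind Theorem \ref{component}: namely, that the distinguished transitive component $A$ is genuinely a transitive sub-shift of finite type (so Parry applies verbatim) and, above all, that no maximal-entropy invariant measure can leak onto the remaining components. This amounts to reconciling the ``transitive component'' language of \cite{bundfuss} with the graph-theoretic irreducible-component decomposition, and then using the strict entropy inequality of Lemma \ref{controlentropy} to exclude any further maximal measure. Once this is in place, the measure-theoretic conclusion is routine, and the explicit periodic-point construction of $\mu$ recorded before the statement identifies the weak-star limit with the Parry measure on $A$.
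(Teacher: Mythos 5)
Your proposal is correct, but it takes a genuinely different route from the paper. The paper assembles Theorem \ref{almostevery} from three pieces: Theorems \ref{iesft} and \ref{ieplateaus1} handle every parameter lying in an entropy plateau $[\omega,\omega^{\prime\prime\prime}]$ (including plateau parameters whose defining sequence is a Parry sequence), Theorem \ref{specification} together with Bowen's criterion \cite{bowen1} handles the part $S(\mathcal{E})$ of the exceptional set where specification holds, and the discarded bad set is only $NS(\mathcal{E})\subset\mathcal{E}\subset P$, which is null. You instead discard the whole of $P$ at the outset---legitimate, since $\pi(P)$ is null by \cite{nilsson}---and then only ever deal with parameters $a\in N$, for which $\Sigma_a$ is a sub-shift of finite type; there the plateau structure (Theorem \ref{component}, Lemma \ref{controlentropy}) plus Parry's theorem \cite{parry} give uniqueness. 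This buys a substantially shorter proof of the almost-everywhere statement: none of the specification machinery of Section \ref{specificationsection} is needed. What it loses is coverage: the paper's argument establishes intrinsic ergodicity on a strictly larger set of parameters (all plateau parameters, even Parry ones such as $\omega^{\prime\prime}$ and $\omega^{\prime\prime\prime}$, plus $S(\mathcal{E})$), which is the content of Theorems \ref{ieplateaus1} and \ref{especification} beyond the a.e.\ claim. Two small repairs to your write-up: (i) the parenthetical claim that Lemma \ref{lemabeta} and Proposition \ref{finitetype} yield finite type is inaccurate, since $\varsigma(a)$ is periodic rather than finite, so that pair of results only gives soficity via Proposition \ref{sofic}; the finite-type assertion rests on \cite[Proposition 4.1]{bundfuss}, exactly as in the paper. (ii) Theorem \ref{component} applies to your $a$ only because $a$ lies in some plateau $[\omega,\omega^{\prime\prime\prime}]$; this is where $\mathcal{E}\subset P$ is really used ($a\in N$ forces $a\notin\mathcal{E}$, hence $a$ belongs to a plateau by Lemma \ref{irreducibles2} and Theorem \ref{exceptional1}), not merely to control the parameters for which $\Sigma_a$ fails to be of finite type, as your first paragraph suggests.
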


In \cite{parry} Parry showed that transitive sub-shifts of finite type are intrinsically ergodic. Moreover, Weiss \cite{weiss1, weiss2} showed that every transitive sofic shift is intrinsically ergodic and Bowen \cite[p. 195]{bowen1} shown the same result for sub-shifts with the specification property.  As a consequence of Theorems \ref{transitivity} and \ref{sofic} we obtain the following theorem.

\newtheorem{iesft}[subsection]{Theorem}
\begin{iesft}
Let $a \succ 11(01)^{\infty}$. Then
\begin{enumerate}
 \item If $\Sigma_a$ is given by an irreducible word, then $\Sigma_a$ is intrinsically ergodic.
 \item If $\Sigma_a$ is given by a pre-periodic sequence and $a$ and $\Sigma_a$ is transitive then $\Sigma_a$ is intrinsically ergodic
\end{enumerate}
\label{iesft}
\end{iesft}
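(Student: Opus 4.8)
The plan is to deduce both statements directly from the structural results already established, combined with the classical intrinsic--ergodicity theorems of Parry \cite{parry} and Weiss \cite{weiss1, weiss2} recalled just before the statement. The hypothesis $a \succ 11(01)^{\infty}$ serves only to guarantee $h_{top}(\Sigma_a) > 0$ (via Lemma \ref{powersoftwo} together with monotonicity of the entropy function), so that the measure of maximal entropy is genuinely meaningful; the arguments below do not otherwise use it.

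For part (1), suppose $\Sigma_a = \Sigma_{\omega}$ for an irreducible word $\omega$. First I would invoke Theorem \ref{transitivity}, which tells us that $(\Sigma_{\omega}, \sigma_{\omega})$ is a transitive sub-shift of finite type. Parry's theorem \cite{parry} asserts that every transitive sub-shift of finite type admits a unique measure of maximal entropy; applying it yields intrinsic ergodicity of $\Sigma_a$ at once.

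For part (2), suppose $a$ is pre-periodic and $\Sigma_a$ is transitive. Here I would first apply Proposition \ref{sofic}, which guarantees that a pre-periodic defining sequence yields a sofic shift. Thus $(\Sigma_a, \sigma_a)$ is a transitive sofic shift, and Weiss's theorem \cite{weiss1, weiss2}---that every transitive sofic shift is intrinsically ergodic---finishes the proof.

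I expect no serious obstacle: each item is essentially a one--line corollary obtained by feeding the appropriate earlier statement (Theorem \ref{transitivity} or Proposition \ref{sofic}) into the matching classical result. The only point requiring care is that the transitivity hypothesis is explicit in (2) but automatic in (1): irreducibility already forces transitivity through Theorem \ref{transitivity}, whereas a pre-periodic defining sequence need not produce a transitive shift, so transitivity must be assumed separately before Weiss's theorem is applicable.
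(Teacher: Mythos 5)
Your proof is correct and takes essentially the same route as the paper, which derives this theorem in exactly this way: Theorem \ref{transitivity} combined with Parry's result on transitive sub-shifts of finite type gives part (1), and Proposition \ref{sofic} combined with Weiss's theorem on transitive sofic shifts gives part (2). Your closing remark---that transitivity is automatic in (1) via Theorem \ref{transitivity} but must be assumed separately in (2)---also matches the way the paper states the hypotheses.
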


Furthermore, the families of sub-shifts described in Theorems \ref{specification1}, \ref{specification2}, \ref{specification3} are intrinsically ergodic.

\newtheorem{especificacion}[subsection]{Theorem}
\begin{especificacion}
Let $a \in \mathcal{E} \cap [11(01)^{\infty}, 1^{\infty}]$. Then:
\begin{enumerate}
\item If $0^n$ does not occur in $a$ then $(\Sigma_a, \sigma_a)$ is intrinsically ergodic;
\item If $0^n$ occurs in $a$ finite times then $(\Sigma_a, \sigma_a)$ is intrinsically ergodic;
\item If $0^n$ occurs in $a$ infinitely many times then $(\Sigma_a, \sigma_a)$ is intrinsically ergodic if $a_k^+$ satisfies Lemma \ref{splemma7}.
\end{enumerate}
\label{especification}
\end{especificacion}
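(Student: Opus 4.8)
The plan is to obtain intrinsic ergodicity as a formal consequence of the specification property, the substantive work having already been carried out in Section~\ref{specificationsection}. The bridge between the two notions is Bowen's classical theorem, which guarantees that a system enjoying the specification property carries a \emph{unique} measure of maximal entropy. Thus the whole argument reduces to matching the hypotheses of the present statement against those under which specification was established, and then quoting Bowen.

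First I would record that the three cases in the statement are literally the three cases of Theorem~\ref{specification}. Hence, for $a\in\mathcal{E}\cap[11(01)^{\infty},1^{\infty}]$ with $a\in(1^n,1^{n+1})$, Theorem~\ref{specification} (equivalently Theorems~\ref{specification1}, \ref{specification4} and~\ref{specification5} for the three respective situations) shows that $(\Sigma_a,\sigma_a)$ has the specification property: when $0^n$ is absent, when $0^n$ occurs finitely often, and when $0^n$ occurs infinitely often under the hypothesis that the canonical approximants $a_k^+$ satisfy Lemma~\ref{splemma7}. Next I would recall that every sub-shift is expansive, so a measure of maximal entropy exists by \cite[Theorem~8.2]{walters}, and then invoke Bowen's result \cite[p.~195]{bowen1}: a system with the specification property admits exactly one such measure. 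Combining these two facts yields that $\sigma_a$ is intrinsically ergodic in all three cases, which is the assertion.

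The argument is therefore short, and I do not expect any genuine obstacle beyond bookkeeping: the entire combinatorial difficulty—producing a uniform bound $\lim_{n\to\infty}m_n<\infty$ on the length of bridging words—was resolved in the specification theorems, and here one merely translates that uniform bound into the ergodic conclusion through Bowen's criterion. The only point requiring a moment's care is the compatibility of hypotheses at the low end $n=2$ and at the endpoint $a=1^{\infty}$ (the full shift), where the relevant specification statement, some of which are phrased for $n\geq 3$, must still be verified to apply; in both cases $(\Sigma_a,\sigma_a)$ has specification and the conclusion is unaffected. If desired, one can additionally note, via \cite[Proposition~7.7]{mane}, that the unique measure so obtained is ergodic, and that it is realised as the weak-star limit of the periodic-orbit measures $\mu_n$ constructed above.
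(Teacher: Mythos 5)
Your proposal is correct and is essentially the paper's own (implicit) argument: the paper offers no separate proof of this theorem, presenting it as a direct consequence of the specification results of Section~\ref{specificationsection} (Theorem~\ref{specification}, i.e.\ Theorems~\ref{specification1}--\ref{specification5}) combined with Bowen's criterion \cite[p. 195]{bowen1} that an expansive system with the specification property has a unique measure of maximal entropy, exactly as you argue. Your remaining bookkeeping points (the $n=2$ and $a=1^{\infty}$ edge cases, and the paper's $a_k^{+}$ versus $a_k^{-}$ discrepancy in case (3), both referring to the canonical approximants of Theorem~\ref{aproximation}) are consistent with the paper's intent and do not change the argument.
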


It is not clear if it is possible to state under which conditions a general non-transitive sub-shift of finite type can be intrinsically ergodic. Actually, intrinsic ergodicity is not a consequence given by the transitivity of a sub-shift (see, e.g \cite{gurevich}, \cite{haydn}, \cite{petersen}). Nonetheless, it is conjectured that is possible to show that every symmetric sub-shift in two symbols is intrinsically ergodic. 

\newtheorem{ie1}[subsection]{Theorem}
\begin{ie1}
If $a \in \Sigma_2$ satisfies $a^* \prec a \prec 11(01)^{\infty}$ and $\Sigma_a$ is a shift of finite type, then $\Sigma_a$ is intrinsically ergodic. \label{ie1}
\end{ie1}
\begin{proof}
Let $A$ be the main component of $\Sigma_a$ given by Theorem \ref{component}, and $X = \Sigma_a \setminus A$. Let $\left[w\right]$ be a cylinder such that it is totally contained in $X$. Then $$\nu_n(\left[w\right]) = \dfrac{1}{|Per(n)|}\mathop{\sum}\limits_{x \in Per(n)} \delta_x = \dfrac{|Per(n) \cap \left[w\right]|}{|Per(n)|}.$$ Note that $\nu_n(\left[w\right]) \geq 0$ if and only if $w = u$ or $w = \bar u$, where $u = (01)^k$ for $k \geq \ell(\omega)$. In this case $$\nu_n(\left[w\right]) \leq \dfrac{2}{|Per(n)|}.$$ Observe that $\sigma^{-j}(\left[w \right])$ is a block of $(01)^{\infty}$, in this case $$\mu_n(\left[w\right]) = \dfrac{2}{|Per(n)|},$$ which tends to zero when $n$ tends to $\infty$ because $h_{top}(\Sigma_a) > 0$. This shows that the support of $\mu$ is $\Sigma_a \setminus X$.

\vspace{1em} Suppose that $\Sigma_a$ is not intrinsically ergodic. Then there exist $m$ such that $h_{\mu} = h_{m}$ and $\mu \neq m$. In \cite{bowen2, bowen1} it was shown that $\mu$ is ergodic. Then the support of $m$ has to be contained in $X$. Because the support of $m$ is a $\sigma$ invariant subset, it has to be $\{(01)^{\infty}, (10)^{\infty}\}$, then $h_m = 0$, which is a contradiction.
\end{proof}

This argument can be adapted to show the intrinsic ergodicity of any symmetric sub-shift given by a parameter $a$ lying on an entropy plateau, irrespective of whether $a \prec 11(01)^{\infty}$ or $a \succ 11(01)^{\infty}$. 

\newtheorem{ieplateaus1}[subsection]{Theorem}
\begin{ieplateaus1}
If $a$ is a parameter such that $a$ belongs to an entropy plateau $\left[\omega, \omega^{\prime \prime \prime}\right]$, then $(\Sigma_a, \sigma_a)$ is intrinsically ergodic.\label{ieplateaus1}
\end{ieplateaus1}
\begin{proof}
Suppose first that $a \succ 11(01)^{\infty}$ and $a$ belongs to an entropy plateau $[\omega, \omega^{\prime \prime \prime}]$. Then $\omega$ is an irreducible word, therefore, $(\Sigma_{\omega}, \sigma_{\omega})$ is a transitive sub-shift of finite type. By Theorem \ref{iesft}, $(\Sigma_{\omega}, \sigma_{\omega})$ is intrinsically ergodic. Let $\mu_{\omega}$ be the unique measure of maximal entropy of $(\Sigma_{\omega}, \sigma_{\omega})$. Note that we can extend $\mu_{\omega}$ to a measure $\mu_a$ on $\Sigma_a$ in the following way: $$\mu_a(B) = \mu_{\omega}(B \cap \Sigma_{\omega}).$$ Clearly, $\mu_a$ is an ergodic measure of maximal entropy for $\Sigma_a$, satisfying $$\hbox{\rm{supp}}(\mu_a) = \hbox{\rm{supp}}(\mu_{\omega}).$$ Suppose that there exist another measure $\nu_a$ of maximal entropy for $(\Sigma_a, \sigma_{a})$. By \cite[Theorem 8.7]{walters} we can consider $\nu_a$ as an ergodic measure for $\Sigma_a$. By \cite[Theorem 6.10]{walters} they are mutually singular. Let $A$ be the set such that $\mu_a(A) = 1 = \nu_a(\Sigma_a \setminus A)$. Recall that $$\mu_a(\hbox{\rm{supp}}(\mu_a) \cap \hbox{\rm{supp}}(\nu_a)) = \nu(\hbox{\rm{supp}}(\mu_a) \cap \hbox{\rm{supp}}(\nu_a)) = 0.$$ Note that for any measurable set $B \subset A$, $\nu_a(B)\log(\nu_a(B)) = 0$. Then $h_{\nu_a} = h_{top}(\sigma_a\mid_{\Sigma_a \setminus A})$. Note that $\Sigma_a \setminus A = \Sigma_a \setminus \Sigma_{\omega} \mod 0$. Then $h_{\nu_a} = h_{top}(\Sigma_a \setminus \Sigma_{\omega})$. Therefore, by Lemma \ref{controlentropy} $$h_{\nu_a} = h_{top}(\Sigma_a \setminus \Sigma_{\omega}) \leq \frac{1}{\ell(\omega)},$$ and by By Proposition \ref{powersoftwo} and Lemma \ref{entropyintervals} $$h_{\nu_a} < h_{\mu_a},$$  which contradicts that $\nu_a$ is a measure of maximal entropy. Therefore $(\Sigma_a, \sigma_a)$ is intrinsically ergodic.

Suppose now that $a \prec 11(01)^{\infty}$. Let $\omega$ be the left endpoint of the entropy plateau which $a$ belongs to. By Theorem \ref{ie1}, $\Sigma_{\omega}$ is intrinsically ergodic. Let $\mu_{\omega}$ be the unique measure of maximal entropy of $(\Sigma_{\omega}, \sigma_{\omega})$. Note that $\mu_{\omega}$ is supported in the main component of $(\Sigma_{\omega}, \sigma_{\omega})$, denoted by $A$. Let $$\mu_a(B) = \mu_{\omega}(B \cap A).$$ Using the same argument as in the transitive case, we can conclude that $\mu_a$ is the unique measure of maximal entropy for $(\Sigma_a, \sigma_a)$.
\end{proof}

As a consequence of Theorems \ref{iesft}, \ref{specification}, \ref{ieplateaus1} and the fact that $NS(\mathcal{E}) \subset \mathcal{E}$ Theorem \ref{almostevery} follows. Note that it is possible to construct examples of sub-shifts with two transitive components with the same topological entropy (see \cite{haydn}), then the argument shown in the proof of Theorem \ref{ieplateaus1} will not hold. 

\section{Final Remarks}
\label{despues}

\subsection*{Specification}
\noindent As was stated in Section \ref{specificationsection}, if a parameter $a \succ 11(01)^{\infty}$ is in the exceptional set $\mathcal{E}$ and $a \geq 11(01)^{\infty}$ then $\Sigma_a$ is a coded system. In \cite[Theorem B]{climenhaga}, sufficient conditions for a coded system to be intrinsically ergodic are given. Nonetheless, constructing the codes proposed \cite{climenhaga} is complicated. In \cite[p 571]{gurevich}, Gurevich states a general criterion to show intrinsic ergodicity for shifts that can be approximated from above. Let $a \in \mathcal{E}$ with $11(01)^{\infty} \prec a$. Consider $a^+_n$, given by Theorem \ref{aproximation}. Let $\rho_n = h_{top}(\Sigma_{a^+_n}) - h_{top}(\Sigma_a)$. Observe that $\rho_n \to 0$ when $n \to \infty$. Let $s_{a^+_n}$ be the specification number of $\Sigma_{a^+_n}$. Let $$R = -\mathop{\limsup}\limits_{n \to \infty} \dfrac{\log (\rho_n)}{n}$$ and $$S = \mathop{\limsup}\limits_{n \to \infty}\dfrac{s_{{a_n}^+}}{n}.$$ The criterion is summarised in the following Theorem.
 
\newtheorem{gurevichc}[subsection]{Theorem \cite[Theorem 1.1]{gurevich}}
\begin{gurevichc}
Let $a \in \mathcal{E}$ such that $\Sigma_a$ is transitive. If $$\dfrac{R}{(16+\gamma) h_{top}(\Sigma_a)} \geq S$$ for some $\gamma > 0$ then $\Sigma_a$ is intrinsically ergodic.\label{gurevichc}
\end{gurevichc}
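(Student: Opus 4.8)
Since the statement is a quantitative uniqueness criterion for a system presented as a decreasing intersection of transitive SFTs, the plan is to reduce everything to the approximants $\Sigma_{a_n^+}$ of Theorem \ref{aproximation}(2) and to convert the two rates $R,S$ into a head-on comparison. First I would observe that, for $a \in \mathcal{E}$ with $a \succ 11(01)^{\infty}$, the canonical approximation from above can be realized by irreducible words, so each $\Sigma_{a_n^+}$ is by Theorem \ref{transitivity} a transitive sub-shift of finite type; by \cite{parry} it has a unique measure of maximal entropy $\mu_n$ and the specification property with gap $s_{a_n^+}$, and I write $h = h_{top}(\Sigma_a)$ and $h_n = h + \rho_n$. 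The goal becomes to rule out two distinct ergodic measures of maximal entropy on $\Sigma_a = \bigcap_n \Sigma_{a_n^+}$ under the rate hypothesis.

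I would proceed by contradiction, assuming two distinct ergodic MMEs $\mu_1,\mu_2$, which are mutually singular by \cite[Theorem 6.10]{walters}. Typicality then furnishes, for each large block length $N$, two disjoint families $\mathcal{W}^1_N, \mathcal{W}^2_N \subset B_N(\Sigma_a)$ of words of near-maximal exponential size that are respectively $\mu_1$- and $\mu_2$-generic. The engine of the argument is that the specification of $\Sigma_{a_n^+}$ lets me glue an arbitrary sequence of such blocks, with bridges of length at most $s_{a_n^+}$ between consecutive blocks, into admissible words of $\Sigma_{a_n^+}$; since each slot may independently be filled from either family, the construction produces strictly more words of a given length than the entropy $h$ of $\Sigma_a$ alone would allow. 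Quantifying this excess yields a lower bound on the surplus $\rho_n$: the extra switching freedom, smeared over the specification scale $s_{a_n^+}$, forces $\Sigma_{a_n^+}$ to carry appreciably more entropy than $\Sigma_a$, with the size of the forced surplus decreasing as the gap $s_{a_n^+}$ grows.

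Carrying the bookkeeping through at the level of exponential rates is where the explicit constant $16$ enters: after optimizing the block length $N$ against the gap $s_{a_n^+}$ and the typicality deficit, the lower bound on $\rho_n$ translates into the inequality $R \le 16\,h\,S$ for the rates $R = -\limsup_n \tfrac1n \log \rho_n$ and $S = \limsup_n \tfrac1n s_{a_n^+}$, directly contradicting the hypothesis $R \ge (16+\gamma) h\,S$ with $\gamma > 0$. The hard part is precisely this sharp accounting: because $\Sigma_a$ itself need not have specification --- the family $T(\mathcal{E})$ built in Section \ref{specificationsection} shows the failure genuinely occurs --- every gluing must be executed inside the larger SFTs $\Sigma_{a_n^+}$ while simultaneously controlling both the entropy defect $\rho_n$ and the growth of $s_{a_n^+}$, and one must track exactly how many positions each bridge and each distinguishing choice costs per unit length in order to extract the constant $16$ rather than some cruder bound.

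An alternative route, which I would keep in reserve, is to argue stability of the Parry measures $\mu_n$ directly: a sufficiently fast decay of $\rho_n$ relative to the gap growth makes $\{\mu_n\}$ weak-$*$ convergent to a single limit that must coincide with every MME of $\Sigma_a$. Transitivity of $\Sigma_a$ is used in either route to ensure a single recurrent component, so that the overcount reflects genuine non-uniqueness rather than a splitting into equal-entropy pieces (cf. \cite{haydn}). Either way, the delicate point remains the simultaneous quantitative control of the two competing rates, which is exactly what the quoted bound $R/((16+\gamma)h_{top}(\Sigma_a)) \ge S$ is calibrated to provide.
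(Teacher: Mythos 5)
First, a point of comparison that matters for your whole exercise: the paper does not prove this statement at all. It is stated as a quotation of Gurevich's criterion (the theorem environment is titled ``Theorem \cite[Theorem 1.1]{gurevich}''), translated into the notation of Section \ref{despues}; the surrounding text only defines $\rho_n$, $R$ and $S$ and records the criterion so it can be discussed as a possible tool. So your attempt must be judged on its own merits as a reconstruction of Gurevich's argument, and judged that way it has a genuine gap.

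The gap is in the central engine of your second paragraph. You claim that gluing blocks drawn from two disjoint, near-maximal families $\mathcal{W}^1_N,\mathcal{W}^2_N\subset B_N(\Sigma_a)$ inside $\Sigma_{a_n^+}$ ``produces strictly more words of a given length than the entropy $h$ of $\Sigma_a$ alone would allow'', and that quantifying this surplus yields a lower bound on $\rho_n$ and hence the inequality $R\le 16\,h\,S$. Count it: concatenating $m$ blocks, each chosen from $\mathcal{W}^1_N\cup\mathcal{W}^2_N$ (cardinality at most $2e^{Nh}$), with bridges of length at most $s_{a_n^+}$, produces at most $(2e^{Nh})^m$ admissible words of length roughly $m(N+s_{a_n^+})$, giving an exponential rate of at most $(Nh+\log 2)/(N+s_{a_n^+})$. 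This exceeds $h$ by at most $\log 2/(N+s_{a_n^+})$ --- one bit per block, which is all that the ``switching freedom'' between the two families can ever contribute --- and is in fact \emph{below} $h$ as soon as $h\,s_{a_n^+}>\log 2$. Since this excess vanishes as $N\to\infty$ irrespective of any relation between $R$ and $S$, no lower bound on $\rho_n$ and no contradiction can be extracted. This is not a fixable bookkeeping slip: word counts simply cannot detect non-uniqueness of the measure of maximal entropy, because a subshift with two mutually singular ergodic MMEs (e.g.\ the examples of \cite{haydn} cited in the paper) has exactly the same exponential word-growth as an intrinsically ergodic one. That is precisely why Bowen's uniqueness proof under specification runs through the Gibbs property of the constructed measure rather than through cardinalities of $B_n$, and why Gurevich's criterion must be proved by comparing measures: the Parry measures $\mu_n$ of $\Sigma_{a_n^+}$ satisfy Gibbs-type lower bounds whose constants degrade like $e^{-h_n s_{a_n^+}}$, and the hypothesis $R\ge(16+\gamma)h_{top}(\Sigma_a)S$ is exactly what guarantees that this degradation is beaten by the decay of the entropy defect $\rho_n$ when one tests a putative second MME on its Shannon--McMillan--Breiman typical cylinders. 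Your ``reserve'' route (weak-$*$ stability of the $\mu_n$) points in this correct direction, but in your write-up it is an unquantified remark, not an argument.
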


Therefore, to use this criterion, it is necessary to have an exponential order of approximation of the entropies of the elements of the sequence given by Theorem \ref{aproximation}. A technical issue to deal with this limit is calculating the topological entropy for a sub-shift given by an irreducible word. However, if $a \in (1^k, 1^{k+1})$ for a fixed $k \geq 2$ it is possible to substitute $h_{top}(\Sigma_a)$ by $h_{top}(\Sigma_{1^{k+1}}) = \log \varphi^{k}$ where $\varphi^k$ is the $k$th multinacci number, that is $\varphi^k$ is the unique root of $x^k = \mathop{\sum}\limits_{n=0}^{k-1} x^n$ which lies in $(1,2)$. Indeed, it suffices to show that $$\rho_n \leq (\varphi^k)^{-(16+\gamma)\ell(a^+_n)} \leq (\varphi^k)^{-(16+\gamma)s_{{a_n}^+}},$$ for infinitely many $n \in \mathbb{N}$. 

\vspace{1em}It is difficult to describe the transitive components for $a \in \mathcal{E} \cap \left[a^*, 11(01)\right)$. It is known from \cite[Proposition 7.1]{bundfuss} that the transitive components of $a$ are coded, but the uniqueness of the main component it is still not known and if the main component will have the specification property.

\subsection*{Unique $\beta$-expansions for $\beta > 2$}

Recently, in \cite[Theorem 4.4]{simon} Baker has proven a generalisation of Theorem \ref{teo1}, placing particular emphasis on the differences when the number of symbols of the symmetric sub-shift associated to $\beta$ is odd or even. Then it is natural to ask if the results obtained so far can be extended for $\beta > 2$.

\section*{Acknowledgements} 

\noindent The author would like to thank Nikita Sidorov for his extensive support and Simon Baker for helpful discussions.

\end{document}